\edef\restoreparindent{\parindent=\the\parindent\relax}
\newcounter{problem}
\newtheorem{prob}[problem]{Problem}
\newtheorem{conj}[problem]{Conjecture}
\newcounter{contSect} \numberwithin{contSect}{section}
 \numberwithin{contSub}{subsection}
\newtheorem{theorem}[contSect]{Theorem}
\newtheorem{corollary}[contSect]{Corollary}
\newtheorem{lemma}[contSect]{Lemma}
\newtheorem{claim}[contSect]{Claim}
\newtheorem{observation}[contSect]{Observation}
\title{Covering and packing with homothets of limited capacity}
\author{Oriol Solé Pi}
\date{Facultad de Ciencias, Universidad Nacional Autónoma de México, México. oriolandreu@ciencias.unam.mx}
\begin{document}

\maketitle

\section{Abstract}

This work revolves around the two following questions: Given a convex body $C\subset\mathbb{R}^d$, a positive integer $k$ and a finite set $S\subset\mathbb{R}^d$ (or a finite Borel measure $\mu$ on $\mathbb{R}^d$), how many homothets of $C$ are required to cover $S$ if no homothet is allowed to cover more than $k$ points of $S$ (or have measure larger than $k$)? How many homothets of $C$ can be packed if each of them must cover at least $k$ points of $S$ (or have measure at least $k$)? We prove that, so long as $S$ is not too degenerate, the answer to both questions is $\Theta_d(\frac{|S|}{k})$, where the hidden constant is independent of $d$. This is optimal up to a multiplicative constant. Analogous results hold in the case of measures. Then we introduce a generalization of the standard covering and packing densities of a convex body $C$ to Borel measure spaces in $\mathbb{R}^d$ and, using the aforementioned bounds, we show that they are bounded from above and below, respectively, by functions of $d$. As an intermediate result, we give a simple proof the existence of weak $\epsilon$-nets of size $O(\frac{1}{\epsilon})$ for the range space induced by all homothets of $C$. Following some recent work in discrete geometry, we investigate the case $d=k=2$ in greater detail. We also provide polynomial time algorithms for constructing a packing/covering exhibiting the $\Theta_d(\frac{|S|}{k})$ bound mentioned above in the case that $C$ is an Euclidean ball. Finally, it is shown that if $C$ is a square then it is NP-hard to decide whether $S$ can be covered using $\frac{|S|}{4}$ squares containing $4$ points each.

\newpage
\tableofcontents

\newpage
\section{Introduction}\label{chap:intro}

This work revolves around the two following natural questions: Given a convex body $C$, a finite set of points $S\subset\mathbb{R}^d$, and a positive integer $k$, how many homothets of $C$ are required in order to cover $S$ if each homothet is allowed to cover at most $k$ points? (covering question). How many homothets can be packed if each of them must cover at least $k$ points? (packing question). We shall denote these two quantities by $f(C,k,S)$ and $g(C,k,S)$, respectively. Analogous functions can be defined if, instead of $S$, we consider a finite Borel measure $\mu$ on $\mathbb{R}^d$. As far as we know, these questions have not been studied before in such generality.

Clearly, $f(C, k, S)\geq \frac{|S|}k$ and $g(C, k, S)\leq \frac{|S|}k$, and it is easy to construct, for any $C$ and $k$, arbitrarily large sets $S$ for which equality holds (take, for example, any set formed by some clusters which lie far away from each other and contain $k$ points each). Perhaps surprisingly, under some mild assumptions on $S$ (or $\mu$) $f$ and $g$ will also be bounded from above and below, respectively, by linear functions of $\frac{|S|}{k}$ (or $\frac{\mu(\mathbb{R}^d)}{k})$, that is, $f(C,k,S)=O_d(\frac{|S|}k)$ and $f(C,k,S)=\Omega_d(\frac{|S|}k)$, where the hidden constant depends only on $d$. For Euclidean balls, both of these bounds follow from the Besicovitch covering theorem, first shown by Besicovitch~\cite{besicovitch_1945} in the planar case and later extended to higher dimensions and more general objects by Morse~\cite{Morse_1947} and Bliedtner and Loeb~\cite{BliedtnerLoeb}, this is discussed in further detail in the following section. We give a proof of the desired bounds for $f$ and $g$ that does not rely on the Besicovitch covering theorem.

The standard packing and covering densities depend implicitly on the Lebesgue measure. We introduce a generalization of covering and packing densities to Borel measure spaces in $\mathbb{R}^d$. Then, using the aforementioned bounds on $f$ and $g$, we show that for every $C$ and every nice enough measure, these covering and packing densities are bounded from above and below, respectively, by two constants that depend only on $d$. When restricted to the Lebesgue measure, this is equivalent to the relatively simple fact, mentioned earlier, that the standard covering and packing densities are accordingly bounded by a function of $d$. 

For squares, disks and triangles in the plane, the case $k=2$ has received some attention in discrete geometry (\cite{delaunaytoughness,matchingsquares,matchingnp,trianglematchingsfirst,trianglematchings,strongmatchings}). Continuing this trend, we separately study the case $d=k=2$ for more general convex bodies.

We discuss algorithms for efficiently packing and covering with homothets that contain at least $k$ and at most $k$ points, respectively. Bereg et al. \cite{matchingnp} showed that, even for $k=2$, finding an optimal packing with such homothets of a square is NP-hard, we complement this result by showing that the covering problem is also NP-hard in the case of squares \cite{matchingnp}.

At some point in this work, we require some basic tools from the study of Delaunay triangulations and $\epsilon$-nets.

\section{Preliminaries}\label{chap:prelim}

\subsection{Basic notation and definitions}\label{sec:notation}
A set $C\subset\mathbb{R}^d$ is a \textit{convex body} if it is convex, compact and its interior is nonempty. Furthermore, if the boundary of a convex body contains no segment of positive length, then we say that it is a \textit{strictly convex body}. Given any set $C\subset\mathbb{R}^d$, an \textit{homothetic copy} of $C$ (or, briefly, an \textit{homothet} of $C$) is any set of the form $\lambda C+x=\lbrace \lambda c+x : c\in C\rbrace$ for some $x\in\mathbb{R}^d$ and $\lambda>0$\footnote{Some texts ask only that $\lambda\neq 0$. We consider only positive homothets.}; the number $\lambda$ is said to be the \textit{coefficient of the homothety}\footnote{An homothety maps every point $p\subset \mathbb{R}^d$ to $\lambda p+x$, for some $x\in\mathbb{R}^d$, $\lambda\neq 0$.}. From here on, $C$ will stand for a convex body in $\mathbb{R}^d$.

We say that a set of points $S\subset\mathbb{R}^d$ is \textit{non-$t/C$-degenerate} if the boundary of any homothet of $C$ contains at most $t$ elements of $S$. We say that $S$ is in $C$-\textit{general position} if it is non-$(d+2)/C$-degenerate.

All measures we consider in this work are Borel measures in $\mathbb{R}^d$ which take finite values on all compact sets. A measure $\mu$ is \textit{finite} if $\mu(\mathbb{R}^d)<\infty$. We say that a measure is \textit{non-$C$-degenerate} if it vanishes on the boundary of every homothet of $C$. Notice that, in particular, any absolutely continuous measure (with respect to the Lebesgue measure) is non-$C$-denegerate. Finally, a measure $\mu$ is said to be \textit{$C$-nice} if it is finite, non-$C$-degenerate, and there is a ball $K\subset\mathbb{R}^d$ such that $\mu(K)=\mu(\mathbb{R}^d)$. 

Given a set of points $S\subset\mathbb{R}^d$ (resp. a measure $\mu$) and a positive number $k$, an homothet will be called a $k^+/S$-\textit{homothet} ($k^+/\mu$-\textit{homothet}) if it contains at least $k$ elements of $S$ (if $\mu(C')\geq k $). Similarly, $k^-/S$-\textit{homothets} and $k^-/\mu$-\textit{homothets} are homothets that contain at most $k$ points and have measure at most $k$, respectively.

For any finite set $S$ and any positive integer $k$, define $f(C,k,S)$ as the least number of $k^-/S$-homothets of $C$ that can be used to cover $S$, and $g(C,k,S)$ as the maximum number of interior disjoint $k^+/S$-homothets of $C$ that can be arranged in $\mathbb{R}^d$. Similarly, for any $C$-nice measure $\mu$ and any real number $k>0$, define $f(C,k,\mu)$ as the the minimum number number of $k^-/\mu$-homothets that cover $K$, where $K$ denotes the ball such that $\mu(B)=\mu(\mathbb{R}^d)$\footnote{Strictly speaking, $f$ is a function of $C,k,\mu$ and $K$. This will not cause any trouble, however, since all the properties that we derive for $f$ will hold independently of the choice of $K$.}, and define $g(C,k,\mu)$ as the maximum number of interior disjoint $k^+/\mu$-homothets that can be arranged in $\mathbb{R}^d$. It is not hard to see that, since  $S$ is finite and $\mu$ is $C$-nice, $f$ and $g$ are well defined and take only non-negative integer values.

Next, we introduce $\alpha$-\textit{fat} convex objects. For any point $x\in\mathbb{R}^d$ and any positive $r$, let $B(x,r)$ denote the open ball with center $x$ and radius $r$ (with the Euclidean metric). We write $B^d$ for $B(O,1)$, where $O$ denotes the origin (this way, $rB^d$ denotes the ball of radius $r$ centered at the origin). Given $\alpha\in (0,1]$, a convex body $C$ will be said to be $\alpha$-\textit{fat} if $B(x,\alpha r)\subseteq C\subseteq B(x,r)$ for some $x$ and $r$. The following well-known fact (e.g.~\cite{fatlemma,fattening}) will play a key role in ensuring that the hidden constants in the bounds of $f$ and $g$ are independent of $C$.

\begin{lemma}\label{teo:fat}
Given a convex body $C\subset\mathbb{R}^d$, there exists a non-singular affine transformation $T$ such that $T(C)$ is $1/d$-fat. More precisely, $B^d\subseteq T(C)\subseteq dB^d$.
\end{lemma}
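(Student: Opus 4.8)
The plan is to build $T$ from the \emph{John ellipsoid} of $C$, i.e.\ the ellipsoid of maximal volume contained in $C$. First I would check that such an ellipsoid exists: writing an ellipsoid as $E=\{x:(x-c)^\top A(x-c)\le 1\}$ with $c\in\mathbb{R}^d$ and $A$ positive definite, its volume equals $\kappa_d/\sqrt{\det A}$, where $\kappa_d=\mathrm{vol}(B^d)$. Since $C$ is compact with nonempty interior it contains some ball, so the supremum of $\mathrm{vol}(E)$ over ellipsoids $E\subseteq C$ is positive; and since every such $E$ has center in $C$ and lies inside the compact set $C$, a routine compactness argument (e.g.\ Blaschke selection) shows the supremum is attained by a genuine, non-degenerate ellipsoid $E_0$. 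Composing with the non-singular affine map carrying $E_0$ to $B^d$, we may assume $E_0=B^d$ is the largest-volume ellipsoid inside $C$; in particular $B^d\subseteq C$, which is half of the claim.

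It remains to prove $C\subseteq dB^d$, and I would do this by contradiction. Suppose $p\in C$ with $\|p\|=t>d$; after a rotation assume $p=te_1$. Then the ``ice-cream cone'' $D:=\mathrm{conv}(B^d\cup\{p\})$ is contained in $C$. I would restrict attention to ellipsoids of revolution about the $e_1$-axis, $E_{c,a,b}=\{x:(x_1-c)^2/a^2+(x_2^2+\cdots+x_d^2)/b^2\le 1\}$, noting that $B^d=E_{0,1,1}\subseteq D$. Writing down the condition for $E_{c,a,b}\subseteq D$ — the binding constraint is tangency to the conical part of $\partial D$, together with $E_{c,a,b}$ not protruding past the flat cap at $p$ — one may, for $b$ slightly less than $1$, solve for the largest admissible $a$ and the matching $c$; a short first-order computation in $1-b$ then shows that the resulting ellipsoid has volume $ab^{d-1}\kappa_d$ strictly larger than $\kappa_d$ precisely when $t>d$. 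This contradicts the maximality of $B^d$, so every point of $C$ lies within distance $d$ of the origin, i.e.\ $C\subseteq dB^d$. Taking $x=O$ and $r=d$ in the definition of fatness shows $T(C)$ is $1/d$-fat.

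The main obstacle is the core estimate in the second paragraph: correctly formulating when an ellipsoid of revolution sits inside the cone $D$ and pushing the perturbation through cleanly. It is a classical but slightly fiddly computation; should one prefer to avoid it, the statement is exactly the Löwner–John theorem quoted in \cite{fatlemma,fattening}, and the constant $d$ is best possible, as the regular simplex (whose maximal inscribed ellipsoid is a ball of radius equal to $1/d$ times its circumradius) shows.
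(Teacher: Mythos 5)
The paper does not prove this lemma at all --- it is quoted as a known fact (the L\"owner--John theorem) with references --- so there is no internal proof to compare against; what you have written is the standard textbook proof of that cited result, and it is essentially correct. Your normalization via the maximal-volume inscribed ellipsoid is the right starting point, and the contradiction via the ``ice-cream cone'' $D=\mathrm{conv}(B^d\cup\{te_1\})$ is the classical route for the non-symmetric case. For the record, the first-order computation you defer does close: taking $b=1-\varepsilon$, $a=1+\delta$, center $c=a-1$ (so the ellipsoid still reaches $-e_1$), the tangency condition against the lateral surface of the cone, $c/t+\sqrt{a^2/t^2+b^2(1-1/t^2)}=1$, linearizes to $\delta=(t-1)\varepsilon$, whence $ab^{d-1}=1+(t-d)\varepsilon+O(\varepsilon^2)>1$ exactly when $t>d$. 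The one step you should not wave away is the actual containment $E_{c,a,b}\subseteq D$: tangency to the cone plus ``not protruding past $-e_1$'' is not literally sufficient, and the clean way to finish is to split at the hyperplane $x_1=1/t$ where the cone touches the sphere, checking separately that the part of the ellipsoid with $x_1\le 1/t$ lies in $B^d$ and the part with $x_1\ge 1/t$ lies in the cone (both hold for small $\varepsilon$). Two minor remarks: the conclusion is $T(C)\subseteq d\overline{B^d}$ (closed ball --- the simplex attains equality, so the paper's use of the open ball $dB^d$ is a harmless abuse shared by your write-up); and an alternative finish, avoiding the cone computation entirely, is John's decomposition of the identity via contact points, though for the purposes of this paper either argument, or the bare citation, suffices.
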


By a \textit{planar embedded graph} we mean a planar graph drawn in the plane so that the vertices correspond to points, the edges are represented by line segments, no edge contains a vertex other than its endpoints, and no two edges intersect, except possibly at a common endpoint.

As usual, $\mathbb{S}^{d-1}$ stands for the unit sphere in $\mathbb{R}^d$ centered at the origin. We denote the Euclidean norm of a point $x\in\mathbb{R}^d$ by $|x|$. Throughout this text we use the standard $O$ and $\Omega$ notations for asymptotic upper and lower bounds, respectively. The precise definitions can be found, for example, in any introductory textbook on algorithm design and analysis.

\subsection{Packing and covering densities}\label{sec:packcover}
A family of sets in $\mathbb{R}^d$ forms a \textit{packing} if their interiors are disjoint, and it forms a \textit{covering} if their union is the entire space. The \textit{volume} of a measurable set $A\subset\mathbb{R}^d$ is simply its Lebesgue measure, which we denote by $\text{Vol}(A)$. The precise definitions of packing and covering densities vary slightly from text to text; for reasons that will become apparent later, we follow~\cite{packandcover}.

Let $\mathcal{A}$ be a family of sets, each having finite volume, and $D$ a set with finite volume, all of them in $\mathbb{R}^d$. The \textit{inner density} $d_{\text{inn}}(\mathcal{A}|D)$ and \textit{outer density} $d_{\text{out}}(\mathcal{A}|D)$ are given by $$d_{\text{inn}}(\mathcal{A}|D)=\frac{1}{\text{Vol}(D)}\sum_{A\in\mathcal{A},A\subset D}\text{Vol}(A),$$ $$d_{\text{out}}(\mathcal{A}|D)=\frac{1}{\text{Vol}(D)}\sum_{A\in\mathcal{A},A\cap D\neq\emptyset}\text{Vol}(A).$$ 

We remark that these densities may be infinite.

The \textit{lower density} and \textit{upper density} of $\mathcal{A}$ are defined as $$d_{\text{low}}(\mathcal{A})=\liminf_{r\rightarrow\infty}d_{\text{inn}}(\mathcal{A}|rB^d),$$ $$d_{\text{upp}}(\mathcal{A})=\limsup_{r\rightarrow\infty}d_{\text{out}}(\mathcal{A}|rB^d).$$

It is not hard to see that these values are independent of the choice of $O$.

The \textit{packing density} and \textit{covering density} of a convex body $C$ are given by $$\delta(C)=\sup\{d_\text{upp}(\mathcal{P}):\mathcal{P}\text{ is a packing of }\mathbb{R}^d\text{ with congruent copies of }C\},$$ $$\Theta(C)=\text{inf}\{d_\text{low}(\mathcal{C}):\mathcal{C}\text{ is a covering of }\mathbb{R}^d\text{ with congruent copies of }C\}.$$

The \textit{translational packing density} $\delta_H(C)$ and the \textit{translational covering density} $\Theta_H(C)$ are defined by taking the supremum and infimum over all packings and coverings with translates of $C$, instead of congruent copies. See \cite{packandcover} for a summary of the known bounds for the packing and covering densities.

Notice that the definitions of upper and lower density of $\mathcal{A}$ with respect to $D$ are directly tied to the Lebesgue measure, but could be readily extended to other measures. Similarly, the translates of $C$ can be interpreted as homothets of $C$ that have the same Lebesgue measure as $C$. These observations motivate the following generalization of the previous definitions.

Let $\mu$ be a  measure on $\mathbb{R}^d$. For a family $\mathcal{A}$ of sets of finite measure and a set $D$, also of finite measure, we define the \textit{inner density with respect to $\mu$} $d_{inn}(\mu,\mathcal{A}|D)$ and the \textit{outer density with respect to $\mu$} $d_{out}(\mu,\mathcal{A}|D)$ as $$d_{inn}(\mu,\mathcal{A}|D)=\frac{1}{\mu(D)}\sum_{A\in\mathcal{A},A\subset D}\mu(A),$$ $$d_{out}(\mu,\mathcal{A}|D)=\frac{1}{\mu(D)}\sum_{A\in\mathcal{A},A\cap D\neq\emptyset}\mu(A).$$

The \textit{lower density with respect to $\mu$} and \textit{upper density with respect to $\mu$} of $\mathcal{A}$ are now given by $$d_{\text{low}}(\mu,\mathcal{A})=\liminf_{r\rightarrow\infty}d_{\text{inn}}(\mu,\mathcal{A}|rB^d),$$ $$d_{\text{upp}}(\mu,\mathcal{A})=\limsup_{r\rightarrow\infty}d_{\text{out}}(\mu,\mathcal{A}|rB^d).$$  

If $\mu$ is non-$C$-degenerate and $\mu(C)>0$, then we define the \textit{homothety packing density with respect to $\mu$} and the \textit{homothety covering density with respect to $\mu$} as $$\delta_H(\mu,C)=\sup\{d_\text{upp}(\mu,\mathcal{P}):\mathcal{P}\text{ is a packing of } \mathbb{R}^d \text{ with homothets of }C\text{ of measure }\mu(C)\},$$ $$\Theta_H(\mu,C)=\text{inf}\{d_\text{low}(\mu,\mathcal{C}):\mathcal{C}\text{ is a covering of } \mathbb{R}^d \text{ with homothets of }C\text{ of measure }\mu(C)\}.$$

Given the properties of $\mu$, it is not hard to see that sets over which we take the infimum and the supremum are nonempty.

The packing and covering density can also be generalized in a natural way by considering packings and coverings with sets that are similar\footnote{Two sets $A$ and $B$ in $\mathbb{R}^d$ are similar if there exists a $\lambda>0$ such that $\lambda A$ and $B$ are congruent.} to $C$ and have fixed measure $\mu(C)$. However, all lower bounds on $\delta_H(\mu,C)$ and all upper bounds on $\Theta_H(\mu,C)$, which are one of the main focus points of this work, are obviously true for the (non-translational) packing and covering densities as well. Just as in the Lebesgue measure case, the packings and covering densities with respect to $\mu$ measure, in a sense, the efficiency of the best possible packing/covering of the measure space induced by $\mu$.

See~\cite{packandcover} for a review of the existing literature on packings and coverings and~\cite{researchproblems} for further open problems and interesting questions.

\subsection{The Besicovitch covering theorem}\label{sec:besicovitch}

The Besicovitch covering theorem extends an older result by Vitali~\cite{Vitali}. The result was first shown by Besicovitch in the planar case, and then generalized to higher dimensions by Morse~\cite{Morse_1947}, it can be stated as follows

\begin{theorem}\label{teo:besicovitch}
There is a constant $c_d$ (which depends only on $d$) with the following property: Given a bounded subset $A$ of $\mathbb{R}^d$ and a collection $\mathcal{F}$ of Euclidean balls such that each point of $A$ is the center of at least one of these balls, it is possible to find subcollections $\mathcal{F}_1,\mathcal{F}_2,\dots,\mathcal{F}_{c_d}$ of $\mathcal{F}$ such that each $\mathcal{F}_i$ consists of disjoint balls and $$A\subset\bigcup_{i=1}^{c_d}\bigcup_{B\in\mathcal{F}_i}B.$$
\end{theorem}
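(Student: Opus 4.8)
The plan is to follow the classical route for Besicovitch-type theorems: a greedy selection of balls with (essentially) decreasing radii, a packing lemma bounding how many selected balls can cluster near a point, and a greedy colouring that turns the selection into the required disjoint subfamilies.

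First I would make two harmless reductions. Discard from $\mathcal{F}$ every ball whose centre is not in $A$; the hypothesis is preserved. If the radii appearing in $\mathcal{F}$ are unbounded, choose $B(a,r)\in\mathcal{F}$ with $a\in A$ and $r>\operatorname{diam}(A)$; then $A\subseteq B(a,r)$ and a single ball suffices, so we may assume the radii are bounded. Now iterate: having chosen $B_1,\dots,B_{j-1}$, put $A_j=A\setminus\bigcup_{i<j}B_i$, stop if $A_j=\varnothing$, and otherwise let $R_j$ be the supremum of radii of balls of $\mathcal{F}$ centred in $A_j$ and pick $B_j=B(a_j,r_j)\in\mathcal{F}$ with $a_j\in A_j$ and $r_j>R_j/2$. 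This yields a finite or countable sequence with two properties I will lean on: (P1) for $i<j$ one has $a_j\notin B_i$, hence $|a_i-a_j|\ge r_i$; and (P2) for $i<j$ one has $r_j<2r_i$, since $A_j\subseteq A_i$ forces $R_j\le R_i$, whence $r_j\le R_j\le R_i<2r_i$. The selection also covers $A$: were some $x\in A$ in no $B_i$, then $x\in A_j$ for every $j$, a fixed ball $B(x,\rho)\in\mathcal{F}$ would stay admissible forever and force $r_j>\rho/2$ for all $j$; but then by (P1) the balls $B(a_i,\rho/6)$ would be pairwise disjoint, contained in the bounded set $\{y:\operatorname{dist}(y,A)\le\rho/6\}$, and each of volume $(\rho/6)^{d}\operatorname{Vol}(B^{d})>0$ — impossible for infinitely many of them.

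The core is the claim that there is a constant $M_d$ such that each $B_i$ meets at most $M_d$ of the balls $B_j$ with $j<i$; granting it, the theorem follows with $c_d=M_d+1$ by colouring $B_1,B_2,\dots$ greedily in order of selection, giving $B_i$ any colour in $\{1,\dots,M_d+1\}$ unused by the (at most $M_d$) earlier balls it meets — each colour class is then automatically a packing, and together they still cover $A$. To prove the claim, fix $i$ and let $B_j$ ($j<i$) meet $B_i$; by (P2) no such $r_j$ is below $r_i/2$. If $r_j<8r_i$ ("comparable"), then $B_j\cap B_i\ne\varnothing$ gives $|a_i-a_j|<9r_i$ while (P1) keeps the centres $\ge r_i/2$ apart, so a volume count bounds these balls by a dimensional constant. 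If $r_j\ge 8r_i$ ("large"), note that $B_j$ meets $B_i$ but, by (P1), does not contain $a_i$, so $|a_i-a_j|\in[r_j,r_j+r_i]$: the centre $a_j$ is seen from $a_i$ at distance within $r_i$ of $r_j$. I would then take a maximal subfamily of the large balls whose radii are pairwise in ratio at least $4$. For two such, say with $r_{j'}\ge 4r_j$, property (P2) forces $B_{j'}$ to have been chosen before $B_j$, so (P1) gives $|a_j-a_{j'}|\ge r_{j'}$, and a law-of-cosines estimate — using that $|a_i-a_j|^{2}-r_j^{2}$ and $|a_i-a_{j'}|^{2}-r_{j'}^{2}$ are only $O(r_i r_j)$ and $O(r_i r_{j'})$, and that $r_j^{2}\le\tfrac14 r_j r_{j'}$ — bounds the angle $\angle(a_j-a_i,\,a_{j'}-a_i)$ below by an absolute positive constant; hence this subfamily has size $O_d(1)$. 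By maximality every remaining large ball lies within a bounded radius range of one of the chosen radii, a regime where the packing count again applies, and the claim follows. (The same claim yields the usual overlap form of the theorem — at most $M_d+1$ selected balls through any point — since they all meet the one of largest index.)

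The step I expect to be the main obstacle is the bookkeeping inside this claim, above all the angular-separation estimate for large balls ranging over many scales: the crude metric bounds are hopelessly lossy (bounding a genuinely small quantity such as $|a_i-a_j|^{2}-r_j^{2}$ by $r_j^{2}$ destroys everything), and the computation only closes once one exploits that $|a_i-a_j|$ is pinned to within $r_i$ of $r_j$ and that (P2) dictates which of two nearly-nested balls came first. Extracting clean absolute constants uniformly across all scales is the delicate part; the reductions, the covering argument, and the final colouring are routine.
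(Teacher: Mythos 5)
The paper does not prove Theorem \ref{teo:besicovitch}: it is stated in Section \ref{sec:besicovitch} as a known result, with references to Besicovitch, Morse, and Bliedtner--Loeb, and the paper then deliberately avoids it (its own bounds on $f$ and $g$ are proved without it). So there is no in-paper argument to compare yours against; judged on its own, your proposal is the classical Besicovitch--Morse proof and it is correct. The reductions, the greedy choice with $r_j>R_j/2$, properties (P1)--(P2), the ``an uncovered point forces infinitely many disjoint balls of radius $\rho/6$ in a bounded set'' argument, and the final greedy colouring are all sound. The overlap claim also closes: in the comparable regime $r_i/2<r_j<8r_i$ the centres are pairwise more than $r_i/2$ apart inside $B(a_i,9r_i)$, so a volume count applies; in the large regime, for $r_{j'}\ge 4r_j$ with both at least $8r_i$, one gets $\cos\angle(a_j-a_i,\,a_{j'}-a_i)\le\frac{r_j}{2r_{j'}}+\frac{3r_i}{2r_{j'}}+\frac{3r_i}{2r_j}\le\frac18+\frac{3}{64}+\frac{3}{16}<\frac12$ uniformly over all scales, so the spherical packing bound caps your $4$-separated maximal subfamily, and maximality confines every remaining large ball to one of $O_d(1)$ bounded radius ranges where the volume count applies again. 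For what it is worth, the paper's own Besicovitch-free proofs of Theorems \ref{teo:fbound} and \ref{teo:gbound} (via Lemma \ref{teo:neighborhoodcover} and Claim \ref{teo:satelliteconfig}) run on the same two-regime engine --- a packing count for comparable homothets plus an angular-separation estimate for distant ones --- adapted to arbitrary convex bodies through the $1/d$-fatness normalization of Lemma \ref{teo:fat}.
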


In fact, Morse~\cite{Morse_1947} and Bliedtner and Loeb~\cite{BliedtnerLoeb} extended the result to more general objects and normed vector spaces.  Later, F{\"u}redi and Loeb \cite{besicovitchconstant} studied the least value of $c_d$ for which the result holds.

Assume that a finite set $S\subset\mathbb{R}^d$ is such that for each point $p\in S$ there is a ball with center $p$ that covers exactly $k$ elements of $S$, then the collection of all these $|S|$ balls covers $S$. By the Besicovitch covering theorem, we can find $c_d$ subcollections, each composed of disjoint balls, whose union covers $S$. Each subcollection clearly contains at most $\frac{|S|}{k}$ balls and, thus, their union forms a covering of $S$ formed by at most $c_d\frac{|S|}{k}$ $k^-/S$-homothets of $B^d$. Since the union of the subcollections covers $S$, it contains at least $\frac{|S|}{k}$ balls, and we can find a subcollection with at least $\frac{1}{c_d}\frac{|S|}{k}$ balls, which is actually a packing formed by $k^+/S$-homothets of $B^d$. This shows that $f(B^d,k,S)=O_d(\frac{|S|}{k})$ and $g(B^d,k,S)=\Omega_d(\frac{|S|}{k})$. A careful analysis of the proof by Bliedtner  and Loeb~\cite{BliedtnerLoeb} (combined with some other geometric results), reveals that this can be extended to general convex bodies.

\subsection{VC-dimension and \texorpdfstring{$\epsilon$}{e}-nets}\label{sec:VCnets}

A \textit{set system} is a pair $\Sigma =(X,\mathcal{R})$, where $X$ is a set of base elements and $R$ is a collection of subsets of $X$. Given a set system $\Sigma =(X,\mathcal{R})$ and a subset $Y\subset X$, let $\mathcal{R}|_Y=\{Y\cap R : R\in\mathcal{R}\}$. The VC-dimension of the set system is the maximum integer $d$ for which there is a subset $Y\subset X$ with $|Y|=d$ such that $\mathcal{R}|_Y$ consists of all $2^d$ subsets of $Y$, the VC-dimension may be infinite. In a way, the VC-dimension measures the complexity of a set system, and it plays a very important role in multiple areas, such as computational geometry, statistical learning theory, and discrete geometry.

Let $\Sigma =(X,\mathcal{R})$ be a set system with $X$ finite. An $\epsilon$-\textit{net} for $\Sigma$ is a set $N\subseteq X$ such that $N\cap R\neq \emptyset$ for all $R\in\mathcal{R}$ with $|R|\geq \epsilon |X|$. A landmark result of Haussler and Welzl~\cite{VCdimension} tells us that range spaces with VC-dimension at most $d$ admit $\epsilon$-nets whose size depends only on $d$ and $\frac{1}{\epsilon}$; in fact, any random subset of $X$ of adequate size will be such an $\epsilon$-net with high probability. The precise bounds were later improved by Pach and Tardos \cite{boundsfornets}.

Given a point set $X$ and a family $\mathcal{R}$ of sets (which are not necessarily subsets of $X$), the \textit{primal set system} $(X,\mathcal{R}|_X)$ induced by $X$ and $\mathcal{R}$ is the set system with base set $X$ and $\mathcal{R}|_X=\{R\cap X\text{ }|\text{ } R\in\mathcal{R}\}$. If $X$ is finite, a \textit{weak} $\epsilon$-\textit{net} for the range space $(X,\mathcal{R}|_X)$ is a set of elements $W\subset\bigcup_{R\in\mathcal{R}R}$ such that $W\cap R\neq \emptyset$ for all $R\in\mathcal{R}$ with $|R|_X|\geq \epsilon |X|$. Weak $\epsilon$-nets have been particularly studied in geometric settings, where $X$ is a set of points and the elements of $\mathcal{R}$ are geometric objects; and this is also the setting that we care about here. The most famous result in the subject asserts the existence of a weak $\epsilon$-net whose size depends only on $d$ and $\epsilon$ for any primal set system induced by a finite set of points and the convex subsets of $\mathbb{R}^d$, the best known upper bounds on the size of such a net are due to Rubin \cite{RubinHigDim,RubinWeakNets}. Weak epsilon nets can also be defined for finite measures: if $\mu$ is finite and $\mathcal{R}$ is a family of sets in $\mathbb{R}^d$, a weak $\epsilon$-net for the pair $(\mu,\mathcal{R})$ consists of a collection $W$ of points in $\mathbb{R}^d$ such that $W\cap R\neq \emptyset$ for all $R\in\mathcal{R}$ with $\mu(R)\geq \epsilon\mu(\mathbb{R}^d)$.

We refer the reader to~\cite{surveynets} for a survey on $\epsilon$-nets and other similar concepts.

\subsection{Delaunay triangulations }\label{sec:delaunay}

Given a finite point set $S\subset\mathbb{R}^2$, the \textit{Delaunay graph} $D(S)$ is the embedded planar graph with vertex set $S$ in which two vertices are adjacent if an only if there is an Euclidean ball that contains those two points but no other point of $S$. It is not hard to check that $D(S)$ is indeed planar and that, as long as no four points lie on a circle and no three belong to the same line, $D(S)$ will actually be a triangulation\footnote{An embedded planar graph with vertex set $S$ is a \textit{triangulation} if all its bounded faces are triangles and their union is the convex hull of $S$.}. 

Delaunay graphs have a natural generalization which arises from considering general convex bodies instead of balls. The \textit{Delaunay graph of $S$ with respect to $C$}, which we denote by $D_C(S)$, is the embedded planar graph with vertex set $S$ and an edge between two vertices if an only if there is an homothet of $C$ that covers those two points but no other point of $S$. If $C$ is strictly convex and has smooth boundary, and $S$ is in $C$-general position and does not contain three points on the same line, then $D_C(S)$ will again be a triangulation. The edges of $D_C(S)$ encode the pairs of points of $S$ that can be covered using a $2^-/S$-homothet of $C$ and, thus, finding an optimal cover with $2^-/S$-homothets is equivalent to finding the largest possible matching in $D_C(S)$. 

It is good to keep in mind that Delaunay graphs can be defined analogously in higher dimensions, even if we will only really need them in the planar case.  

Many properties of generalized Delaunay triangulations can by found in Cano's PhD dissertation \cite{generalizeddelaunay}.

\subsection{Previous related work}\label{sec:previouswork}
The functions $f$ and $g$ have been indirectly studied in some particular cases. The first instance of this that we know of appeared in a paper by Szemerédi and Trotter~\cite{combdistprojective}, who obtained a lemma that implies a bound of $g(C,k,S)=\Omega(\frac{|S|}{k})$ in the case that $C$ is a square in the plane; they applied this result to a point-line incidence problem.

Dillencourt~\cite{delaunaytoughness} studied the largest matching that can be obtained in a point set using disks; in our setting, this is actually equivalent to the $k=2$  case of the covering problem. Dillencourt showed that all planar Delaunay triangulations (with respect to disks) are $1$-tough\footnote{Given a positive real number $t$, a graph $G$ is $t$-\textit{tough} if in order to split it into any number $k \geq 1$ of connected components, we need to remove at least $tk$ vertices.} and thus, by Tutte's matching theorem, contain a matching of size $\lfloor\frac{|S|}{2}\rfloor$. Ábrego et al. \cite{matchingsquares} obtained a similar result for squares; they essentially proved that, as long as no two points lie on the same vertical or horizontal line, the Delaunay triangulation with respect to an axis aligned square contains a Hamiltonian path and, as a consequence, a matching of size $\lfloor\frac{|S|}{2}\rfloor$. These results immediately translate to $f(C,2,S)\leq\lceil\frac{|S|}{2}\rceil$ whenever $C$ is a disk or a square (and $S$ has the required properties), this bound is obviously optimal. Panahi et al. \cite{trianglematchingsfirst} and Babu et al. \cite{trianglematchings} studied the problem for equilateral triangles (their results actually hold for any triangle, as can be seen by applying an adequate affine transformation), it was shown in the second of these papers that as long as $S$ is in general position the corresponding Delaunay graph must admit a matching of size at least $\lceil\frac{|S|-1}{3}\rceil$ and that this is tight. Ábrego et al. \cite{matchingsquares} also studied \textit{strong matchings} for disks and squares, which are interior disjoint collections of homothets, each of which covers exactly two points of the set, their results imply that $g(C,2,S)\geq\lceil\frac{|S|-1}{8}\rceil$ if $C$ is a disk and $g(C,2,S)\geq\lceil\frac{|S|}{5}\rceil$ if $C$ is a square, again under some mild assumptions on $S$. The bound for squares was improved to $g(C,2,S)\geq\lceil\frac{|S|-1}{4}\rceil$ by Biniaz et al. in \cite{strongmatchings}, where they also showed that $g(C,2,S)\geq\lceil\frac{n-1}{9}\rceil$ in the case that $C$ is an equilateral triangle and presented various algorithms for computing large strong matchings of various types. In a similar vein, large matchings in Gabriel graphs\footnote{The \textit{Gabriel graph} of a planar point set $S$ is the graph in which two points $p,q\in S$ are joined by an edge if an only if the disk whose diameter is the segment from $p$ to $q$ contains no other point of $S$.} and strong matchings with upward and downward equilateral triangles are treated in \cite{gabrielmatching,strongmatchings}.

Bereg et al.~\cite{matchingnp} considered matchings and strong matchings of points using axis aligned rectangles and squares. They provided various algorithms for finding large such matchings and showed that deciding if a point set has a strong perfect matching using squares (i.e. deciding if $g(C,2,S)=\frac{|S|}{2}$ in the case that $C$ is a square) is $NP$-hard.

\section{Results}\label{chap:results}

\subsection{Overview of Section \ref{chap:cover}}\label{sec:overview4}
In Section~\ref{sec:weaknets} we use a simple technique by Kulkarni and Govindarajan~\cite{weaknets} to construct a weak $\epsilon$-net of size $O_d(\frac{1}{\epsilon})$ for any primal range space (on a finite base set of points $S$) induced by the family $\mathcal{H}_C$ of all homothets of a convex body $C$. This result follows too from the known bounds on the Hadwiger-Debrunner $(p,q)$-problem for homothets (see ~\cite{pq-problem}), but our proof is short and elementary, and it also yields an analogous result for finite measures. We remark that Naszódi and Taschuk~\cite{infiniteVC} showed that $(\mathbb{R}^d, \mathcal{H}_{C})$ may have infinite VC-dimension for $d\geq 3$, so there might be no small (strong) $\epsilon$-net for $(S, \mathcal{H}_{C}|_S)$. For $d=2$, however, any range space induced by pseudo-disks (and, in particular, $(S, \mathcal{H}_{C}|_S)$), admits an $\epsilon$-net of size $O(\frac{1}{\epsilon})$~\cite{newexistenceproofs,shallowcell}.

In Section \ref{sec:covering}, we use the result on weak $\epsilon$-nets to show that, under some mild assumptions, $f(C,k,S)=O_d(\frac{|S|}{k}),f(C,k,\mu)=O_d(\frac{\mu(\mathbb{R}^d)}{k})$. The proof does not make use of the Besicovitch covering theorem (see Section \ref{sec:besicovitch}), and it will provide us with a scheme for designing one of the algorithms discussed in Section \ref{chap:computational}.

The bound for measures is then applied in Section \ref{sec:coverdensity} to prove that if $\mu$ is non-$C$-degenerate, $\mu(C)>0$ and $\mu(\mathbb{R}^d)=\infty$, then the translational covering density $\Theta_H(\mu,C)$ is bounded from above by a function of $d$. It is easy to see that $\Theta_H(\mu,C)$ is infinite for finite measures, so the $\mu(\mathbb{R}^d)=\infty$ condition is essential.

\subsection{Overview of Section \ref{chap:pack}}\label{sec:overview5}

In Section \ref{sec:pack} we prove that, under the same conditions that allowed us to obtained an upper bound for $f$, $g(C,k,S)=\Omega_d(\frac{|S|}{k}),g(C,k,\mu)=\Omega_d(\frac{\mu(\mathbb{R}^d)}{k})$. The proof relies on some properties of collections of homothets which intersect a common homothet; this is very similar to the study of $\tau$\textit{-satellite configurations} in the proof of  \cite{BliedtnerLoeb,Morse_1947}.

Similar to the covering case, the bound on $g$ is then utilized in Section \ref{sec:packdensity} to prove that if $\mu$ is non-$C$-degenerate and $\mu(\mathbb{R}^d)>\mu(C)>0$, then the translational packing density $\Theta_H(\mu,C)$ is bounded from below by a function of $d$. The $\mu(\mathbb{R}^d)>\mu(C)$ condition is clearly necessary.

\subsection{Overview of Section \ref{chap:computational}}\label{sec:overview6}

Given $C\subset\mathbb{R}^d$ and a positive integer $k$, let  $C$-$k$-COVER denote the optimization problem that consists of determining, given an instance point set $S\subset\mathbb{R}^d$, the least integer $m$ such that $S$ can be covered by $m$ $k^-/S$-homothets of $C$. Similarly, the problem $C$-$k$-PACK consists of finding the largest $m$ such that there is a packing composed of $m$ $k^+/S$-homothets of $C$.  

Section \ref{sec:algorithms} is devoted to the description of polynomial time algorithms for approximating $C$-$k$-COVER and $C$-$k$-PACK up to a multiplicative constant in the case that $C$ is a disk. The proofs are based on the ideas developed in sections \ref{sec:weaknets}, \ref{sec:covering} and \ref{sec:pack}.

There has been extensive research regarding the complexity of geometric set cover problems, and a variety of these have been shown to be NP-complete, see~\cite{optpackcovernp} for one of the first works in this direction. As mentioned in Section \ref{sec:previouswork}, Bereg et al.~\cite{matchingnp} proved that when $C$ is a square it is NP-hard to decide if $g(C,2,S)=\frac{|S|}{2}$; this implies, in particular, that $C$-$2$-COVER is NP-hard for squares. As long as we are capable of computing $D_C(S)$ in polynomial time (which is the case for hypercubes, balls and any other convex body which can be described by a bounded number of algebraic inequalities), $f(C,2,S)$ can be computed, also in polynomial time, by applying any of the known algorithms for finding the largest possible matching in a given graph. However, in Section~\ref{sec:complexity} we show that if $C$ is a square and $k$ is a multiple of $4$, then deciding if $f(C,k,S)=\frac{|S|}{k}$ is NP-hard. Unfortunately, our proof is not very robust in the sense that it depends heavily on the fact that $C$ is a square and that $S$ is not required to be in general position.  

\subsection{Overview of Section \ref{chap:matching}}\label{sec:overview7}

As mentioned in Section~\ref{sec:previouswork}, Dillencourt~\cite{delaunaytoughness} showed that the Delaunay triangulation (with respect to disks) of a point set $S\subset\mathbb{R}^2$ with no three points on the same line and no four points on the same circle is $1$-tough. Biniaz~\cite{simpletough} later gave a simpler proof of this result. 

In Section \ref{sec:tough} we extend the technique of Biniaz to show that, under some assumptions on $C$ and $S$, $D_C(S)$ is almost $t$-tough, where $t$ depends on how fat $C$ is (or, rather, how fat it can be made by means of an affine transformation). This result is then applied, again in similar fashion to \cite{simpletough}, in Section \ref{sec:matchings} to bound $f(C,2,S)$. Using a well-known result by Nishizeki and Baybars \cite{planarmatchings} on the size of the largest matchings in planar graphs, we also obtain a weaker bound that holds in greater generality.

\section{Covering}\label{chap:cover}

\subsection{Small weak \texorpdfstring{$\epsilon$}{e}-nets for homothets\label{sec:weaknets}}

The purpose of this section is to prove the following result about weak $\epsilon$-nets. 

\begin{theorem} \label{teo:nets}
Let $C\subset\mathbb{R}^d$ be a convex body and denote the family of all homothets of $C$ by $\mathcal{H}_{C}$. Then, for any finite set $S\subset\mathbb{R}^d$ and any $\epsilon>0$, $(S, \mathcal{H}_{C}|_S)$ admits a weak $\epsilon$-net of size $O_d(\frac{1}{\epsilon})$, where the hidden constant depends only on $d$. Similarly, for any $C$-nice measure $\mu$, $(\mu,\mathcal{H}_{C})$ admits weak $\epsilon$-net of size $O_d(\frac{1}{\epsilon})$.
\end{theorem}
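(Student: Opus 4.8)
The plan is to follow the Kulkarni--Govindarajan approach: build the weak $\epsilon$-net greedily by repeatedly extracting a "heavy" homothet, recording a bounded number of witness points for it, deleting the points it captures, and iterating. First I would reduce to a normalized body. By Lemma~\ref{teo:fat} there is a non-singular affine transformation $T$ with $B^d \subseteq T(C) \subseteq dB^d$; since weak $\epsilon$-nets, homothets, and the cardinality/measure conditions are all preserved under invertible affine maps, it suffices to prove the statement for a $1/d$-fat body, which I now call $C$ as well. The key quantitative gain from fatness is this: a homothet $\lambda C + x$ sits between two concentric balls of radii $\lambda$ and $d\lambda$, so if two homothets of $C$ both contain a common point and both have coefficient in a dyadic range $[\rho, 2\rho)$, then each is contained in a ball of radius $O(d\rho)$ around that common point, hence inside a fixed ball of radius $O(d\rho)$; a volume/packing argument then shows that any point of $\mathbb{R}^d$ is contained in at most $c_d' = 2^{O(d \log d)}$ pairwise "incomparable in a suitable sense" homothets — more precisely, the relevant bound I want is that homothets of a fixed coefficient scale covering a given point, restricted to be inclusion-minimal among those containing a fixed captured point, have bounded overlap.

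The cleaner route, and the one I would actually write, is the direct greedy argument without dyadic bucketing. Set $t = \lceil \epsilon |S| \rceil$ (resp.\ the measure threshold $\epsilon \mu(\mathbb{R}^d)$). While there remains a homothet $H \in \mathcal{H}_C$ with $|H \cap S'| \geq t$ (where $S'$ is the current surviving point set, initially $S$), pick such an $H$ that is \emph{inclusion-minimal} with this property — possible because shrinking $H$ toward any of its points decreases the count in integer steps and is bounded below. For this minimal $H$, add to the net $W$ a carefully chosen bounded set $N(H)$ of $O_d(1)$ points of $S$ (not just of $S'$) with the following \emph{piercing property}: every homothet $H' \in \mathcal{H}_C$ that contains at least $t$ points of $S'$ and that "collides" with $H$ — say $H' \cap H \cap S' \neq \emptyset$, or more robustly $H'$ is not disjoint from the minimal witness region — must contain a point of $N(H)$. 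Then delete $H \cap S'$ from $S'$ and repeat. Each iteration removes at least $t \geq \epsilon|S|$ points, so there are at most $1/\epsilon$ iterations, giving $|W| \leq O_d(1/\epsilon)$. Correctness: any homothet $R$ with $|R \cap S| \geq t$ either still has $\geq t$ surviving points when the loop halts (impossible, as the loop only halts when no such homothet exists) or, at the first iteration where $R$ drops below threshold, it must have lost a point to the chosen $H$, hence $R$ meets $H \cap S'$, hence by the piercing property $R$ contains a point of $N(H) \subseteq W$.

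The main obstacle — the only real content — is establishing the piercing lemma: that for an inclusion-minimal heavy homothet $H$ there exist $O_d(1)$ points whose inclusion forces every competing heavy homothet meeting $H$ to be pierced. This is where fatness and the "homothets through a common point" analysis (the $\tau$-satellite-configuration style argument alluded to in Section~\ref{sec:overview5}) enter. The idea is: take any point $p \in H \cap S'$; a heavy homothet $H'$ through $p$ has coefficient at least comparable to that of $H$ (else $H' \subset$ a small neighborhood of $p$ and, by minimality of $H$, $H'$ could not be heavy while $H$ is minimal-heavy — one must argue this carefully, possibly comparing to the largest homothet avoiding all of $S'$ near $p$), and at most some bounded multiple (else $H'$ would engulf $H$, and then a point of $S$ witnessing that $H$ was needed can be placed in $N(H)$). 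Within this bounded coefficient window, homothets through $p$ all lie in a ball of radius $O_d(\lambda_H)$, and a Vitali/packing argument produces $O_d(1)$ points of $S$ meeting all of them. I would handle the measure case identically, replacing counting by $\mu$ and using non-$C$-degeneracy to ensure that shrinking a homothet continuously decreases its measure, so an inclusion-minimal heavy homothet with $\mu(\cdot \cap \text{surviving region}) = \epsilon\mu(\mathbb{R}^d)$ exactly exists; the $C$-niceness guarantees everything stays inside a fixed ball $K$, keeping the process finite. I expect the bookkeeping in the piercing lemma — getting a clean, dimension-only constant and correctly treating the degenerate coefficient extremes — to be the part that needs the most care.
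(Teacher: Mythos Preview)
Your greedy skeleton---repeatedly extract a heavy homothet, drop $O_d(1)$ witness points into $W$, delete the captured points, iterate at most $1/\epsilon$ times---is exactly the paper's, and your correctness argument (look at the first iteration at which a given heavy $R$ loses a point to the chosen $H$) is the paper's as well. The gap is entirely in your piercing step, and it stems from two choices that make things harder than necessary.

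First, choosing $H$ to be \emph{inclusion}-minimal rather than the homothet of globally \emph{smallest coefficient} among all currently heavy ones loses the one fact you need downstream. With smallest coefficient, every other heavy homothet $R$ automatically has coefficient at least that of $H$, which is precisely the hypothesis of the piercing lemma. With mere inclusion-minimality, your sentence ``by minimality of $H$, $H'$ could not be heavy while $H$ is minimal-heavy'' does not go through: a tiny heavy $H'$ straddling the boundary of $H$ is incomparable to $H$ under inclusion, so $H$ being inclusion-minimal tells you nothing about the coefficient of $H'$.

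Second, you insist that the witness points $N(H)$ lie in $S$. For a \emph{weak} net this is not required, and dropping that requirement is exactly what makes the piercing step a two-line volume argument---the paper's Lemma~\ref{teo:hit}. After normalizing so that $B^d\subseteq H\subseteq dB^d$, any homothet $R$ with coefficient at least that of $H$ and $R\cap H\neq\emptyset$ contains a translate of $H$ lying inside $[-3d,3d]^d$; hence a fixed grid of spacing $2/\sqrt{d}$ inside this cube---at most $(3d^{3/2})^d$ points, none of which need belong to $S$---pierces every such $R$. No satellite-configuration analysis, no dyadic bucketing, and no upper bound on the coefficient of $R$ is needed. (Insisting on witnesses from $S$ would in fact yield a \emph{strong} $\epsilon$-net of size $O(1/\epsilon)$; since $(\mathbb{R}^d,\mathcal{H}_C)$ can have infinite VC-dimension for $d\geq 3$, this is not known to be possible, so that route is likely a dead end.)
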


The simple lemma below will provide us with the basic building blocks for constructing the weak $\epsilon$-net.

\begin{lemma}\label{teo:hit}
There is a constant $c_1=c_1(d)$ with the following property: Given a convex body $C\subset\mathbb{R}^d$, there is a finite set $P_C\subset\mathbb{R}^d$ of size at most $c_1$ that hits every homothet $C'$ of $C$ with $C'\cap C\neq\emptyset$ and homothety coefficient at least $1$.
\end{lemma}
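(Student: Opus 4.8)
The plan is to exploit Lemma~\ref{teo:fat} to reduce to the case of a fat body, and then cover a fixed bounded region by a grid of small cubes. First I would apply Lemma~\ref{teo:fat} to obtain a non-singular affine transformation $T$ with $B^d\subseteq T(C)\subseteq dB^d$. Since affine transformations map homothets to homothets, preserve homothety coefficients, and preserve the incidence relation ``$C'\cap C\neq\emptyset$'', it suffices to prove the statement for the fat body $T(C)$; note the resulting bound $c_1$ will depend only on $d$ because the containments $B^d\subseteq T(C)\subseteq dB^d$ are uniform in $C$. So from now on assume $B^d\subseteq C\subseteq dB^d$.

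Next I would localize the homothets in question. If $C'=\lambda C + x$ is a homothet with $\lambda\geq 1$ and $C'\cap C\neq\emptyset$, then since $C\subseteq dB^d$ and $C'\subseteq \lambda dB^d + x$, the intersection condition forces $x$ to lie within distance $2\lambda d$ of the origin; more importantly, because $\lambda\geq 1$ and $B^d\subseteq C$, the set $C'$ contains a translate of $\lambda B^d \supseteq B^d$, i.e.\ $C'$ contains a ball of radius $1$. On the other hand, any point of $C'$ is within distance (roughly) $2d + 2\lambda d$ of $C$... but $\lambda$ is unbounded, so I cannot simply say $C'$ lives in a fixed region. The fix is standard: I only need a point of $P_C$ inside $C'$, and $C'$ already contains a unit ball centered at some point $y$; that unit ball, in turn, must meet the fixed region $W := (2d+1)B^d$ whenever $C'\cap C\neq\emptyset$, because $C\subseteq dB^d$ and $C'$ contains $y$ with $|y|$ not too large. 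Concretely, pick any $z\in C'\cap C$; then $|z|\leq d$, and the unit ball around $y$ inside $C'$ can be taken with $|y-z|\leq \operatorname{diam}(C') $—which is again unbounded—so instead I argue directly: $C'$ is convex and contains both $z$ (with $|z|\le d$) and a unit ball; hence $C'$ contains the convex hull of $z$ and that ball, which necessarily contains a unit ball centered at a point of distance at most $d+1$ from the origin (slide along the segment from $z$ toward the ball's center until you are at distance $d+1$, staying inside $C'$ by convexity). So every such $C'$ contains a unit ball centered in $W=(d+1)B^d$.

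Now I would define $P_C$ to be the centers of a finite set of balls of radius $1/2$ (or cubes of side $1/\sqrt d$) arranged so that every unit ball centered in $W$ contains at least one of them: take a cubic grid of side length $1/(2\sqrt d)$ restricted to $W$. Any unit ball centered at a point $p\in W$ contains a grid cube around $p$, hence contains a grid point; the number of grid points needed is $O_d(\operatorname{Vol}(W) \cdot (\sqrt d)^d) = O_d(1)$, i.e.\ at most some $c_1=c_1(d)$. Then $P_C$ hits every homothet $C'$ of $C$ with coefficient $\geq 1$ meeting $C$, as required. For the measure version—though the lemma as stated is purely combinatorial/geometric and does not mention $\mu$—the same $P_C$ works verbatim.

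The main obstacle, and the one subtlety worth being careful about in the writeup, is precisely the fact that the coefficient $\lambda$ is \emph{unbounded from above}, so one cannot confine $C'$ to a fixed compact set; the resolution is to observe that we only need $P_C\cap C'\neq\emptyset$, and since $C'$ with $\lambda\geq 1$ contains a translate of $B^d$ whose center (after using convexity to slide toward the known intersection point $z\in C'\cap C\subseteq dB^d$) can be taken at bounded distance from the origin, a fixed finite net in a fixed ball suffices. Everything else is a routine volume/packing count, and the uniformity of the constant over all $C$ comes entirely from Lemma~\ref{teo:fat}.
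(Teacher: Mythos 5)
Your overall strategy --- reduce to a $1/d$-fat body via Lemma \ref{teo:fat}, show that every relevant homothet contains a unit ball centered in a fixed bounded region, and then hit all such balls with a fixed grid --- is the same as the paper's, and both the affine reduction and the final grid count are fine. The gap is in the localization step. You correctly note that $C'=\lambda C+x$ contains the ball $\lambda B^d+x$ of radius $\lambda\ge 1$, but you then discard the radius and keep only ``$C'$ contains \emph{a} unit ball.'' From the two facts ``$C'\ni z$ with $|z|\le d$'' and ``$C'$ contains a unit ball somewhere'' one cannot conclude that $C'$ contains a unit ball centered within distance $d+1$ of the origin: the convex hull of $z$ and a unit ball $B(y,1)$ with $|y-z|=M$ large is an ice-cream cone whose inradius at a point $p$ of the axis near the apex $z$ is about $|p-z|/M$, which is arbitrarily small. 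So your parenthetical ``slide along the segment\dots staying inside $C'$ by convexity'' shows only that the \emph{point} at distance $d+1$ from the origin lies in $C'$, not that a unit ball around it does.

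The repair is to retain the radius $\lambda$: the convex hull of $z$ and $B(x,\lambda)\subseteq C'$ contains $B\bigl((1-t)x+tz,(1-t)\lambda\bigr)$ for all $t\in[0,1]$, and choosing $t=1-1/\lambda$ gives a unit ball whose center is at distance at most $\frac{1}{\lambda}|x-z|\le\frac{1}{\lambda}\cdot\lambda d=d$ from $z$ (since $z-x\in\lambda C\subseteq \lambda dB^d$), hence at most $2d$ from the origin. Equivalently --- and this is the paper's route --- observe that $C'$ contains a whole translate $C''$ of $C$ through the intersection point: writing $z=\lambda c+x$ with $c\in C$, the set $C''=C+(\lambda-1)c+x$ satisfies $C''\subseteq C'$ by convexity and $z\in C''\cap C$; then $C''$ meets $dB^d$, has diameter at most $2d$, so lies in $[-3d,3d]^d$, and contains a unit ball and hence an axis-parallel cube of side $2/\sqrt{d}$, which a $2/\sqrt{d}$-grid in $[-3d,3d]^d$ hits. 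With either repair the rest of your argument goes through unchanged.
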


\begin{proof}
Let $T$ be an affine transformation as in Lemma~\ref{teo:fat}. We begin by showing the result for $C_T=T(C)$. Every homothet $C_T'$ with $C_T'\cap C_T\neq\emptyset$ and coefficient at least $1$ contains a translate $C_T''$ of $C_T$ with $C_T''\cap C_T\neq\emptyset$; this translate satisfies $C_T''\subseteq dB^d+2dB^d\subset [-3d,3d]^d$. On the other hand, $B^d\subset C_T$, so $C_T''$ must contain a translate of an axis parallel $d$-hypercube of side $\frac{2}{\sqrt{d}}$. Now it is clear that we may take $P_{C_T}$ to be the set of points from a $\frac{2}{\sqrt{d}}$ grid\footnote{By a $\frac{2}{\sqrt{d}}$ grid we mean an axis parallel $d$-dimensional grid with separation $\frac{2}{\sqrt{d}}$ between adjacent points.} that lie in the interior of $[-3d,3d]^d$, and this grid may be chosen so that $|P_{C_T}|\leq (3d^{3/2})^d$. Setting $c_1(d)=(3d^{3/2})^d$ and $P_C=T^{-1}(P_{C_T})$ yields the result.
\end{proof}

Notice that the value $1$ plays no special role in the proof, the result still holds (with a posssibly larger $c_1$) if we wish for $P_C$ to hit every homothet whose coefficient is bounded from below by a positive constant. The construction used in the proof has the added benefit that it allows us to compute $P_{C}$ in constant time (for fixed $d$), so long as we know $T$.

Using some known results, it is possible to obtain better bounds for $c_1$. In fact, a probabilistic approach by Erd\H{o}s and Rogers \cite{erdosrogers} (see also \cite{hittingtranslates}) shows that we can take \[c_1(d)\leq3^{d+1}2^d\frac{d}{d+1}d(\log d+\log\log d+4)\] for all large enough $d$. See \cite{Hellyandrelatives} for some earlier bounds on $c_1(d)$.

Next, we prove Theorem~\ref{teo:nets}.

\begin{proof}
We show that $(S,\mathcal{H}_C|_S)$ admits a small weak $\epsilon$-net, the proof for $(\mu,\mathcal{H}_C)$ is analogous. The weak $\epsilon$-net $W$ is constructed by steps. Consider the smallest homothet $C'$ of $C$ which contains at least $\epsilon |S|$ points of $S$ and add the elements of the set $P_{C'}$, given by Lemma~\ref{teo:hit}, to $W$. Now, we forget about the points covered by $C'$ and repeat this procedure with the ones that remain until there are less than $\epsilon |S|$ points left. Since we pick at most $c_1$ points at each step, $|W|\leqslant c_1\frac{1}{\epsilon}$, so all that is left to do is show that $W$ is a weak $\epsilon$-net for $(S,\mathcal{H}_{C}|_S)$.

Let $C_{1}$ be an homothet with $C_{1}\cap S\geqslant\epsilon |S|$ and consider, along the process of constructing $W$, the first step at which the taken homothet contains at least one element of $S\cap C_{1}$, this homothet will be called $C_{2}$. Clearly, $C_{1}$ and $C_{2}$ have nonempty intersection and, since none of the points in $C_1$ had yet been erased when $P_{C_2}$ was added to $W$, $C_{1}$ is not smaller than $C_{2}$. It follows that $C_{1}$ contains at least one point of $P_{C_2}\subset W$, as desired. 
\end{proof}

As mentioned in the introduction, the technique from the last paragraph was first used by Kulkarni and Govindarajan~\cite{weaknets} to show that primal set systems induced by hypercubes and disks admit weak $\epsilon$-nets of size $O(\frac{1}{\epsilon})$.

We remark that if $c$ is a constant then it suffices to take, at each step, an homothet $C'$ that contains at least $\epsilon |S|$ points and its coefficient is at most $c$ times larger than the coefficient of the smallest homothet with that property, and then add to $W$ the set given by Lemma \ref{teo:hit} when $1$ is substituted by $1/c$. This observation will be important in Section \ref{chap:computational}.

\subsection{Covering finite sets and measures}\label{sec:covering}

At last, we state the main result about the asymptotic behavior of the function $f$ defined in Section \ref{sec:packcover}.

\begin{theorem}\label{teo:fbound}
Let $C\subset\mathbb{R}^d$ be a convex body. Then, for any positive integer $k$ and any non-$\frac{k}{2}/C$-degenerate finite set of points $S\subset\mathbb{R}^d$, we have that $f(C,k,S)=O(\frac{|S|}{k})$, where the hidden constant depends only on $d$. Similarly, for any positive real number $k$ and any $C$-nice measure, $f(C,k,\mu)=O(\frac{\mu(\mathbb{R}^d)}{k})$.
\end{theorem}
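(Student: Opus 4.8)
The plan is to build the covering greedily, removing at most $\frac{k}{2}$ points at a time while spending only $O_d(1)$ homothets per step, so that the total number of homothets is $O_d\!\left(\frac{|S|}{k}\right)$. The engine is Theorem~\ref{teo:nets} (small weak $\epsilon$-nets for homothets) applied with $\epsilon$ a suitable constant: I will take $\epsilon = \frac{1}{2}$. First I would observe that a weak $\frac{1}{2}$-net $W$ for $(S,\mathcal{H}_C|_S)$ of size $O_d(1)$ has the property that every homothet containing more than $\frac{|S|}{2}$ points of $S$ must contain a point of $W$. Now cover $W$: for each $w\in W$, take the largest homothet of $C$ centered appropriately (i.e. the largest homothet containing $w$) that still contains at most $\frac{k}{2}$ points of $S$; call it $C_w$. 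Since $|W| = O_d(1)$, the family $\{C_w : w\in W\}$ consists of $O_d(1)$ homothets, each a $\frac{k}{2}^-/S$-homothet (hence certainly a $k^-/S$-homothet), and together they cover $W$.

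The key claim is that after removing $S' := S \setminus \bigcup_{w\in W} C_w$ from $S$, we have $|S'| \le \frac{|S|}{2}$, so that iterating the construction on $S'$ halves the point count each round. To see this, suppose for contradiction $|S'| > \frac{|S|}{2}$. Because $S$ is non-$\frac{k}{2}/C$-degenerate, for each $w \in W$ the homothet $C_w$ is, by maximality, a homothet whose boundary carries at most $\frac{k}{2}$ points of $S$ while $C_w$ itself has at most $\frac{k}{2}$ points — and more importantly, any homothet strictly larger than $C_w$ (still containing $w$) must contain at least $\frac{k}{2}+1$ points of $S$; since no boundary of a homothet contains more than $\frac{k}{2}$ points, such a slightly larger homothet in fact contains at least $\frac{k}{2}+1$ points in its interior or, more carefully, strictly more points than $C_w$. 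The point is that $C_w$ is a \emph{maximal} $\frac{k}{2}^-/S$-homothet through $w$ in the sense that it cannot be enlarged while keeping $\le \frac{k}{2}$ points; the non-degeneracy hypothesis is exactly what lets us pass from "at most $\frac k2$ points" to a genuine obstruction when enlarging. I would then argue: since $|S'| > \frac{|S|}{2}$, consider a homothet $C^\ast$ of $C$ containing all of $S'$ that is ``small'' in the sense of being a homothet realizing this — actually I want a homothet with more than $\frac{|S|}{2}$ points that is small enough to be ``enlargeable from some $C_w$''. Here is the cleaner route: since $|S'| > \frac{|S|}{2} \ge \epsilon|S|$, by the weak-net property $S'$ is not fully separated from $W$ in the relevant sense — more precisely, the smallest homothet containing $\ge \frac12|S|$ points of $S'$ (which exists and sits inside a huge homothet, using $C$-niceness in the measure case) must meet $W$, say at $w_0$; but then this homothet contains $w_0$ and more than $\frac{k}{2}$ points of $S' \subseteq S\setminus C_{w_0}$, so it is a homothet through $w_0$ strictly larger than $C_{w_0}$ (as it contains points outside $C_{w_0}$) yet with... this is where I must be careful that ``more points'' forces ``strictly larger,'' which uses that $C_{w_0}$ already has $\le\frac k2$ points and the new homothet has $\ge\frac k2+1$, contradicting maximality of $C_{w_0}$ among $\frac k2^-$-homothets through $w_0$. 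That contradiction gives $|S'|\le\frac{|S|}{2}$.

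Granting the halving step, the recursion is immediate: let $N(n)$ be the number of homothets used on a set of $n$ points; then $N(n) \le O_d(1) + N\!\left(\frac n2\right)$ for $n > k$ (wait — the recursion should bottom out when the point count drops below $k$, at which point a single homothet, or $O_d(1)$ homothets via the net applied once more, finishes), which unrolls to $N(|S|) = O_d\!\left(\log\frac{|S|}{k}\right)$. That is \emph{not} yet $O_d(|S|/k)$, so the real design must be more aggressive: instead of one constant-size batch per round I should, within a single round, repeatedly pull out maximal $\frac{k}{2}^-$-homothets until fewer than $\frac k2$ points remain, charging $\frac k2$ removed points to each homothet — but the net is what guarantees we can always find such a homothet covering a net point while we still have many points, and the halving is the wrong accounting. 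The correct accounting: each selected $C_w$ either removes $\ge \frac k2$ points (charge $O_d(1)$ homothets to a batch of $\ge\frac k2$ points, giving $O_d(|S|/k)$ total once every batch is ``full''), and the weak-net argument guarantees that as long as $|S_{\mathrm{remaining}}| > \frac{k}{2}$ we can keep finding homothets through net points that are \emph{not} wasteful. So the plan is: iterate the net construction; in each iteration the $O_d(1)$ homothets collectively remove $\ge \frac{|S_{\mathrm{cur}}|}{2}$ points (by the claim above), so iteration $i$ has $|S_{\mathrm{cur}}| \le |S|/2^i$ but also removes $\ge |S|/2^{i+1}$ points using $O_d(1)$ homothets; summing, $\sum_i O_d(1)$ over $i$ up to $\log(|S|/k)$ gives $O_d(\log(|S|/k))$ homothets — still logarithmic. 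I evidently need the stronger statement that in \emph{each} round the homothets are charged against the points removed \emph{in that round} at rate $\Omega(k)$ points per homothet, which holds once $|S_{\mathrm{cur}}|$ is large but fails near the end; so the right split is: while $|S_{\mathrm{cur}}| \ge k$, run the net construction but observe each of the $O_d(1)$ homothets removes $\Theta(k)$ points \emph{because it is a maximal $\frac k2^-$-homothet and non-degeneracy forces it to actually contain close to $\frac k2$ points} — no: a maximal $\frac k2^-$-homothet can contain as few as... hmm, even one point if the next point is far. The genuine obstacle, and the thing I would spend the most care on, is precisely this: \textbf{controlling how many points each selected homothet removes}, i.e.\ showing the greedy net-based selection removes $\Omega_d(k)$ points per $O_d(1)$ homothets on average, not merely halving the set. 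I expect the resolution to be: take $C_w$ to be the \emph{smallest} homothet through $w$ containing $\ge \frac k2$ points (rather than the largest containing $\le \frac k2$), which by non-$\frac k2/C$-degeneracy contains exactly between $\frac k2$ and $k$ points (the boundary absorbing at most $\frac k2$ more), hence is a $k^-/S$-homothet removing $\ge \frac k2$ points; then apply the weak $\frac12$-net to certify that the union of such $C_w$ over $w\in W$ covers $> \frac{|S|}{2}$ points, so $O_d(1)$ homothets remove $> \frac{|S|}{2}$ points, and now the halving recursion $N(n) \le O_d(1) + N(n/2)$ combined with ``each level removes a constant fraction, and a $\frac k2^+$-to-$k^-$ homothet is used at each level'' — the total is $\sum_{i} O_d(1) = O_d(\log(|S|/k))$ UNLESS we note the removed points per homothet at level $i$ is $\ge \frac k2$ so the number of homothets at all levels combined is at most $\frac{|S|}{k/2} = \frac{2|S|}{k}$ by a global charging argument (every homothet we ever select removes $\ge \frac k2$ fresh points, and points are removed at most once), which immediately gives $N(|S|) \le \frac{2|S|}{k} = O_d(|S|/k)$ — and the weak-net / halving part is only needed to guarantee the process \emph{terminates} in $O_d(\log)$ rounds with $< k$ points left, finished by one last $O_d(1)$-homothet net batch. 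For the measure version, $C$-niceness supplies the bounding ball $K$, the weak $\epsilon$-net for $(\mu,\mathcal{H}_C)$ is given by the same theorem, ``exactly between $\frac k2$ and $k$'' becomes ``measure in $[\frac k2, k]$'' using non-$C$-degeneracy (measure vanishes on boundaries, so shrinking to the infimal homothet of measure $\ge \frac k2$ has measure exactly $\frac k2 \le k$), and the identical charging argument gives $f(C,k,\mu) \le \frac{2\mu(\mathbb{R}^d)}{k} + O_d(1) = O_d(\mu(\mathbb{R}^d)/k)$.
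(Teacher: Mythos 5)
Your proposal, after its several course corrections, settles on: (i) a weak $\tfrac12$-net $W$ of constant size per round; (ii) for each $w\in W$, the smallest homothet containing $w$ with at least $\tfrac k2$ points of the current set, which by non-degeneracy is a $k^-/S$-homothet containing at least $\tfrac k2$ points (this part is correct and matches the paper's definition of the $C_p$); and (iii) a global charging argument at rate $\tfrac k2$ fresh points per round. Two steps do not survive scrutiny. First, the ``halving'' claim is false: a weak $\tfrac12$-net only guarantees that every homothet containing at least half of the current points is \emph{hit}; it says nothing about the union $\bigcup_{w\in W}C_w$ \emph{covering} half of the points. Since $|W|=O_d(1)$ and each $C_w$ contains at most $k$ points, that union covers only $O_d(k)$ points, far fewer than $|S_{\mathrm{cur}}|/2$ when $|S_{\mathrm{cur}}|\gg k$ (think of many far-apart clusters of $k$ points). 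So the net at constant scale buys nothing, and your rounds reduce to ``greedily grab a homothet with at least $\tfrac k2$ uncovered points.'' Second, and fatally, the endgame: the charging argument runs only while at least $\tfrac k2$ points remain uncovered, and the leftover set of up to $\tfrac k2-1$ points cannot in general be finished with $O_d(1)$ further homothets --- nothing prevents the leftovers from being spread out so that any homothet containing two of them contains more than $k$ points of $S$, forcing one homothet per leftover point. That is up to $\Omega(k)$ extra homothets, which exceeds $O(|S|/k)$ whenever $|S|=o(k^2)$; already for $|S|=2k$ the theorem demands $O_d(1)$ homothets while your construction is only guaranteed to produce $O(k)$. (Your discarded first variant has an additional defect: ``the largest homothet containing $w$ with at most $\tfrac k2$ points'' need not exist, since arbitrarily large homothets through a point can avoid most of $S$.)

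The missing ingredient is the geometric lemma on which the paper's proof rests (Lemma~\ref{teo:neighborhoodcover}): if $\{C_p\}_{p\in P}$ are homothets of the form $p+\lambda_p C$ that all share a common point, then $O_d(1)$ of them already cover all of $P$. The paper runs the weak net at scale $\epsilon=\tfrac{k}{2|S|}$, so that $W$ has size $O_d(|S|/k)$ and pierces \emph{every} $C_p$, $p\in S$; each fiber $S_w=\{p\in S: w\in C_p\}$ is then covered by $O_d(1)$ of its own homothets via that lemma, giving $O_d(1)\cdot|W|=O_d(|S|/k)$ homothets covering all of $S$ in one shot, with no rounds and no endgame. (The Besicovitch-style greedy sketched at the end of Section~\ref{sec:pack} is closer in spirit to your charging idea, but there each step covers the \emph{entire} satellite set of points whose homothets meet the currently smallest one --- again via an $O_d(1)$-piercing statement, Claim~\ref{teo:satelliteconfig} --- which is precisely what guarantees that every point, including the stragglers, is eventually covered.) Without some lemma of this type your construction cannot be closed.
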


Again, we start by proving the result for point sets and then discuss the minor adaptations that must be made when working with measures.

As was essentially done in the proof of Lemma~\ref{teo:hit}, we may and will assume that $B^d\subseteq C\subseteq dB^d$. 

\begin{observation}\label{teo:tammes}
For any $d$ and any positive real $r$, there is a constant $c(d,r)$ with the following property: every set of points on $\mathbb{S}^{d-1}$ which contains no two distinct points at distance less than $r$ has at most $c(d,r)$ elements.
\end{observation}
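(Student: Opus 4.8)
The plan is to prove this by a standard volume-packing argument, so the "constant" will be completely explicit. Suppose $X\subset\mathbb{S}^{d-1}$ is a set of points no two of which lie at distance less than $r$ (distance here being the Euclidean distance inherited from $\mathbb{R}^d$, consistent with the notation $|\cdot|$ fixed in Section~\ref{sec:notation}). Around each $x\in X$ I would place the open ball $B(x,r/2)$. The separation hypothesis says exactly that $|x-y|\geq r$ for distinct $x,y\in X$, which is precisely the condition guaranteeing that the balls $\{B(x,r/2)\}_{x\in X}$ are pairwise disjoint. On the other hand, since every point of $\mathbb{S}^{d-1}$ is at distance $1$ from the origin, the triangle inequality gives $B(x,r/2)\subseteq B(O,1+r/2)$ for every $x\in X$.

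Comparing Lebesgue measures of the disjoint union on one side and the enclosing ball on the other,
\[
|X|\cdot\mathrm{Vol}\bigl(B(O,r/2)\bigr)=\sum_{x\in X}\mathrm{Vol}\bigl(B(x,r/2)\bigr)\leq\mathrm{Vol}\bigl(B(O,1+r/2)\bigr),
\]
and since $\mathrm{Vol}(B(O,\rho))$ scales like $\rho^{d}$ this yields
\[
|X|\leq\frac{(1+r/2)^{d}}{(r/2)^{d}}=\left(1+\frac{2}{r}\right)^{d}.
\]
Hence the observation holds with $c(d,r)=\bigl(1+\tfrac{2}{r}\bigr)^{d}$, which depends only on $d$ and $r$ as required.

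I do not expect any real obstacle here; the statement is a packing bound and the argument is entirely elementary, the only points to verify being the two containments/disjointness claims above, both immediate from the triangle inequality. One small remark worth including: if one instead reads "distance" on $\mathbb{S}^{d-1}$ as the geodesic distance, the same scheme works verbatim after replacing Euclidean balls by spherical caps and comparing their $(d-1)$-dimensional surface areas; alternatively, since a geodesic separation of $r$ forces a Euclidean separation of at least $2\sin(\min\{r,\pi\}/2)$, the bound above applies directly with $r$ replaced by that quantity. In any case the precise form of $c(d,r)$ is irrelevant for the applications in Section~\ref{sec:covering} and Section~\ref{sec:pack}; only its finiteness and its dependence on $d$ alone (for fixed $r$) will be used.
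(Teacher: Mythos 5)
Your argument is correct and is essentially the same as the paper's, which also dismisses the observation with a one-line packing/volume-counting bound (the paper compares $(d-1)$-dimensional cap areas on $\mathbb{S}^{d-1}$, obtaining $c(d,r)<\mathrm{vol}_{d-1}(\mathbb{S}^{d-1})/\bigl(\mathrm{vol}_{d-1}(B^{d-1})\,r^{d-1}\bigr)$, whereas you compare $d$-dimensional ball volumes in the ambient space, a cosmetic difference). Your explicit constant $c(d,r)=\left(1+\tfrac{2}{r}\right)^{d}$ is perfectly adequate for the uses in Sections~\ref{sec:covering} and~\ref{sec:pack}, where only finiteness for fixed $d$ and $r$ matters.
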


\begin{proof}
Obvious. A straightforward $(d-1)$-volume counting argument yields $$c(d,r)<\frac{\text{vol}_{d-1} (\mathbb{S}^{d-1})}{\text{vol}_{d-1}(B^{d-1})r^{d-1}}.$$
\end{proof}

Determination of the optimal values of $c(d,r)$ is often referred to as the Tammes problem. Exact solutions are only known in some particular cases, see~\cite{tammes} for some recent progress and further references.

The simple geometric lemma below will allow us construct the desired covering.

\begin{lemma}\label{teo:neighborhoodcover}
Let $P\subset\mathbb{R}^d$ be a (possibly infinite) bounded set and consider a collection of homothets $\lbrace C_p\rbrace_{p\in P}$ such that $C_p$ is of the form $p+\lambda C$ and $\bigcap_{p\in P} C_p\neq\emptyset$. Then there is a subset $P'$ of $P$ of size at most $c_2=c_2(d)$ such that the collection of homothets $\lbrace C_p\rbrace_{p\in P'}$ covers $P$.
\end{lemma}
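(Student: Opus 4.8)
\textit{Proof proposal.} The plan is to play the common point $q\in\bigcap_{p\in P}C_p$ against the fatness of $C$. Recall that throughout this section we assume $B^d\subseteq C\subseteq dB^d$, and after translating everything we may also assume $q$ is the origin. For $p\in P$ write $C_p=p+\lambda_p C$. Then $0\in C_p$ forces $-p/\lambda_p\in C\subseteq dB^d$, hence $\lambda_p>|p|/d$. Also $0\in B^d\subseteq C$ gives both $p\in C_p$ and $B(p,\lambda_p)=p+\lambda_p B^d\subseteq C_p$, so by convexity $C_p$ contains the ``ice\-cream cone'' $\operatorname{conv}(\{0\}\cup B(p,\lambda_p))\supseteq\bigcup_{t\in(0,1]}B(tp,t\lambda_p)$. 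A lower bound on each $\lambda_p$ together with this explicit convex subset of $C_p$ is the whole engine of the argument.

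First I would dispose of large homothets: let $R=\sup_{p\in P}|p|<\infty$. If some $p_0\in P$ has $\lambda_{p_0}\geq 2R$, then $C_{p_0}\supseteq B(p_0,2R)\supseteq B(0,R)\supseteq P$ and $P'=\{p_0\}$ works, so assume $\lambda_p<2R$ for all $p$. Next, cover $\mathbb{S}^{d-1}$ by $N\leq N(d)$ spherical caps of angular radius $\tfrac1{6d}$ (the bound on $N(d)$ follows from Observation~\ref{teo:tammes} applied to a maximal separated set of cap\-centers) and refine this to a partition into pieces $\Gamma_1,\dots,\Gamma_N$, each contained in one such cap; thus any two directions lying in a common $\Gamma_i$ subtend an angle at most $\tfrac1{3d}$. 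For each $i$ with $\{p\in P\setminus\{0\}:p/|p|\in\Gamma_i\}\neq\emptyset$, set $s_i=\sup\{|p|:p\in P\setminus\{0\},\ p/|p|\in\Gamma_i\}$ and choose $a_i\in P$ with $a_i/|a_i|\in\Gamma_i$ and $|a_i|\geq s_i/(1+\tfrac1{3d})$. I claim $C_{a_i}$ covers every $p\in P$ with $p/|p|\in\Gamma_i$; granting this, $P'$ is the set of these $a_i$ together with one arbitrary anchor in case $0\in P$, so $|P'|\leq N(d)+1=:c_2(d)$.

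To prove the claim, fix such a $p$, write $u=p/|p|$, $v=a_i/|a_i|$, and let $\psi$ be the angle between $u$ and $v$, so $\psi\leq\tfrac1{3d}$ and $|u-v|=2\sin(\psi/2)\leq\psi$. If $|p|\leq|a_i|$, put $t=|p|/|a_i|\in(0,1]$; then $ta_i=|p|\,v$, so $|p-ta_i|=|p|\,|u-v|\leq|p|\,\psi<(|p|/|a_i|)\lambda_{a_i}=t\lambda_{a_i}$, the strict inequality because $\psi\leq\tfrac1{3d}<\tfrac1d<\lambda_{a_i}/|a_i|$; hence $p\in B(ta_i,t\lambda_{a_i})\subseteq C_{a_i}$. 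If $|p|>|a_i|$, then $|p|\leq s_i\leq(1+\tfrac1{3d})|a_i|$, so $|p|-|a_i|\leq|a_i|/(3d)$, and taking $t=1$, $|p-a_i|\leq|p|\,|u-v|+(|p|-|a_i|)\leq|p|\,\psi+|a_i|/(3d)\leq\tfrac{7}{9d}|a_i|<|a_i|/d<\lambda_{a_i}$; hence $p\in B(a_i,\lambda_{a_i})\subseteq C_{a_i}$. Either way $p\in C_{a_i}$, which proves the claim and with it the lemma.

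The main obstacle is precisely this last geometric claim: one must balance three parameters — the angular width of the cones $\bigl(\tfrac1{6d}\bigr)$, the multiplicative slack $\bigl(1+\tfrac1{3d}\bigr)$ permitted when picking a near\-farthest anchor in a cone, and the fatness ratio $\bigl(1/d\bigr)$ — so that even the anchors lying slightly \emph{farther} from $q$ than the chosen $a_i$ still fall inside $C_{a_i}$. The ice\-cream\-cone description of $C_p$ and the inequality $\lambda_p>|p|/d$ are exactly what make this calibration go through, and allowing a multiplicative slack rather than insisting on an exact maximiser is what lets the argument survive the case in which $s_i$ is not attained (which can happen, since $P$ may be infinite). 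One could extract an explicit value of $c_2(d)$ by tracking constants through Observation~\ref{teo:tammes}, but the crude $c_2(d)=N(d)+1$ suffices for the applications.
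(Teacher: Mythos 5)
Your proof is correct and rests on the same two ingredients as the paper's: the fatness-derived cone $\operatorname{conv}(\{O\}\cup B(p,\lambda_p))\subseteq C_p$ together with the lower bound $\lambda_p\geq|p|/d$, and a bounded number of angular sectors supplied by Observation~\ref{teo:tammes}. The only difference is organizational — you partition the directions a priori and verify directly that one near-farthest anchor per sector suffices, whereas the paper greedily selects near-farthest uncovered points and derives a contradiction from Observation~\ref{teo:tammes} if more than $c(d,t)$ are ever chosen — so both arguments yield the same $O_d(1)$ bound by the same mechanism.
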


\begin{proof}
Take $c_2(d)=c(d,t)$ (as in the claim above) for some sufficiently small $t=t(d)$ to be chosen later. After translating, we may assume that $O\in\bigcap_{p\in P} C_p$. We construct $P'$ by steps, starting from an empty set. At each step, denote by $N$ the supremum of the Euclidean norms of the elements of $P$ that are yet to be covered by $\lbrace C_p\rbrace_{p\in P'}$, and add to $P'$ an uncovered point with norm at least $(1-\frac{1}{10d})N$. The process ends as soon as $P\subset\bigcup_{p\in P'} C_p$, we show that this takes no more than $c_2$ steps. Suppose, for the sake of contradiction, that after some number of steps we have $|P'|>c_2$ and let $P'_{unit}=\lbrace\frac{p}{|p|}\ |\ p\in P'\rbrace$. By Observation~\ref{teo:tammes} there are two  distinct points $\frac{p_1}{|p_1|},\frac{p_2}{|p_2|}\in P'_{unit}$ (with $p_1,p_2\in P'$) at distance less than $t$ from each other. Say, w.l.o.g., that $p_1$ was added to $P'$ prior to $p_2$; it follows from the construction that $|p_1|>(1-\frac{1}{10d})|p_2|$. Since $C_{p_1}$ is $1/d$-fat and contains $O$, the ball with center $p_1$ and radius $\frac{|p_1|}{d}$ lies completely within said homothet. Now, by convexity, $C_p$ must contain a bounded cone with vertex $O$, base going trough $p_1/(1-\frac{1}{10d})$, and whose angular width depends only on $d$. It follows that if $t$ is small enough then $p_2$ lies within this cone and is thus contained in $C_{p_1}$ (see figure \ref{fig:1}). This contradicts the assumption that $p_2$ was added after $p_1$, and the result follows.
\end{proof}

\begin{figure}[!htbp]
\centering
\includegraphics[scale=0.4]{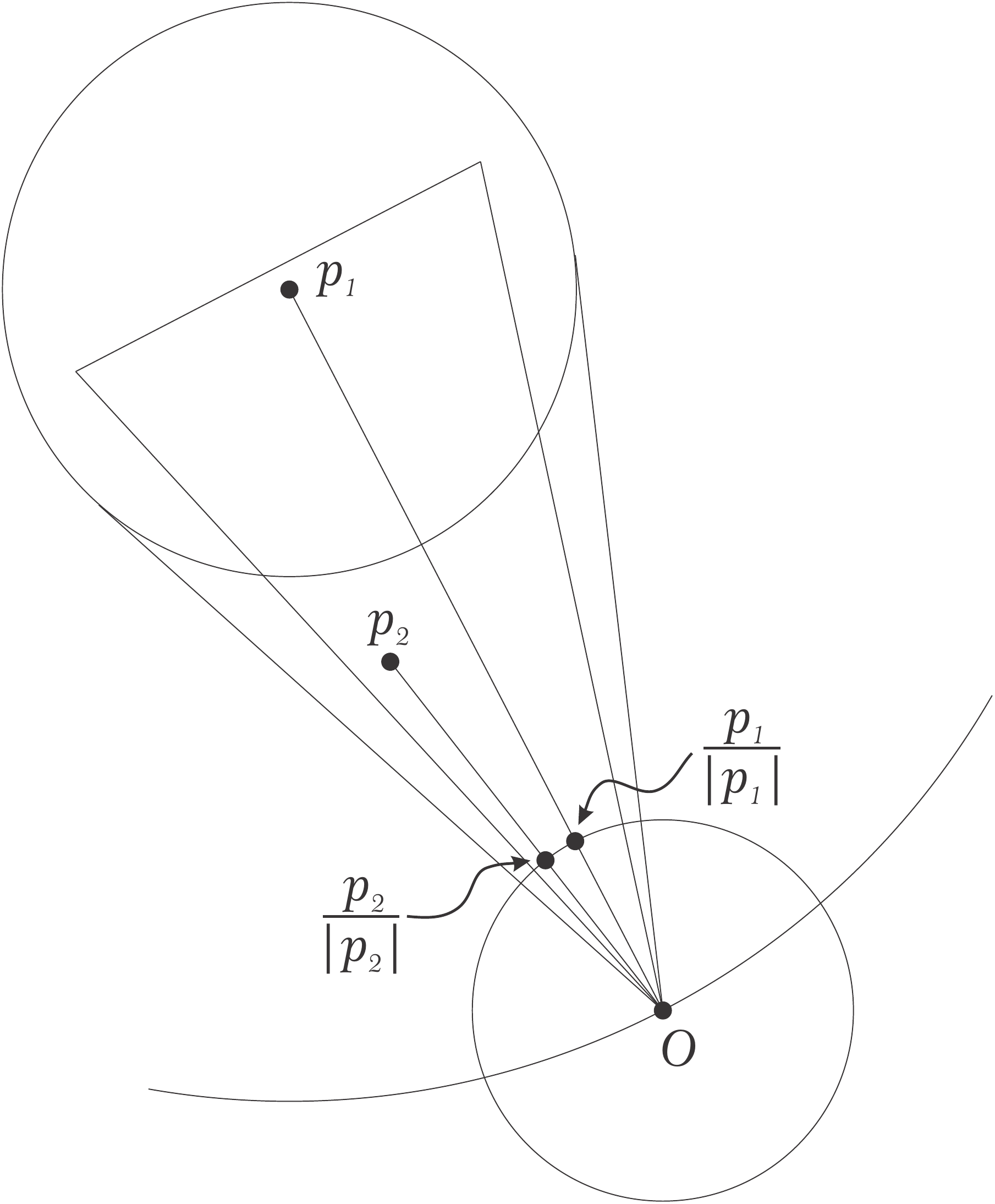}
\caption{The point $p_2$ is contained in a cone which lies completely inside $C_{p_1}$.}
\label{fig:1}
\end{figure}

We remark that the above result can easily be derived from the work of Nasz{\'o}di et al. \cite{homothetarrangements} (see also \cite{besicovitchconstant,exponentiallowerbound}).

Now we present the proof of Theorem~\ref{teo:fbound}.

\begin{proof}
We assume that $\frac{|S|}{k}\geq 1$. For every $p\in S$, let $C_{p}$ be the smallest homothet of the form $\lambda C+p$ which covers more than $\frac{k}{2}$ points of $S$ (it exists, since $C$ is closed and for any sufficiently large $\lambda$ the homothet $\lambda C+p$ covers $|S|>\frac{k}{2}$ points). Since the boundary of $C_p$ contains at most $\frac{k}{2}$ points, a slightly smaller homothet, also of the form $\lambda C+p$, will cover at least $|C_p\cap S|-\frac{k}{2}$ but at most $\frac{k}{2}$ points. It follows that $|C_p\cap S|\leqslant k$, that is, $C_p$ is a $k^-/S$-homothet. Let $C_S=\lbrace C_p\text{ }|\text{ } p\in S\rbrace$ and consider a weak $\frac{k}{2|S|}$-net $W$ for $(S,\mathcal{H}_C|_S)$ of size $O(\frac{|S|}{k/2})=O(\frac{|S|}{k})$, as given by Theorem~\ref{teo:nets}. $W$ hits every homothet of $C$ which covers at least $\frac{k}{2}$ elements of $S$ so, in particular, it hits all homothets in $C_S$. We will use Lemma~\ref{teo:neighborhoodcover} to construct the desired cover using elements of $C_S$. For each $w\in W$ let $S_w=\lbrace s\in S\ |\ w\in C_s\rbrace$. The point set $S_w$ and the homothets $\lbrace C_p\rbrace_{p\in S_w}$ satisfy the properties required in the statement of Lemma~\ref{teo:neighborhoodcover}, so there is a subset $S_w'\subset S_w$ of size at most $c_2$ such that the collection of homothets $\lbrace C_p\rbrace_{p\in S_w'}$ covers $S_w$. Let $S_C=\bigcup_{w\in W} S_w'$, we claim that the collection of $k^-/S$-homothets $\lbrace C_p\rbrace_{p\in S_C}$ covers $S$. Indeed, for every $p\in S$, $C_p$ is hit by some element of $w$, whence $s\in S_w$ and $s\in\bigcup_{p\in S_w'} C_p\subset\bigcup_{s\in S_C} C_p$. Furthermore, $|S_C|\leqslant c_2|W|=O_d(\frac{|S|}{k})$, as desired.

Now, suppose that $\mu$ is $C$-nice and $K$ is a ball with $\mu(K)=\mu(\mathbb{R}^d)\geq k$. For every $p\in K$, let $C_p$ be an homothet of the form $\lambda C+p$ which has measure $k$ (again, it exists, since $\mu$ is not-$C$-degenerate and $\mu(K)\geq k$). Let $C_\mu=\lbrace C_p\text{ }|\text{ } p\in K\rbrace$ and consider a weak $\frac{k}{\mu(\mathbb{R}^d)}$-net for $(\mu,\mathcal{H_C})$ of size $O(\frac{\mu(\mathbb{R}^d)}{k})$. From here, we can follow the argument in the above paragraph to find a collection of $O_d(\frac{\mu(\mathbb{R}^d)}{k})$ $k^-/\mu$-homothets (in fact, of homothets of measure exactly $k$) which cover $K$. This concludes the proof.
\end{proof}

We remark that the result still holds if, instead of being non-$\frac{k}{2}/C$-degenerate, $S$ is non-$tk/C$-degenerate for some fixed $t\in(0,1)$. In fact, this condition can be dropped altogether in the case that $C$ is strictly convex. The implicit requirement that $\mu$ be non-$C$-degenerate could also be weakened, all that is needed is for no boundary of an homothet to have measure larger than $tk$ (again, for fixed $t\in(0,1)$).

The proof of Theorem~\ref{teo:fbound} (as well as Theorem \ref{teo:nets}) extends almost verbatim to weighted point sets. In the weighted case, the homothets are allowed to cover a collection of points with total weight at most $k$, and the result tells us that, as long as no boundary of an homothet contains points with total weight larger than $\frac{k}{2}$, $S$ can be covered using $O_d(\frac{w(S)}{k})$ such homothets, where $w(S)$ denotes the total weight of the points in $S$.

\subsection{Generalized covering density}\label{sec:coverdensity}

\begin{theorem}\label{teo:coverdensity}
Let $C\subset\mathbb{R}^d$ be a convex body and $\mu$ a non-$C$-degenerate measure such that $\mu(C)>0$ and $\mu(\mathbb{R})=\infty$. Then $\delta_H(\mu,C)$ is bounded from above by a function of $d$.
\end{theorem}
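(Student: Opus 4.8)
Here $\delta_H(\mu,C)$ is to be read as the density studied in this section on generalized \emph{covering} density: the infimum, over all coverings $\mathcal{C}$ of $\mathbb{R}^d$ by homothets of $C$ of $\mu$-measure exactly $\mu(C)$, of the lower density $d_{\text{low}}(\mu,\mathcal{C})$ from Section~\ref{sec:packcover} (it is $\Theta_H(\mu,C)$ in that section's notation, and its finiteness is exactly what follows from the covering estimate of Theorem~\ref{teo:fbound}). So the plan is to exhibit one such covering whose lower density is $O_d(1)$. I would first record a continuity remark: for every $y\in\mathbb{R}^d$ the map $\lambda\mapsto\mu(\lambda C+y)$ is nondecreasing, tends to $0$ as $\lambda\to 0^+$, tends to $\mu(\mathbb{R}^d)=\infty$ as $\lambda\to\infty$, and is continuous, since its jump at any $\lambda$ is the $\mu$-measure of a portion of the boundary $\partial(\lambda C+y)$, which vanishes by non-$C$-degeneracy. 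By the intermediate value theorem every point is then the homothety center of a homothet of $C$ of any prescribed positive $\mu$-measure, and any homothet of $\mu$-measure at most $\mu(C)$ can be grown — with its homothety center fixed, so that it only enlarges — into one of $\mu$-measure exactly $\mu(C)$.

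Next I would localize by means of Theorem~\ref{teo:fbound}. Cut $\mathbb{R}^d$ into the dyadic shells $S_0=B^d$ and $S_n=\{x:2^{n-1}\leq|x|<2^n\}$ for $n\geq 1$. For each $n$ with $\mu(S_n)\geq\mu(C)$ the restriction $\mu_n=\mu|_{S_n}$ is $C$-nice — it is finite, non-$C$-degenerate, and has all of its mass in the ball $2^nB^d\supseteq S_n$ — so Theorem~\ref{teo:fbound} with $k=\mu(C)$ yields a family $\mathcal{F}_n$ of $O_d(\mu(S_n)/\mu(C))$ homothets of $C$ covering $S_n$, and, following its proof, each member may be taken centered at a point of $S_n$. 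Granting for the moment that $\mathcal{F}_n$ can be arranged to consist of homothets of \emph{global} $\mu$-measure exactly $\mu(C)$ with $|\mathcal{F}_n|=O_d(\mu(S_n)/\mu(C)+1)$ — the ``$+1$'' also taking care of the finitely many shells with $\mu(S_n)<\mu(C)$ — put $\mathcal{C}=\bigcup_{n\geq 0}\mathcal{F}_n$, a covering of $\mathbb{R}^d$. Any member of $\mathcal{F}_n$ meets $S_n$, hence contains a point at distance $\geq 2^{n-1}$ from the origin, so it can be contained in $2^mB^d$ only if $n\leq m$; therefore
\[
\big|\{H\in\mathcal{C}:H\subseteq 2^mB^d\}\big|\ \leq\ \sum_{n=0}^{m}|\mathcal{F}_n|\ =\ O_d\!\left(\frac{1}{\mu(C)}\sum_{n=0}^{m}\mu(S_n)\right)\ =\ O_d\!\left(\frac{\mu(2^mB^d)}{\mu(C)}\right),
\]
using the telescoping identity $\sum_{n\leq m}\mu(S_n)=\mu(2^mB^d)$. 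Since every member of $\mathcal{C}$ has $\mu$-measure exactly $\mu(C)$, this gives $d_{\text{inn}}(\mu,\mathcal{C}\,|\,2^mB^d)=O_d(1)$ for every $m$, and hence $d_{\text{low}}(\mu,\mathcal{C})=\liminf_{r\to\infty}d_{\text{inn}}(\mu,\mathcal{C}\,|\,rB^d)=O_d(1)$ (it suffices to read the liminf along $r=2^m$). All constants used are those of Theorem~\ref{teo:fbound} and Lemma~\ref{teo:neighborhoodcover}, so they depend on $d$ alone.

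The step that needs genuine work — and the one I expect to be the main obstacle — is producing the families $\mathcal{F}_n$ from homothets of \emph{exact} global $\mu$-measure $\mu(C)$. Applying Theorem~\ref{teo:fbound} to $\mu_n$ gives homothets $H$ with $\mu(H\cap S_n)=\mu(C)$, so $\mu(H)\geq\mu(C)$, with equality only when $H$ barely leaves $S_n$; when $\mu(H)>\mu(C)$ (that is, $H$ dips into neighbouring shells that carry positive mass) it must be replaced, without uncovering anything, by a true measure-$\mu(C)$ homothet. The route I would take is to rerun the weak-net-plus-Lemma~\ref{teo:neighborhoodcover} argument behind Theorem~\ref{teo:fbound} directly on $S_n$, but using for each $p\in S_n$ the smallest homothet $C_p$ centered at $p$ with \emph{global} $\mu(C_p)=\mu(C)$ (which exists by the continuity remark), together with a weak $\tfrac{\mu(C)}{\mu(B'_n)}$-net for $(\mu|_{B'_n},\mathcal{H}_C)$, where $B'_n$ is a ball containing every such $C_p$. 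The content is then to check that this net has size $O_d(\mu(S_n)/\mu(C)+1)$, equivalently that $\mu(B'_n)=O_d(\mu(S_n))$ up to the additive constant: when the $\mu$-mass near $S_n$ is not too thin the $C_p$ stay inside a bounded dilate of $2^nB^d$ and this is routine, but when that mass is extremely thin the $C_p$ become geometrically large, and one then needs a separate argument that in this regime $S_n$ is nevertheless coverable by $O_d(1)$ homothets of measure exactly $\mu(C)$, the delicate issue being that such a homothet must be large enough to help cover the shell yet small enough not to swallow so much mass from the inner shells that its measure exceeds $\mu(C)$. Making this trade-off precise is, I believe, the crux; the remainder is the bookkeeping sketched above.
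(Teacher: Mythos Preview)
You have isolated the right obstruction: a cover produced by Theorem~\ref{teo:fbound} applied to a restricted measure may contain homothets whose \emph{global} $\mu$-measure exceeds $\mu(C)$, and these must be replaced without losing control of the count. But you do not resolve it, and the fixed dyadic shelling you propose makes it genuinely hard. When $\mu(S_n)$ is tiny (even zero) for a long run of consecutive $n$, the exact-measure homothets $C_p$ you want to use, centered at $p\in S_n$, are forced to reach far into the inner region to collect mass $\mu(C)$; there is no a priori reason they avoid swallowing more, nor that $O_d(1)$ of them cover $S_n$. Your ``$+1$'' does not settle this: the number of thin shells up to radius $2^m$ can be essentially $m$, while $\mu(2^mB^d)$ can stay bounded over that range, so the additive constants do not telescope away. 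The delicate trade-off you flag at the end is exactly the missing step, and it is not a detail.

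The paper bypasses this by two ideas you do not have. First, it proves a local repair lemma (Lemma~\ref{teo:fixbadhom}): working with $k=\tfrac12\mu(C)$ inside a ball $B$, any homothet $C'$ of the cover with $\mu(C')>\mu(C)$ satisfies $\mu|_B(C')<\mu(C)$ and $C'\not\subset B$, and one shows that $C'\cap B$ can be covered by \emph{finitely many} homothets of measure exactly $\mu(C)$, \emph{none of which is contained in} $B$. Second, and crucially, the radii are chosen \emph{adaptively}: after the cover $\mathcal{F}_t$ of $\lambda_tB^d$ is built (including the possibly uncontrolled number of repair homothets), pick $\lambda_{t+1}$ so large that $\mu(\lambda_{t+1}B^d)\geq \mu(C)\,|\mathcal{F}_t|/c_{f,d}$. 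Then the old repair homothets are swallowed by the new budget, and since every repair homothet sticks out of its ball, the count of members of the final cover contained in any $\lambda_iB^d$ is at most $2c_{f,d}\,\mu(\lambda_iB^d)/\mu(C)$. The liminf is then read along the sequence $\lambda_i$. Your rigid shells give you no such freedom: the adaptive choice of scales is precisely the device that absorbs the uncontrolled finitely-many from the repair step.
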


\begin{proof}
For any Borel set $K\subset\mathbb{R}^d$ the \textit{restriction of $\mu$ to $K$}, $\mu|_K$, is defined by $\mu|_K(X)=\mu(X\cap K)$. Notice that if $K$ is bounded then $\mu|_K$ is $C$-nice.

At a high level, our strategy consists of choosing an infinite sequence of positive reals, $\lambda_0<\lambda_1<\lambda_2<\dots$, and constructing covers with homothets of measure $\mu(C)$ of each of the bounded regions $\lambda_0B^d$, $\lambda_1B^d\backslash\lambda_0B^d$, $\lambda_2B^d\backslash\lambda_1B^d,\dots$ using Theorem \ref{teo:fbound} so that the union of these covers has bounded lower density with respect to $\mu$. To be entirely precise, $\lambda_{i+1}$ will not be chosen until after the cover of $\lambda_{i}B^d\backslash\lambda_{i-1}B^d,\dots$ has been constructed. The main difficulty that arises is that, after applying Theorem \ref{teo:fbound} to the restriction of $\mu$ to a bounded set, some of the homothets in the resulting cover may have measure (with respect to $\mu$) larger than $\mu(C)$. Below, we describe a process that allows us to circumvent this issue. Here, the importance of defining $d_\text{low}$ as we did (back in Section \ref{sec:packcover}) will be clear. 

Choose $\lambda_0>0$ such that $\mu(\lambda_0B^d)\geq\mu(C)$ and set $\lambda_0B^d=\lambda_0B^d$. Theorem \ref{teo:fbound} tells us that $f(C,\frac{\mu(C)}{2},\mu|_{\lambda_0B^d})\leq c_{f,d}\frac{\mu(\lambda_0B^d)}{\mu(C)}$, so $\lambda_0B^d$ can be covered using no more than $c_{f,d}\frac{\mu(\lambda_0B^d)}{\mu(C)}$ homothets of $C$ which have measure at most $\frac{\mu(C)}{2}$ with respect to $\mu|_{\lambda_0B^d}$. In fact, if all of them were $\mu(C)^-/\mu$-homothets we could apply a dilation to each so that every one had measure $\mu(C)$ with respect to $\mu$. The following lemma shows that any homothet of the cover whose measure is too large with respect to $\mu$ can be substituted by a finite number of $\mu(C)^-/\mu$-homothets which are not completely contained in $\lambda_0B^d$.

\begin{lemma}\label{teo:fixbadhom}
Let $B\subset\mathbb{R}^d$ be a ball with $\mu(B)\geq\mu(C)$ and $C'$ be an homothet of $C$ such that $\mu|_{B}(C')<\mu(C)$ but $C'\not\subset B$. Then $C'\cap B$ can be covered by a finite collection of $\mu(C)^-/\mu$-homothets of $C$, none of which is fully contained in $B$.
\end{lemma}

\begin{proof}
Of course, we may assume that $C'\cap B\neq\emptyset$ and $\mu(C')>\mu(C)$. Let $C''$ be an homothet with $\mu|_B(C')<\mu|_B(C'')<\mu(C)$ that results from applying dilation to $C'$ with center in its interior; clearly, $C'\subsetneq C''$ and $\mu(C'')>\mu(C)$. Now, let $\overline{B}$ denote the closure of $B$ and, for each $p\in C'\cap\overline{B}$, consider an homothet $C_p$ with $\mu(C_p)=\mu(C)$ that is obtained by applying a dilation to $C''$ with center $p$. Since $\mu(C'')>\mu(C)$ and $p$ lies in the interior of $C''$, $C_p\subsetneq C''$ and $p$ belongs to the interior of $C_p$ (see figure \ref{fig:2}). We claim that $C_p$ is not fully contained in $B$. Indeed, if it were, we would have $C_p\subset B\cap C''$, but $\mu(B\cap C'')=\mu|_B(C'')<\mu(C)$, which contradicts the choice of $C_p$. Thus, for each point $p\in C'\cap\overline{B}$, $C_p$ has measure $\mu(C)$ with respect to $\mu$, it is not completely contained in $B$, and it covers an open neighborhood of $p$. The result now follows from the fact that $C'\cap\overline{B}$ is compact.
\end{proof}

\begin{figure}[!htbp]
\centering
\includegraphics[scale=0.42]{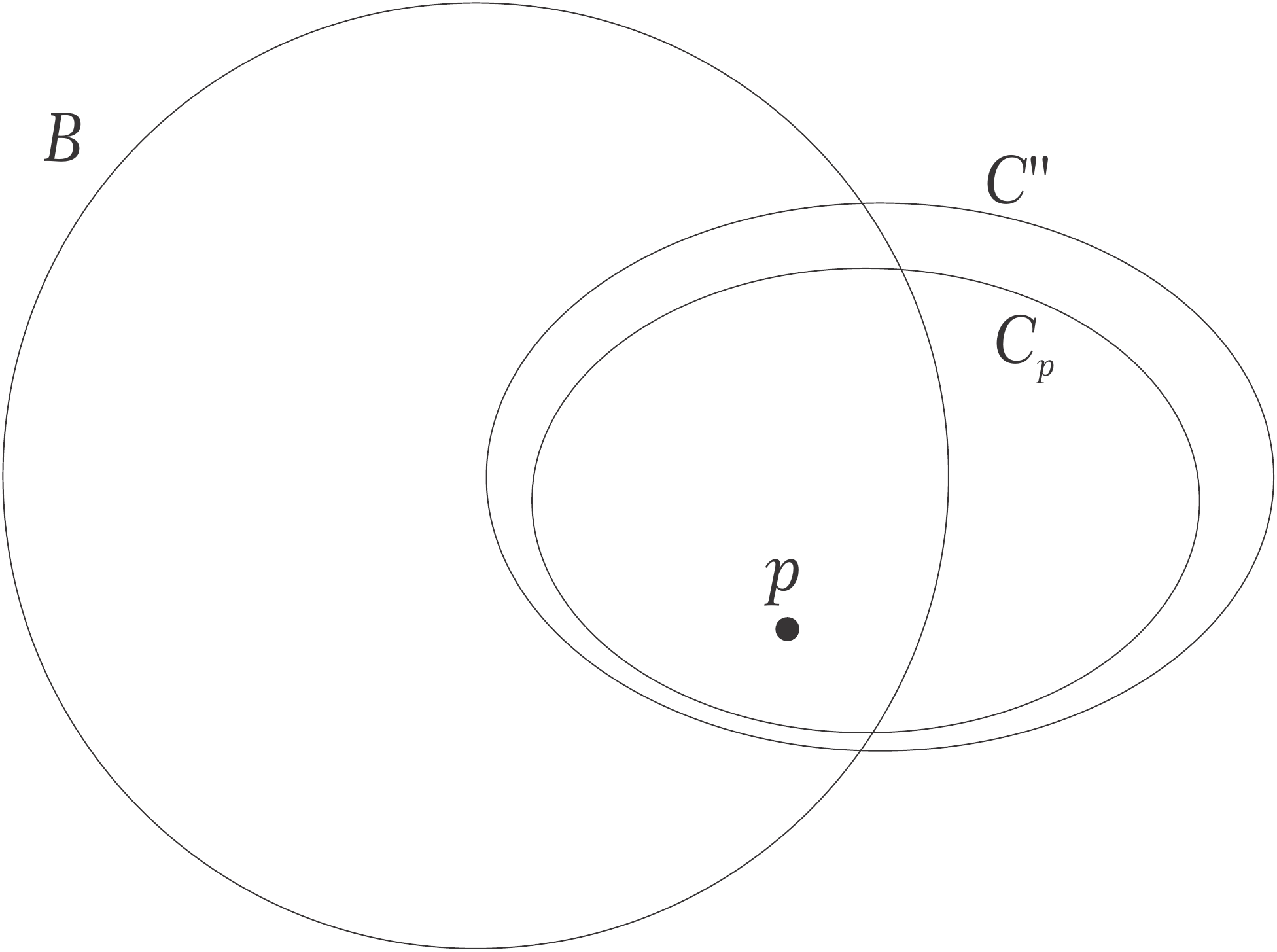}
\caption{Configuration in the proof of Lemma \ref{teo:fixbadhom}.}
\label{fig:2}
\end{figure}

Apply Lemma \ref{teo:fixbadhom} (with $B=\lambda_0B^d$) to each of the aforementioned homothets and then enlarge each homothet in the cover until its measure with respect to $\mu$ is $\mu(C)$. This way, we obtain a finite cover $\mathcal{F}_0$ of $\lambda_0B^d$ by homothets of measure $\mu(C)$ with respect to $\mu$, of which at most $c_{f,d}\frac{\mu(\lambda_0B^d)}{\mu(C)}$ are fully contained in $\lambda_0B^d$. 

Now, suppose that $\lambda_0<\lambda_1<\dots<\lambda_t$ have already been chosen so that there is a finite family $\mathcal{F}_t$ of homothets of measure $\mu(C)$ with respect to $\mu$ that covers $\lambda_iB^d$ and has the following property:  at most $2c_{f,d}\frac{\mu(\lambda_iB^d)}{\mu(C)}$ of the homothets are fully contained in $\lambda_iB^d$ for every $i\in\{0,1,\dots,t\}$.

Chose $\lambda_{t+1}$ so that $2\lambda_t<\lambda_{t+1}$ and $\mu(\lambda_{i+1}B^d)\geq\frac{\mu(C)|\mathcal{F}_t|}{c_{f,d}}$ (the condition $\mu(\mathbb{R}^d)=\infty$ is crucial here). By Theorem \ref{teo:fbound}, $f(C,\frac{\mu(C)}{2},\mu|_{\lambda_{t+1}B^d})\leq c_{f,d}\frac{\mu(\lambda_{t+1}B^d)}{\mu(C)}$; consider a cover that achieves this bound. Again by Lemma \ref{teo:fixbadhom}, each homothet in the cover with measure larger than $\mu(C)$ with respect to $\mu$ can be substituted by a finite collection of homothets of measure at most $\mu(C)$ which are not fully contained in $\lambda_{t+1}B^d$, so that the homothets still cover $\lambda_{t+1}B^d$. After having carried out these substitutions, we enlarge each homothet in the cover so that it has measure $\mu(C)$ with respect to $\mu$ and then remove all homothets which are fully contained in $\lambda_tB^d$. The resulting family of homothets, which we denote by $F_{t+1,\text{outer}}$, covers $\lambda_{t+1}B^d\backslash\lambda_tB^d$ and contains at most $c_{f,d}\frac{\mu(\lambda_{t+1}B^d)}{\mu(C)}$ homothets that lie completely inside $\lambda_{t+1}B^d$. Let  $\mathcal{F}_{t+1}=\mathcal{F}_t\cup\mathcal{F}_{t+1,\text{outer}}$. $F_{t+1}$ is a cover of $\lambda_tB^d\cup\lambda_{t+1}B^d\backslash\lambda_tB^d=\lambda_{t+1}B^d$ that consists of homothets of measure $\mu(C)$ with respect to $\mu$. Since no element of $\mathcal{F}_{t+1,\text{outer}}$ is a subset of $\lambda_tB^d$, there are no more than $2c_{f,d}\frac{\mu(\lambda_iB^d)}{\mu(C)}$ homothets fully contained in $\lambda_iB^d$ for every $i\in\{0,1,\dots,t\}$ and there are also no more than $|\mathcal{F}_t|+c_{f,d}\frac{\mu(\lambda_{t+1}B^d)}{\mu(C)}\leq2c_{f,d}\frac{\mu(\lambda_{t+1}B^d)}{\mu(C)}$ homothets contained in $\lambda_{t+d}B^d$.

Repeating this process, we obtain a sequence $\lambda_0<\lambda_1<\dots$ that goes to infinity and a sequence $\mathcal{F}_0\subset\mathcal{F}_1\subset\dots$ of collections of homothets of measure $\mu(C)$ with respect to $\mu$. Set $\mathcal{F}=\cup_{i=0}^\infty\mathcal{F}_i$, then $\mathcal{F}$ is a cover of $\mathbb{R}^d$ with homothets of measure $\mu(C)$ and, for  $i=0,1,\dots$, we have that \[d_{inn}(\mu,\mathcal{F}|\lambda_iB^d)=\frac{1}{\mu(\lambda_iB^d)}\sum_{C'\in\mathcal{F},C'\subset \lambda_iB^d}\mu(C')\leq\frac{1}{\mu(\lambda_iB^d)}\frac{2c_{f,d}\mu(\lambda_{i}B^d)}{\mu(C)}\mu(C)=2c_{f,d},\] hence
\[d_\text{low}(\mu,\mathcal{F})=\liminf_{r\rightarrow\infty}d_{\text{inn}}(\mu,\mathcal{F}|rB^d)\leq2c_{f,d},\] and the result follows. 
\end{proof}

Just as in the previous section, the result still holds as long as no boundary of an homothet has measure larger than $tk$ for some fixed $t\in(0,1)$. Our argument can also be slightly modified to yield a cover with lower density at most $(1+\epsilon)c_{f,d}$ for any $\epsilon>0$, this implies that $\Theta_H(\mu,C)\leq c_{f,d}$ (recall that $c_{f,d}$ is the hidden constant in Theorem \ref{teo:fbound}). 

\section{Packing}\label{chap:pack}

\subsection{Packing in finite sets and measures}\label{sec:pack}

\begin{theorem}\label{teo:gbound}
Let $C\subset\mathbb{R}^d$ be a convex body. Then, for any positive integer $k$ and
any non-$\frac{k}{2}/C$-degenerate finite set of points $S\subset\mathbb{R}^d$, we have that $g(C,k,S)=\Omega_d(\frac{|S|}{k})$, where the hidden constant depends only on $d$. Similarly, for any positive real number $k$ and any $C$-nice measure, $g(C,k,\mu)=\Omega(\frac{\mu(\mathbb{R}^d)}{k})$.
\end{theorem}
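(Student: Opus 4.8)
The plan is to mirror the structure of the proof of Theorem~\ref{teo:fbound}, but dualizing from covering to packing. As before, by Lemma~\ref{teo:fat} we may assume $B^d\subseteq C\subseteq dB^d$, and we may assume $\frac{|S|}{k}\geq 1$ since otherwise the statement is trivial. For each $p\in S$ let $C_p$ be the smallest homothet of the form $\lambda C + p$ that covers more than $\frac{k}{2}$ points of $S$; exactly as in the covering proof, the non-$\frac{k}{2}/C$-degeneracy of $S$ guarantees that $C_p$ is in fact a $k^+/S$-homothet which also contains more than $\frac{k}{2}$ points (so $\frac{k}{2}<|C_p\cap S|\leq k$). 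Now I would pick a maximal subcollection $\mathcal{P}\subseteq\{C_p : p\in S\}$ of pairwise interior-disjoint homothets (greedily: sort by size, repeatedly take the smallest homothet disjoint from those already chosen). The key point is to show $|\mathcal{P}|=\Omega_d(|S|/k)$, which, combined with the fact that each member of $\mathcal{P}$ is a $k^+/S$-homothet, yields $g(C,k,S)\geq|\mathcal{P}|=\Omega_d(|S|/k)$.

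The heart of the argument is a charging/covering step. Let $\mathcal{P}=\{C_{p_1},\dots,C_{p_m}\}$. By maximality, every $C_p$ ($p\in S$) intersects some $C_{p_j}\in\mathcal{P}$; because we processed homothets in increasing order of size, we may take $C_{p_j}$ to be no larger than $C_p$, i.e. $C_p$ is a homothet of $C$ meeting $C_{p_j}$ with homothety coefficient at least that of $C_{p_j}$. After rescaling by the coefficient of $C_{p_j}$, we are exactly in the situation governed by Lemma~\ref{teo:hit}: the center $p$ lies in a translated-and-scaled copy of the set $P_{C}$ of size at most $c_1=c_1(d)$ associated to $C_{p_j}$ (or, more directly, each $C_{p_j}$ can meet only boundedly many ``larger'' homothets of a controlled shape, so only $O_d(1)$ of the points $p$ can have $C_p$ attached to a given $C_{p_j}$ --- this is precisely the $\tau$-satellite style estimate alluded to in Section~\ref{sec:overview5}). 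Concretely: I would argue that for each $j$, the number of $p\in S$ whose $C_p$ meets $C_{p_j}$ and is not smaller than $C_{p_j}$ is at most some $c_3=c_3(d)$. Summing over $j$: $|S|\leq\sum_{j=1}^m |\{p : C_p \text{ charged to } C_{p_j}\}|\cdot 1$, but each such $p$ contributes at most... wait, better: each $p\in S$ is one of at most $k$ points of $S\cap C_p$, and $C_p$ is charged to some $C_{p_j}$; hence $|S|\leq\sum_{j=1}^m (\text{number of homothets } C_p \text{ charged to } C_{p_j})\cdot\max_p|C_p\cap S|\leq m\cdot c_3\cdot k$, giving $m\geq\frac{|S|}{c_3 k}$, as desired. (The slight care needed is that distinct points $p$ may have the same homothet $C_p$ only if they coincide, so counting homothets charged to $C_{p_j}$ and counting their at-most-$k$ points each does cover all of $S$.)

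For the main obstacle: establishing the bound $c_3=c_3(d)$ on how many of the $C_p$'s (all meeting a fixed $C_{p_j}$ and all at least as large as it) can be interior-disjoint from one another is the real content --- but in fact I do \emph{not} need them to be interior-disjoint; I need to bound how many distinct $p\in S$ can have $C_p$ meeting $C_{p_j}$ with coefficient $\geq$ that of $C_{p_j}$, \emph{without} any disjointness. That is false in general (many tiny-looking points could cluster). The fix is to be smarter in the greedy selection: rather than charging all $p$ with $C_p\cap C_{p_j}\neq\emptyset$, I should use Lemma~\ref{teo:hit} in the contrapositive direction. The clean route, matching the paper's phrasing about ``collections of homothets which intersect a common homothet,'' is: after fixing $\mathcal{P}$ maximal, note $\bigcup_{p\in S} C_p$ is covered by $\bigcup_j \{$all homothets of $C$ of coefficient $\geq\text{coeff}(C_{p_j})$ meeting $C_{p_j}\}$, and each such family, being ``large homothets through a common body,'' has its centers hit by the $O_d(1)$-point set $P_{C_{p_j}}$ of Lemma~\ref{teo:hit} --- but a point of $P_{C_{p_j}}$ lies in at most ... homothets. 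I expect the actual published proof instead runs the argument the other way: take a \emph{maximal packing} $\mathcal{P}$ among $\{C_p\}$, and for a point $q\in\bigcap$ of a satellite family show the centers of that family lie in a bounded region and the family's members, being pairwise-disjoint and each of size comparable to the smallest, number at most $c_2(d)$ by a volume/angular argument just like Lemma~\ref{teo:neighborhoodcover}. Then $|S|=|\bigcup_p (S\cap C_p)|\leq\sum_{C'\in\mathcal{P}}(\text{number of } C_p\text{'s in } C'\text{'s satellite family})\cdot k\leq c_2(d)\, k\, |\mathcal{P}|$. Reconciling which disjointness/packing is used where, and pinning down the satellite-family cardinality bound via the fatness of $C$ (so the constant is $d$-dependent only), is the step I'd budget the most care for; everything else is a transcription of the covering argument.
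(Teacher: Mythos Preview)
Your overall architecture---greedy selection of the smallest remaining $C_p$, then a ``satellite'' bound on how many other $C_p$'s can touch the selected one---is exactly what the paper does (it phrases the greedy step as an induction on $|S'|$, but the content is the same). Two things need fixing.

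First, a definitional slip: copying the covering proof, you take $C_p$ to be the smallest homothet $\lambda C+p$ covering more than $k/2$ points, and then assert it is a $k^+/S$-homothet. It is not: that construction gives $\tfrac{k}{2}<|C_p\cap S|\le k$, i.e.\ a $k^-/S$-homothet, which is useless for packing. The paper instead lets $C_p$ be the smallest homothet $\lambda C+p$ containing at least $k$ points; the non-$\tfrac{k}{2}/C$-degeneracy then gives $k\le |C_p\cap S|<\tfrac{3k}{2}$.

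Second, the satellite bound you keep circling is not ``at most $c_3(d)$ homothets $C_p$ meet $C_{p_0}$'' (which, as you correctly worry, is false), nor is it a statement about pairwise-disjoint satellites. The correct claim (Claim~\ref{teo:satelliteconfig} in the paper) is that at most $c_3(d)\cdot k$ of the homothets $C_p$ meet $C_{p_0}$, and the proof has two parts you only half-identified. For $p$ far from $p_0$ (outside $r_d d B^d$ after normalizing $C_{p_0}$), the angular argument of Lemma~\ref{teo:neighborhoodcover} shows these $p$'s are covered by $O_d(1)$ of the $C_p$'s, hence number at most $O_d(1)\cdot\tfrac{3k}{2}$. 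For $p$ near $p_0$, cover $r_d d B^d$ by $O_d(1)$ balls of radius $\tfrac12$; each such ball sits inside a translate of $C_{p_0}$, hence (by minimality of $C_{p_0}$) contains fewer than $k$ points of $S'$. Both pieces contribute $O_d(k)$, not $O_d(1)$. Once you have this, the count is simply: remove from $S'$ the at most $c_3 k$ points $p$ with $C_p\cap C_{p_0}\neq\emptyset$ and induct, yielding a packing of size $\lfloor |S|/(c_3 k)\rfloor$. Your attempted inequality $|S|\le m\cdot c_3\cdot k$ is then correct, but with $c_3$ already absorbing the factor that you tried to insert separately via ``$\max_p|C_p\cap S|$''.
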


Again, we assume that $B^d\subseteq C\subseteq dB^d$ and $|S|\geq k$, and we begin by proving the result for point sets.

For each $p\in S$, denote by $C_{p}$ the smallest homothet of the form $\lambda C+p$ which contains at least $k$ points of $S$. All of the $C_p$'s are $k^+/S$-homothets of $C$ and, by the assumption that $S$ is non-$\frac{k}{2}/C$-degenerate, each of them contains less than $\frac{3k}{2}$ elements of $S$. For any subset $S'\subseteq S$, let $C_{S'}=\lbrace C_{p}\text{ }|\text{ }p\in S'\rbrace$. We require the following preliminary result.

\begin{claim}\label{teo:satelliteconfig}
There is a constant $c_3=c_3(d)$ with the following property: If $S'\subset S$ and $p_0\in S'$ is such that $C_{p_0}$ is of minimal size amongst the elements of $C_{S'}$, then $C_{p_0}$ has nonempty intersection with at most $c_3k$ other elements of $C_{S'}$.
\end{claim}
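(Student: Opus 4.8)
The plan is to fix $p_0 \in S'$ with $C_{p_0}$ of minimal size, and to control how many of the homothets $C_p$ (with $p \in S'$) can meet $C_{p_0}$ by a charging argument: each such $C_p$ must contain roughly $k$ points of $S$, these points live near $C_{p_0}$, and no point of $S$ can be charged too many times because the $C_p$'s cannot be too lopsided in size. First I would set up the geometry. Write $\lambda_0 C + p_0 = C_{p_0}$ and recall $B^d \subseteq C \subseteq dB^d$, so $C_{p_0}$ contains a ball of radius $\lambda_0$ and is contained in a ball of radius $d\lambda_0$ centered at $p_0$. If $C_p = \lambda_p C + p$ meets $C_{p_0}$, then by minimality $\lambda_p \geq \lambda_0$. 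The key observation is that $\lambda_p$ also cannot be much \emph{larger} than $\lambda_0$: if $C_p$ meets $C_{p_0}$ and $\lambda_p$ is huge, then $C_p$ contains a translate of $\lambda_0 C$ centered near $p_0$ — more precisely, $C_p$ contains $\lambda_0 C + q$ for some $q$ within distance $O(d\lambda_0)$ of $p_0$ — hence $C_p$ would contain at least $k$ points of $S$ already at a much smaller dilation than $\lambda_p$, contradicting the minimality in the definition of $C_p$. This gives $\lambda_0 \leq \lambda_p \leq c_4(d)\lambda_0$ for all $p \in S'$ with $C_p \cap C_{p_0} \neq \emptyset$.

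Next I would localize everything: since $\lambda_p \leq c_4 \lambda_0$ and $C_p$ meets $C_{p_0}$, the whole homothet $C_p$ is contained in the ball $B(p_0, R)$ with $R = (2c_4 + 1)d\lambda_0$, and in particular $p \in B(p_0, R)$ and every one of the (at least $k$) points of $S$ inside $C_p$ lies in $B(p_0, R)$. Let $N$ be the number of elements of $C_{S'}$ meeting $C_{p_0}$; I want to bound $N$. Each such $C_p$ contains at least $k$ points of $S \cap B(p_0, R)$. If I can show that each point $s \in S \cap B(p_0, R)$ lies in at most $c_5(d)$ of these homothets $C_p$, then double counting the incidences between these $N$ homothets and $S \cap B(p_0,R)$ gives $Nk \leq c_5 |S \cap B(p_0,R)|$. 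Finally I need $|S \cap B(p_0,R)| = O_d(k)$: the ball $B(p_0,R)$ can be covered by $O_d(1)$ translates of $\lambda_0 C$ (a volume/packing argument, since $\lambda_0 B^d \subseteq \lambda_0 C$ and $R/\lambda_0 = O_d(1)$), and by minimality each translate $\lambda_0 C + x$ contains fewer than $k$ points of $S$ when $x = p$ for... hmm, that last point needs care — instead I would cover $B(p_0,R)$ by $O_d(1)$ homothets of $C$ of radius comparable to $\lambda_0$ and note that a homothet of $C$ of coefficient $\lambda_0$ placed anywhere contains at most... this is where I must be slightly careful, so let me instead cover $B(p_0, R)$ by $O_d(1)$ translates of $C_{p_0}$ itself (coefficient exactly $\lambda_0$) and argue each contains $O(k)$ points: any translate $C_{p_0} + v$ is contained in a homothet $C_s + w$ of coefficient $\lambda_0 \le \lambda_s$ for suitable $s$... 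Actually the clean way: every translate of $\lambda_0 C$ is contained in $2\lambda_0 C + x$ for appropriate $x$, and $2\lambda_0 C + p_0 \supseteq C_{p_0}$ but we need the count for $2\lambda_0 C$. Since a homothet of coefficient $\ge \lambda_0$ meeting $C_{p_0}$ contains $< 3k/2$ points (the non-degeneracy bound on $C_p$), and a translate of $2\lambda_0 C$ decomposes into $O_d(1)$ translates of $\lambda_0 C$ each contained in some $C_q$-type homothet meeting $C_{p_0}$... I would make this precise by: $B(p_0,R)$ is covered by $O_d(1)$ balls of radius $\lambda_0$, each such ball is inside a translate $\lambda_0 C + x$, and $\lambda_0 C + x$ shares a point with $C_{p_0}$'s neighborhood so contains at most $3k/2$ points by the degeneracy hypothesis applied to the smallest containing homothet — wrapping up, $|S \cap B(p_0,R)| = O_d(k)$ and hence $N = O_d(k)$.

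The remaining gap is the double-counting multiplicity bound: \emph{each $s \in S$ lies in at most $c_5(d)$ of the homothets $C_p$ with $C_p \cap C_{p_0} \neq \emptyset$}. This is essentially a satellite-configuration estimate. Since all such $C_p$ have coefficient in $[\lambda_0, c_4\lambda_0]$, contain $s$, and contain $p$ with $|p - p_0| \le R$, the centers $p$ of those $C_p$ containing $s$ form a bounded set, and each $C_p$ is a fat body of comparable size containing the common point $s$; by Lemma \ref{teo:neighborhoodcover} (applied with the common intersection point $s$) a sub-collection of size $c_2(d)$ of these homothets already covers all their centers, but I want the reverse — a point covered by many comparably-sized fat homothets all through $s$ forces their centers to be spread out, which a Tammes-type argument (Observation \ref{teo:tammes}) bounds. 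Concretely: if $C_{p}$ and $C_{p'}$ both contain $s$, both have coefficient $\asymp \lambda_0$, and the directions $(p-s)/|p-s|$, $(p'-s)/|p'-s|$ are very close while $|p-s| \asymp |p'-s|$, then one of $C_p, C_{p'}$ contains the other's center (same cone argument as in Lemma \ref{teo:neighborhoodcover}), contradicting minimality in the definition of the smaller one. So the normalized directions from $s$ to the centers are $t(d)$-separated, giving at most $c(d,t) = c_5(d)$ of them. I expect this multiplicity bound, together with correctly nailing down $|S \cap B(p_0,R)| = O_d(k)$, to be the main obstacle; both reduce to the same ``comparably-sized fat homothets through a common point have separated centers'' principle already used in Lemma \ref{teo:neighborhoodcover}, so the proof is morally a repackaging of that lemma plus the degeneracy hypothesis. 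Combining, $N \le c_5 \cdot |S \cap B(p_0,R)| / k \cdot k / k \cdot \text{(const)} = O_d(k)$, i.e. $c_3 = c_3(d)$ as claimed. $\qed$
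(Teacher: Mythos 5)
Your argument has a genuine gap, and it starts at the ``key observation'' that $\lambda_p\leq c_4(d)\lambda_0$ whenever $C_p\cap C_{p_0}\neq\emptyset$. This is false. Take $p\in S'$ very far from $p_0$ whose nearest $k$ points of $S$ are precisely the cluster sitting inside $C_{p_0}$: then $C_p$ must dilate until it reaches that cluster, so $\lambda_p$ is of order $|p-p_0|/\lambda_0$ times $\lambda_0$, i.e.\ unbounded relative to $\lambda_0$, and yet $C_p$ meets $C_{p_0}$. Your attempted contradiction does not go through because a huge $C_p$ that merely touches $C_{p_0}$ need not contain $k$ points of $S$ at any smaller dilation \emph{centered at $p$} --- minimality in the definition of $C_p$ only quantifies over homothets of the form $\lambda C+p$. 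Once the two-sided comparability fails, the localization of all relevant $C_p$ (and of their $k$ points) inside $B(p_0,R)$ fails, and the double-counting scheme collapses. There is also a second, independent problem: the multiplicity bound ``each $s\in S$ lies in at most $c_5(d)$ of the homothets $C_p$'' is false --- in a tight cluster of $2k$ points every one of the $2k$ homothets $C_p$ contains every point of the cluster. The step where you derive a contradiction from $p'\in C_p$ is not a contradiction at all, for the same reason as above. A useful sanity check: if both $|S\cap B(p_0,R)|=O_d(k)$ and the $O_d(1)$ multiplicity held, your inequality $Nk\leq c_5|S\cap B(p_0,R)|$ would give $N=O_d(1)$, which is strictly stronger than the claim and provably wrong (the cluster example gives $N=\Theta(k)$).

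The paper avoids both issues by counting \emph{centers} rather than incidences, splitting them by distance to $p_0$. Centers inside $r_d\, dB^d$ (after normalizing $C_{p_0}$) number at most $c_{d,r_d}k$, because that region is covered by $O_d(1)$ balls of radius $\tfrac12$, each of which contains fewer than $k$ points of $S'$ by the minimality of $C_{p_0}$ --- this is the part of your argument that is essentially salvageable. For the far centers, whose homothets can be arbitrarily large, one runs the greedy cone/Tammes argument of Lemma~\ref{teo:neighborhoodcover} on the set of far centers $p$ with $C_p\cap C_{p_0}\neq\emptyset$ (all these $C_p$ pass near the common point $p_0$): it produces $O_d(1)$ homothets from the family $C_{S'}$ that together cover all far centers, and each of these contains fewer than $\tfrac{3k}{2}$ points of $S$ by the non-$\tfrac{k}{2}/C$-degeneracy hypothesis, so there are $O_d(k)$ far centers. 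The degeneracy hypothesis enters exactly here, to cap the number of centers a single $C_p$ can absorb; your write-up never uses it in a load-bearing way, which is another sign the argument as structured cannot be completed.
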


\begin{proof}
After translating, we may assume that $B^d\subseteq C_{p_0}\subseteq dB^d$. For any $r\in\mathbb{R}$, the number of translates of $\frac{1}{2}B^d$ required to cover $rdB^d$ depends only on $d$ and $r$ and, by the choice of $p_0$, every one of these balls of radius $\frac{1}{2}$ contains less than $k$ points of $S'$. Hence, $|rdB^d\cap S'|\leq c_{d,r}k$.

Assume, w.l.o.g., that $p_0=O$ and let $c(d,t')$ be as in Observation \ref{teo:tammes} for some small $t'=t'(d)$ to be specified later. For each $r$, denote by $S_r'\subset S'$ the set that consists of those points $p\in S'$ such that $p\notin rdB^d$ and $C_p$ intersects $C_{p_0}$. Since $C$ is $1/d$-fat, it is not hard to see that for some large enough $r_d$ (which depends only on $d$) the following holds: if $p_1,p_2\in S_{r_d}'$ are such that $|p_1|\geq|p_2|$ and $\frac{p_1}{|p_1|},\frac{p_2}{|p_2|}$ are at distance less than $t'$, then $p_2\in C_{p_1}$. We can then proceed along the lines of the proof of Lemma \ref{teo:neighborhoodcover} to show that $S_{r_d}'$ can be covered by no more than $c(d,t')$ elements of $C_{S'}$, which yields $|S_{r_d}'|\leq\frac{3}{2}c(d,t')k$. Hence, there are at most $(c_{d,r_d}+\frac{3}{2}c(d,t'))k$ elements of $C_{S'}$ which have nonempty intersection with $C_{p_0}$, and the result follows by setting $c_3=c_{d,r_d}+\frac{3}{2}c(d,t')$.
\end{proof}

We can now prove Theorem \ref{teo:gbound}.

\begin{proof}
Let $S'\subseteq S$. We show by induction on $|S'|$ that there is a packing formed by at least $\lfloor\frac{|S'|}{c_3k}\rfloor$ elements from $C_{S'}$ (if $|S|<k$, set $C_{S'}=\emptyset$); since $C_S$ consists only of $k^+/S$ homothets of $C$, the result will follow immediately . 

Our claim is trivially true if $|S'|<c_3k$. Let $S'\subseteq S$ with $|S'|\geq c_3k$ and assume that the result holds for all subsets with less than $|S'|$ elements. Choose $p_0\in S'$ so that $C_{p_0}$ is of minimal size amongst the elements of $C_{S'}$. Let $S_{p_0}=\{p\in S'\text{ }|\text{ }C_p\cap C_{p_0}\neq\emptyset\}$ and set $S''=S'-S_{p_0}$. Since $|S''|<|S'|$, the inductive hypothesis tells us that it is possible to choose $t\geq\lfloor\frac{|S''|}{c_3k}\rfloor$ points $p_1,p_2,\dots,p_t\in S''$ so that the homothets $C_{p_1},C_{p_2},\dots,C_{p_t}$ are pairwise disjoint. By the definition of $S''$, these homothets do not intersect $C_{s_0}$, this shows that we can choose $t+1$ disjoint homothets from $C_{S'}$. By Claim \ref{teo:satelliteconfig}, $|S''|\geq |S'|-c_3k$ and hence $t\geq\lfloor\frac{|S'|}{c_3k}\rfloor-1$, which yields the result.

Now, suppose that $\mu$ is $C$-nice and $K$ is a ball with $\mu(K)=\mu(\mathbb{R}^d)>k$. For each $p\in K$, define $C_{p}$ as the smallest homothet of the form $\lambda C+p$ which has measure $k$ and, for $K'\subseteq K$, let $C_{K'}\mu=\lbrace C_{p}\text{ }|\text{ }p\in K'\rbrace$. Claim \ref{teo:satelliteconfig} can be easily adapted to work measures, which then allows us to proceed as in the previous paragraph (except that we now induct on $\mu(K')$) to prove the measure theoretic version of Theorem \ref{teo:gbound}.
\end{proof}

Similarly to Theorem \ref{teo:fbound}, the non-$\frac{k}{2}$-degeneracy condition on $S$ can be relaxed to non-$tk$-degeneracy for some fixed $t>0$, and the non-$C$-degeneracy of $\mu$ can be substituted for the weaker requirement that no boundary of an homothet has measure larger than $tk$. Again, the proof extends to suitable weighted points sets. 

In similar fashion to the proof of the Besicovitch covering theorem, it is also possible to derive Theorem \ref{teo:fbound} by adapting the technique above. Indeed, we could have defined $C_{p}$ to be the smallest homothet of the form $\lambda C+p$ that contains at least $\frac{k}{2}$ points of $S$. The proof of \ref{teo:satelliteconfig} would then yield a collection of $c_3$ $k^-/S$-homothets of $C$ that covers the set $S_{p_0}=\{p\in S\text{ }|\text{ }C_p\cap C_{p_0}\neq\emptyset\}$. We add these $O_d(1)$ homothets to the cover and add all the elements of $C_{p_0}\cap S$ to an initially empty set $P$. Now, consider $p_1\in S-S_{p_0}$ such that the size of $C_{p_1}$ is minimal and go through the same steps as before. This process is then repeated as long as $S$ is not yet fully covered. At least $\frac{k}{2}$ new elements are added to $P$ with each iteration, so the number of homothets in the final cover is no more than $\frac{2n}{k}O_d(1)=O_d(\frac{n}{k})$, as desired. The proof presented in Section \ref{chap:cover}, however, will lead to a randomized algorithm for approximating $C$-$k$-COVER in Section \ref{sec:algorithms}.

\subsection{Generalized packing density}\label{sec:packdensity}

\begin{theorem}\label{teo:packdensity}
Let $C\subset\mathbb{R}^d$ be a convex body and $\mu$ a non-$C$-degenerate measure with $\mu(C)>0$ and $\mu(\mathbb{R}^d)>\mu(C)$. Then $\Theta_H(\mu,C)$ is bounded from below by a function of $d$.
\end{theorem}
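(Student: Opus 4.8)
The statement concerns the translational packing density with respect to $\mu$, i.e.\ $\delta_H(\mu,C)$ from Section~\ref{sec:packcover} (the symbol $\Theta_H$ in the statement is a slip for $\delta_H$, exactly as the symbol in Theorem~\ref{teo:coverdensity} is a slip for $\Theta_H$). Write $\kappa:=\mu(C)$. After translating $C$ so that the incentre of $C$ is the origin we may assume $B(O,a)\subseteq C\subseteq B(O,b)$ for some $0<a\le b$ depending on $C$; this changes neither the family of homothets of $C$ nor any measure of a homothet. Since $\mu$ vanishes on the boundary of every homothet it has no atoms, and for every $x$ the map $\lambda\mapsto\mu(\lambda C+x)$ is continuous and nondecreasing from $0$ to $\mu(\mathbb{R}^d)$; the same holds for $t\mapsto\mu(q+t(C'-q))$ whenever $C'$ is a homothet and $q\in\operatorname{int}C'$. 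I will use two consequences freely: (i) \emph{trimming} — any homothet $C'$ with $\mu(C')\ge\kappa$ contains a homothet of $C$ of measure exactly $\kappa$, obtained by shrinking $C'$ homothetically towards an interior point of $C'$, and that point may be taken in any prescribed open set meeting $\operatorname{int}C'$ in positive measure; (ii) disjoint homothets of $C$ of measure $\ge\kappa$ that all lie in a bounded set $U$ number at most $\mu(U)/\kappa$. The plan is to exhibit a single packing $\mathcal{P}$ of $\mathbb{R}^d$ by homothets of $C$ of measure $\kappa$ with $d_{\text{upp}}(\mu,\mathcal{P})\ge c(d):=\tfrac12 c_g(d)$, where $c_g(d)$ is the constant hidden in Theorem~\ref{teo:gbound}; as $\delta_H(\mu,C)$ is the supremum of $d_{\text{upp}}(\mu,\mathcal{P})$ over such packings, this gives the result.

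I would first dispose of the case $\mu(\mathbb{R}^d)<\infty$. Then every packing by homothets of measure $\kappa$ is finite (their disjoint interiors have total measure $\le\mu(\mathbb{R}^d)$, each of measure $\kappa$), and for a finite packing $\mathcal{P}$ one has $d_{\text{upp}}(\mu,\mathcal{P})=|\mathcal{P}|\kappa/\mu(\mathbb{R}^d)$, since eventually every member of $\mathcal{P}$ meets $rB^d$ and $\mu(rB^d)\to\mu(\mathbb{R}^d)$. Choosing a ball $K$ with $\kappa<\mu(K)$, the measure $\mu|_K$ is $C$-nice, so Theorem~\ref{teo:gbound} yields a packing of at least $c_g(d)\mu(K)/\kappa$ homothets of $\mu|_K$-measure $\ge\kappa$, hence of $\mu$-measure $\ge\kappa$; trimming each to measure $\kappa$ preserves the packing and its cardinality, so $\delta_H(\mu,C)\ge c_g(d)\mu(K)/\mu(\mathbb{R}^d)$, and letting $\mu(K)\uparrow\mu(\mathbb{R}^d)$ gives $\delta_H(\mu,C)\ge c_g(d)$.

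Now assume $\mu(\mathbb{R}^d)=\infty$. I would build $\mathcal{P}=\bigcup_{i\ge1}\mathcal{P}_i$, finite packings $\mathcal{P}_i$, and radii $0<r_1<r_2<\cdots\to\infty$ and $\rho_1<\rho_2<\cdots$, maintaining: $\mathcal{P}^{(i)}:=\mathcal{P}_1\cup\cdots\cup\mathcal{P}_i$ is a packing contained in $\rho_iB^d$; every member of $\mathcal{P}_i$ meets $r_iB^d$ and $\rho_i>r_i$; and $\sum_{C'\in\mathcal{P}_i}\mu(C')\ge c(d)\,\mu(r_iB^d)$. Granting this, every member of $\mathcal{P}^{(i)}$ meets $r_iB^d$, so $d_{\text{out}}(\mu,\mathcal{P}|r_iB^d)\ge\sum_{C'\in\mathcal{P}^{(i)}}\mu(C')/\mu(r_iB^d)\ge c(d)$, whence $d_{\text{upp}}(\mu,\mathcal{P})\ge c(d)$. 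For the inductive step (start with $\mathcal{P}^{(0)}=\emptyset$, $\rho_0=1$): given $\mathcal{P}^{(i)}\subseteq\rho_iB^d$, choose $r_{i+1}>3\rho_i$ so large that $\mu(r_{i+1}B^d)>\kappa$ and $c_g(d)\mu(r_{i+1}B^d)/\kappa\ge 2N_i$, with $N_i$ the quantity of the next paragraph. Apply the \emph{proof} of Theorem~\ref{teo:gbound} to the $C$-nice measure $\mu|_{r_{i+1}B^d}$: it produces a packing $\mathcal{Q}$ of at least $c_g(d)\mu(r_{i+1}B^d)/\kappa$ homothets $C_p=\lambda_pC+p$ with $p\in\overline{r_{i+1}B^d}$ and $\mu(C_p\cap r_{i+1}B^d)=\kappa$; note $\lambda_p\le 2r_{i+1}/a$, since a homothet $\lambda C+p$ with $\lambda a\ge|p|+r_{i+1}$ contains $r_{i+1}B^d$ and so has $\mu|_{r_{i+1}B^d}$-measure $\mu(r_{i+1}B^d)\ge\kappa$. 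Discard every $C_p\in\mathcal{Q}$ meeting $\rho_iB^d$, trim each survivor to measure $\kappa$ towards an interior point of $C_p$ lying in $r_{i+1}B^d$ (possible since $\mu(C_p\cap r_{i+1}B^d)=\kappa>0$), call the result $\mathcal{P}_{i+1}$, and set $\rho_{i+1}:=3r_{i+1}b/a$, which contains $\mathcal{P}_{i+1}$ because $C_p\subseteq B(p,\lambda_pb)\subseteq B(O,r_{i+1}+2r_{i+1}b/a)$. Each trimmed survivor lies in its $C_p$, which is disjoint from $\rho_iB^d\supseteq\mathcal{P}^{(i)}$, so $\mathcal{P}^{(i+1)}$ is a packing; every member of $\mathcal{P}_{i+1}$ meets $r_{i+1}B^d$; and $\rho_{i+1}>r_{i+1}>\rho_i$.

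The crux — and the one step I expect to need real work — is the bound: the number of $C_p\in\mathcal{Q}$ meeting $\rho_iB^d$ is at most a quantity $N_i$ depending on $C,\mu,\rho_i$ but \emph{not} on $r_{i+1}$ (so that fixing $r_{i+1}$ afterwards is harmless). I would split these $C_p$ into \emph{small} ones, with $\lambda_p\le\rho_i/b$, and \emph{large} ones, with $\lambda_p>\rho_i/b$. A small $C_p$ has circumradius $\lambda_pb\le\rho_i$, so meeting $\rho_iB^d$ forces $C_p\subseteq B(O,3\rho_i)$; these are disjoint and of measure $\ge\kappa$, so by (ii) there are at most $\mu(B(O,3\rho_i))/\kappa$ of them. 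A large $C_p$ contains the ball $B(p,\lambda_pa)$ of radius $>\rho_ia/b$ and also a point $z$ with $|z|<\rho_i$, while $\operatorname{diam}C_p\le 2\lambda_pb$; hence the cone $\operatorname{conv}(\{z\}\cup B(p,\lambda_pa))\subseteq C_p$ has half-angle bounded below by a positive number depending only on $a/b$, and an elementary computation then places inside $C_p$ a ball of radius $\ge c'(C)\rho_i$ contained in $B(O,4\rho_i)$. These balls lie in the pairwise disjoint interiors of the $C_p$, so a volume count bounds the number of large discarded homothets by $(4/c'(C))^d$. Thus at most $N_i:=\mu(B(O,3\rho_i))/\kappa+(4/c'(C))^d$ homothets are discarded, $\mathcal{P}_{i+1}$ retains at least $c_g(d)\mu(r_{i+1}B^d)/\kappa-N_i\ge\tfrac12 c_g(d)\mu(r_{i+1}B^d)/\kappa$ homothets of measure $\kappa$, i.e.\ $\sum_{C'\in\mathcal{P}_{i+1}}\mu(C')\ge c(d)\mu(r_{i+1}B^d)$, and the induction closes. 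Hence $\delta_H(\mu,C)\ge c(d)=\tfrac12 c_g(d)$, a function of $d$ alone. The same remark as after Theorems~\ref{teo:fbound} and \ref{teo:gbound} applies: only ``no boundary of a homothet has measure larger than $\kappa$'' is really used, via Theorem~\ref{teo:gbound}.
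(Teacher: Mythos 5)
Your proof is correct, and it follows the same overall architecture as the paper's: handle the finite-measure case by applying Theorem~\ref{teo:gbound} to $\mu|_K$ for large balls $K$, and in the infinite case build the packing in stages over radii tending to infinity, each stage contributing, among homothets meeting $r_iB^d$, total measure at least a constant fraction of $\mu(r_iB^d)$. The two arguments diverge exactly at the delicate point you identify as the crux: keeping the stage-$(i+1)$ homothets off the region already occupied. The paper does this by modifying the measure rather than counting discards geometrically: it surrounds the previous packing with a thin spherical shell $R$ carrying an artificial density $l\cdot\mathrm{vol}$, so that (for $l$ large, by a compactness claim) every homothet crossing the shell acquires $\mu_l$-measure above $\tfrac32\mu(C)$; it then applies Theorem~\ref{teo:gbound} with threshold $\tfrac32\mu(C)$ to $\mu_l$ and removes the at most $2\,\mathrm{vol}(R)/(l\,\mu(C))$ homothets with significant volume in $R$, after which the survivors avoid the old ball and still have true $\mu$-measure at least $\mu(C)$. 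You instead work with the unmodified restriction, discard outright every homothet meeting $\rho_iB^d$, and bound the number of discards by an $N_i$ independent of $r_{i+1}$ via the small/large dichotomy (a measure count inside $B(O,3\rho_i)$ for the small ones, a cone-plus-volume count for the large ones), choosing $r_{i+1}$ only afterwards so the discards are a minority. Your route is more elementary and self-contained --- it avoids the auxiliary measure $\mu_l$, the compactness argument behind Claim~\ref{teo:barrier}, and the inflated threshold --- at the price of the short geometric computation you flag, which does go through: whether $|p-z|\ge 2\rho_i$ or not, one places a ball of radius $\rho_i a/b$ inside $C_p\cap B(O,4\rho_i)$, so the volume count closes. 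Both arguments produce the same constant $\tfrac12 c_{g,d}$, and your reading of $\Theta_H$ in the statement as a slip for $\delta_H$ is the intended one.
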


\begin{proof}
If $\mu(\mathbb{R}^d)<\infty$, the result follows readily by applying Theorem \ref{teo:gbound} to the restriction of $\mu$ to sufficiently large balls and then shrinking some homothets if necessary, so we assume that $\mu(\mathbb{R}^d)=\infty$. The strategy that we follow is similar  to the one used for Theorem \ref{teo:coverdensity}.

Choose $\lambda_0>0$ so that $\mu(\lambda_0B^d)\geq\mu(C)$. By Theorem \ref{teo:gbound}, $g(C,\mu(C),\mu|_{\lambda_0B^d})\geq c_{g,d}\frac{\mu(\lambda_0B^d)}{\mu(C)}$, so there is a collection of at least $c_{g,d}\frac{\mu(\lambda_0B^d)}{\mu(C)}$ interior disjoint $\mu(C)^+/\mu|_{\lambda_0B^d}$-homothets of $C$. Each homothet in this collection contains another homothet that has nonempty intersection with $\lambda_0B^d$ and whose measure with respect to $\mu$ is exactly $\mu(C)$. These smaller homothets form a finite packing, which we denote by $\mathcal{F}_0$. 

Assume that we have already chosen $\lambda_0<\lambda_1<\dots<\lambda_t$ so that there is a finite packing $\mathcal{F}_t$ composed by homothets of measure $\mu(C)$ and at least $c_{g,d}\frac{\mu(\lambda_iB^d)}{2\mu(C)}$ of them have nonempty intersection with $\lambda_iB^d$ for every $i\in\{0,1,\dots,t\}$.

Let $\lambda_{\mathcal{F}_t}>\lambda_t$ be such that all homothets of $\mathcal{F}_t$ are fully contained in $\lambda_{\mathcal{F}_t}B^d$. Denote the region $(\lambda_{\mathcal{F}_t}+1)B^d\backslash\lambda_{\mathcal{F}_t}B^d$ by $R$ and, for each $l>0$, let $\mu_l$ be the measure defined by \[\mu_l(X)=\mu(X\backslash(\lambda_{\mathcal{F}_t}+1)B^d)+l\ \text{vol}(X\cap R).\]

\begin{claim}\label{teo:barrier}
If $l$ is large enough, then any homothet that intersects both $\lambda_{\mathcal{F}_t}\mathbb{S}^{d-1}$ and $(\lambda_{\mathcal{F}_t}+1)\mathbb{S}^{d-1}$ has measure larger than $\frac{3}{2}\mu(C)$ with respect to $\mu_l$.
\end{claim}

\begin{proof}
The claim follows from the fact that the volume of any homothet as in the statement is bounded away from $0$. This last observation can be proven by a simple compactness argument.
\end{proof}

Let $l$ be such that the property in Claim \ref{teo:barrier} holds and choose $\lambda_{t+1}$ so that $2\lambda_t<\lambda_{t+1}$, $\lambda_{\mathcal{F}_t}<\lambda_{t+1}$ and \[\mu_l(\lambda_{t+1}B^d)\geq\frac{3\mu(\lambda_{t+1}B^d)}{4c_{g,d}} +\frac{3\text{vol}(R)}{c_{g,d}l}\] (this is possible, since we assumed that $\mu(\mathbb{R}^d)=\infty$). Theorem \ref{teo:gbound} tells us that $g(C,\frac{3}{2}\mu(C),\mu_l|_{\lambda_{t+1}B^d})\geq c_{g,d}\frac{2\mu_l(\lambda_{t+1}B^d)}{3\mu(C)}$; consider a packing by $\frac{3}{2}\mu(C)^+/\mu_l$-homothets which has at least this many elements. This packing contains at most $\frac{2\text{vol}(R)}{l\ \mu(C)}$ homothets $C'$ with $\text{vol}(C'\cap R)\ l\geq\frac{1}{2}\mu(C)$, which we remove from the collection. By the choice of $l$, none of the remaining homothets intersects $\lambda_tB^d$ and each of them has measure at least $\mu(C)$ with respect to $\mu$. Shrinking each homothet we obtain a packing $\mathcal{F}_{t+1,\text{outer}}$ formed by homothets of measure $\mu(C)$ with respect to $\mu$, and it has at least \[\frac{2c_{g,d}}{3\mu(C)}\left(\frac{3\mu(\lambda_{t+1}B^d)}{4c_{g,d}} +\frac{3\text{vol}(R)}{c_{g,d}l}\right)-\frac{2\text{vol}(R)}{l\ \mu(C)}=\frac{c_{g,d}}{2}\frac{\mu(\lambda_{t+1}B^d)}{\mu(C)}\] elements. Let $\mathcal{F}_{t+1}=\mathcal{F}_t\cup\mathcal{F}_{t+1\text{outer}}$, this is a packing with homothets of measure $\mu(C)$ with respect to $\mu$, and it contains at least $\frac{c_{g,d}}{2}\frac{\mu(\lambda_{i}B^d)}{\mu(C)}$ elements which have nonempty intersection with $\lambda_iB^d$ for each $i\in\{0,1,\dots,{t+1}\}$.

Repeating this process, we obtain a sequence $\lambda_0<\lambda_1<\dots$ that goes to infinity and a sequence $\mathcal{F}_0\subset\mathcal{F}_1\subset\dots$ of packings with homothets of measure $\mu(C)$ with respect to $\mu$. Set $\mathcal{F}=\cup_{i=0}^\infty\mathcal{F}_i$, then $\mathcal{F}$ is a packing with homothets of measure $\mu(C)$ and, for  $i=0,1,\dots$, we have that \[d_{out}(\mu,\mathcal{F}|\lambda_iB^d)=\frac{1}{\mu(\lambda_iB^d)}\sum_{C'\in\mathcal{F},C'\cap \lambda_iB^d\neq\emptyset}\mu(C')\geq\frac{1}{\mu(\lambda_iB^d)}\frac{c_{g,d}\mu(\lambda_{i}B^d)}{2\mu(C)}\mu(C)=\frac{c_{g,d}}{2},\] thus
\[d_\text{upp}(\mu,\mathcal{F})=\limsup_{r\rightarrow\infty}d_{\text{out}}(\mu,\mathcal{F}|rB^d)\geq\frac{c_{g,d}}{2},\] as desired. 
\end{proof}

Again, the result holds as long as no boundary of an homothet has measure larger than $tk$ for some fixed $t\in(0,1)$. As in the proof of Theorem \ref{teo:coverdensity}, our argument can be slightly modified to show that $\delta_H(\mu,C)\geq c_{g,d}$ (where $c_{g,d}$ is the hidden constant in Theorem \ref{teo:gbound}). 

\section{Algorithms and complexity}\label{chap:computational}

\subsection{Algorithms}\label{sec:algorithms}

In this section we describe algorithms for approximating $B^d$-$k$-COVER and $B^d$-$k$-PACK (defined in Section \ref{sec:overview6}) up to a multiplicative constant that depends on $d$. The algorithms also provide either a cover with $k^-/S$ balls or a packing  with $k^+/S$ balls with that number of elements. The algorithms essentially recreate the constructive proofs of theorems \ref{teo:gbound} and \ref{teo:fbound}.

We first present a randomized algorithm for approximating $B^d$-$k$-COVER. Given a finite point set $P\subset\mathbb{R}^d$, denote by $r_\text{opt}(P,k)$ the radius of the smallest ball that contains at least $k$ points of $P$. The following result of Har-Peled and Mazumdar \cite{k-enclosing} (see also Section 1 in \cite{geometricapproximation}) will be key.

\begin{theorem}\label{teo:approx-k-ball}
Given a set $P\subset\mathbb{R}^d$ of $n$ points and an integer parameter $k$, we can find, in expected $O_d(n)$ time, a ($d$-dimensional) ball of radius at most $2r_\text{opt}(P,k)$ which contains at least $k$ points of $P$.
\end{theorem}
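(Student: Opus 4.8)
This is a theorem of Har-Peled and Mazumdar~\cite{k-enclosing}, and I outline the strategy one would follow to prove it. The first move is a reduction. Let $B^\ast=B(c^\ast,r^\ast)$ be an optimal $k$-enclosing ball, so $r^\ast=r_\text{opt}(P,k)$, and observe that for any point $q$ with $|q-c^\ast|\le r^\ast$ — in particular, for any $q\in P\cap B^\ast$ — one has $B(q,2r^\ast)\supseteq B^\ast$, hence the $k$-th smallest distance $\rho$ from $q$ to the points of $P$ satisfies $\rho\le 2r^\ast$, and $B(q,\rho)$ contains at least $k$ points. A $k$-th order statistic among $n$ reals is found in $O(n)$ time by linear-time selection, so once $q$ is in hand the output ball is produced in $O(n)$ time. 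Thus it suffices to exhibit, in expected $O_d(n)$ time, a single point of $P$ lying in the closed optimal ball $B^\ast$ (equivalently, within distance $r^\ast$ of $c^\ast$).

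The second step is a grid localization which already yields a constant-factor approximation. For $\ell>0$ let $G_\ell$ be the axis-parallel grid with cells of side $\ell$; hashing the cell indices produces the list of nonempty cells with their point counts in expected $O_d(n)$ time. Two monotone facts tie $\ell$ to $r^\ast$: if the $3^d$-block of cells centred at some cell contains at least $k$ points, then those points lie in a cube of side $3\ell$ and therefore $r^\ast\le\tfrac{3\sqrt d}{2}\ell$; conversely, if $\ell\ge 2r^\ast$ then $B^\ast$, having diameter at most $\ell$, is covered by the $2^d$-block of cells around the cell containing $c^\ast$, so some $3^d$-block is full. Running a geometric search over scales and taking (essentially) the smallest $\ell_0$ at which some $3^d$-block becomes full, these two facts pin down $\ell_0=\Theta_d(r^\ast)$; moreover, for any point $q$ of that full block the at least $k$ points it contains all lie within distance $3\sqrt d\,\ell_0=O_d(r^\ast)$ of $q$, so feeding this $q$ into the reduction of the first paragraph already outputs a ball of radius $O_d(r^\ast)$.

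The third step is sharpening the factor from $O_d(1)$ to $2$ and, above all, bringing the running time down to $O_d(n)$. Having a full block $\mathcal B$ with $\ell_0=\Theta_d(r^\ast)$ we know $c^\ast$ lies in a region of diameter $O_d(r^\ast)$; one then iterates the localization at geometrically finer scales restricted to a neighbourhood of $\mathcal B$, each pass shrinking the uncertainty in the location of $c^\ast$ by a constant factor, until a nonempty cell is forced to meet $B^\ast$ and so supplies a point of $P\cap B^\ast$ — since the initial distortion $\beta(d)$ is a constant, only $O_d(1)$ refinements are needed. I expect the genuinely delicate point to be the running time rather than any geometric fact: a naive geometric search over all relevant scales incurs an extra logarithmic factor, and Har-Peled and Mazumdar remove it by a recursive scheme in which each recursive call is charged against a subset of the points whose total size decays geometrically, the recursion target being selected from a random sample so that these charged subsets are disjoint in expectation (this is also the source of the ``expected'' in the statement, beyond the hashing). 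I would reproduce their analysis for this bookkeeping, citing~\cite{k-enclosing,geometricapproximation}.
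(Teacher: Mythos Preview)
The paper does not prove this theorem at all: it is stated as a result of Har-Peled and Mazumdar~\cite{k-enclosing} (with a pointer to~\cite{geometricapproximation}) and used as a black box in the design of the covering algorithm. Your proposal therefore goes well beyond what the paper does, by sketching the actual argument from the original source. As a summary of the Har-Peled--Mazumdar approach your outline is accurate in spirit---the reduction to locating a point of $P\cap B^\ast$ via the $k$-th nearest neighbour, the grid/hashing localization at geometrically varying scales, and the random-sampling recursion that removes the logarithmic overhead---but for the purposes of matching the paper, a one-line citation would have sufficed.
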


\begin{theorem}\label{teo:coveralg}
Let $S\subset\mathbb{R}^d$ be a set of $n$ points. There is an algorithm that finds a covering of $S$ formed by $O_d(\frac{n}{k})$ $k^-/S$-homothets of $B^d$ in expected $O_d(\frac{n^2}{k})$ time.
\end{theorem}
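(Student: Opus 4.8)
The plan is to turn the constructive proof of Theorem~\ref{teo:fbound} into an algorithm, with the smallest-enclosing-ball oracle of Theorem~\ref{teo:approx-k-ball} standing in for the exact computation of the homothets $C_p$. Recall that in the proof of Theorem~\ref{teo:fbound} we (i) built a weak $\tfrac{k}{2|S|}$-net $W$ for $(S,\mathcal H_C|_S)$, using the greedy procedure of Section~\ref{sec:weaknets}; (ii) for every $p\in S$ defined $C_p$ to be the smallest ball centered at $p$ covering more than $k/2$ points, which is automatically a $k^-/S$-ball; and (iii) for each $w\in W$ pulled out a bounded subfamily of $\{C_p : w\in C_p\}$ covering $S_w=\{s : w\in C_s\}$ via Lemma~\ref{teo:neighborhoodcover}. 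I would follow exactly these three stages, replacing "smallest ball" by "ball of radius at most twice the smallest" at each point where an enclosing ball is computed; by the remark following the proof of Theorem~\ref{teo:nets} (taking the constant $c=2$ there and hitting every homothet of coefficient $\ge 1/2$ in Lemma~\ref{teo:hit}) and the remark that the hidden constants in Lemmas~\ref{teo:hit} and~\ref{teo:neighborhoodcover} are all independent of $C=B^d$, a factor-$2$ blow-up in every radius costs only a factor $O_d(1)$ in the number of balls produced, so the output is still $O_d(n/k)$ balls.

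Concretely, first I would build $W$: repeatedly invoke Theorem~\ref{teo:approx-k-ball} with parameter $\lceil k/2\rceil$ on the set of not-yet-covered points to get a ball of radius at most $2r_{\mathrm{opt}}$ containing at least $k/2$ of them, add to $W$ the $O_d(1)$ grid points $P_{B'}$ hitting every ball of coefficient $\ge 1/2$ meeting $B'$ (the modified Lemma~\ref{teo:hit}), delete the points inside $B'$, and iterate. Each iteration removes at least $k/2$ points, so there are $O(n/k)$ iterations, each running in expected $O_d(n)$ time, hence expected $O_d(n^2/k)$ time for this stage, and $|W|=O_d(n/k)$. The same analysis as in Theorem~\ref{teo:nets} — an approximate smallest ball containing $\ge k/2$ points is still not more than twice as large as the genuinely smallest ball centered at any $p$ containing $>k/2$ points, once the constant in Lemma~\ref{teo:hit} is chosen for coefficient threshold $1/2$ — shows $W$ hits every ball containing $\ge k/2$ points of $S$. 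Second, for each $p\in S$ I would compute, again via Theorem~\ref{teo:approx-k-ball} (now on all of $S$, with parameter $\lceil k/2\rceil+1$), a ball $C_p$ centered essentially at $p$ of radius at most $2r_{\mathrm{opt}}$ covering $>k/2$ points; as in the original proof, shrinking shows $|C_p\cap S|\le k$ as long as $S$ is non-$\tfrac{k}{2}/B^d$-degenerate (the theorem's hypothesis), so each $C_p$ is a $k^-/S$-ball. This is $n$ oracle calls, expected $O_d(n^2)$ time — but note $n^2 = (n^2/k)\cdot k$, so to stay within the claimed $O_d(n^2/k)$ bound one must instead compute $C_p$ only for the $O_d(n/k)$ points $p$ that will actually be used, which leads to the next stage.

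Third, for each $w\in W$ form $S_w=\{s\in S : w\in C_s\}$. Here is the subtlety: we would like to avoid computing $C_s$ for all $s$. One clean way is to reorganize: for $w\in W$, run Theorem~\ref{teo:approx-k-ball}-type growth from $w$ outward — the set $S_w$ is, up to the factor-$2$ slack, the set of points $s$ whose approximate smallest enclosing ball reaches $w$; since $\bigcap_{s\in S_w}C_s\ni w$, Lemma~\ref{teo:neighborhoodcover} applies and yields $S_w'\subseteq S_w$ with $|S_w'|\le c_2(d)$ such that $\{C_p\}_{p\in S_w'}$ covers $S_w$. The union $\bigcup_{w\in W}\{C_p : p\in S_w'\}$ has at most $c_2(d)|W|=O_d(n/k)$ balls, and it covers $S$ because every $p\in S$ has $C_p$ hit by some $w\in W$, whence $p\in S_w$ and $p$ is covered by one of the $\le c_2$ chosen balls for that $w$. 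The construction in Lemma~\ref{teo:neighborhoodcover} is itself a greedy loop of $O_d(1)$ steps, each a linear scan, so it costs $O_d(|S_w|)=O_d(n)$ per $w$, hence $O_d(n\cdot|W|)=O_d(n^2/k)$ in total. I expect the main obstacle to be exactly this bookkeeping: ensuring that one never spends more than $O_d(n^2/k)$ total time, which forces us to compute $C_p$ only for the $O_d(n/k)$ points that survive into some $S_w'$, and to compute the sets $S_w$ without a naive $O(n|W|)$-with-a-large-constant blowup — and, more delicately, to verify that the factor-$2$ approximation in every radius is harmless simultaneously in the net-hitting guarantee (Lemma~\ref{teo:hit} with threshold $1/2$), in the $k^-/S$ property of the $C_p$'s, and in the cone argument of Lemma~\ref{teo:neighborhoodcover} (where the fatness constant $1/d$ for $B^d$ is what makes the angular width, and hence $c_2$, independent of everything but $d$). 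Once those slack factors are absorbed, the running-time and correctness claims both follow.
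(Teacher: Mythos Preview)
Your overall plan is the paper's: build $W$ greedily with the Har-Peled--Mazumdar oracle, then for each $w\in W$ extract $O_d(1)$ balls covering $S_w$ via Lemma~\ref{teo:neighborhoodcover}. Stage~1 matches the paper exactly.

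The gap is in stages 2--3, precisely where you flag it. You try to use the oracle of Theorem~\ref{teo:approx-k-ball} to produce the balls $C_p$, but this is both unnecessary and wrong on two counts. First, the oracle does not return a ball centred at $p$, so Lemma~\ref{teo:neighborhoodcover} (which requires $C_p=p+\lambda C$) would not apply. Second, a ball of radius $\le 2r_{\mathrm{opt}}(S,k/2)$ containing $\ge k/2$ points can contain arbitrarily many points of $S$, so your ``shrinking shows $|C_p\cap S|\le k$'' fails for the approximate ball. The paper instead uses the \emph{exact} $B_p$, the smallest ball centred at $p$ containing at least $k/2$ points, which is automatically a $k^-/S$-ball by non-degeneracy and is computed in $O_d(n)$ time by scanning the distances from $p$ to all of $S$ and selecting the $\lceil k/2\rceil$-th smallest; no oracle is involved here.

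The second missing idea resolves your ``how to form $S_w$ without computing all $C_s$'' worry in one line. Since $B_p$ is a Euclidean ball \emph{centred at} $p$ and $W$ hits it, the element of $W$ closest to $p$ must lie in $B_p$. Hence set $w_p=\arg\min_{w\in W}|p-w|$, at cost $O_d(|W|)=O_d(n/k)$ per point and $O_d(n^2/k)$ total, and let $S_w=\{p:w_p=w\}$. Now $w\in\bigcap_{p\in S_w}B_p$, so the greedy of Lemma~\ref{teo:neighborhoodcover} applies: pick the point of $S_w$ farthest from $w$, compute its exact $B_p$ in $O_d(n)$ time, delete the points it covers from $S_w$, and repeat $O_d(1)$ times; this is $O_d(n)$ per $w$ and $O_d(n^2/k)$ overall. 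All the factor-$2$ slack issues you list disappear once you stop using the oracle for the $C_p$'s.
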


\begin{proof}
By repeated applications of Theorem \ref{teo:approx-k-ball} we can find, in expected $O(\frac{n}{k}\cdot n)$ time, a sequence $B_1,B_2,\dots,B_t$ of balls and a sequence $S=S_1\supset S_2\supset\dots\supset S_{t+1}=\emptyset$ (with $t\leq\lceil\frac{2n}{k}\rceil$) such that each $B_i$ has radius at most $2r_\text{opt}(S_i,k/2)$, contains at least $k/2$ points of $S_i$ and satisfies $S_i\cap B_i=S_i-S_{i+1}$. 

For each $B_i$, we can construct a set $P_{B_i}$ as in Lemma \ref{teo:hit} in $O_d(1)$ time. The union $W$ of these $t$ sets forms a weak $\epsilon$-net for $(S,\mathcal{H}_B|_S)$ (see Theorem \ref{teo:nets}). As in the proof of Theorem \ref{teo:fbound}, for each $p\in S$ let $B_p$ be the smallest ball of the form $\lambda B^d+p$ which covers at least than $\frac{k}{2}$ points of $S$ (if $S$ is not in $\frac{k}{2}/S$-general position, we might have  to perturb $B_p$ slightly so that it contains no more than $k$ points); we do not compute any of these balls at this point in time. Each $B_p$ contains at least one element of $W$, and we can find one such $w_p\in W$ in $O_d(W)=O_d(\frac{n}{k})$ time by simply choosing from $W$ a point that minimizes the distance to $p$. This is repeated for every $p\in S$. 

For every $w\in W$, let $S_w=\{p\in S\text{ }|\text{ }w_p=w\}$. Select from $S_w$ the point $p$ that is the furthest away from $w$ and compute the ball $B_p$. This can be done in $O_d(n)$ time, even in the case that a small perturbation is required, by looking at the distances from $p$ to each other element of $S$. Add $B_p$ to the final cover, remove the points in $B_p$ from $S_w$, and repeat until $S_w$ is empty. As can be seen from the proof of Lemma \ref{teo:neighborhoodcover}, the process ends after $O_d(1)$ iterations. 

Repeat the scheme above for every $w\in W$ to obtain a cover with the desired properties. This takes $O_d(\frac{n}{k}\cdot n)$ time and, thus, the expected running time of the whole algorithm is precisely $O_d(\frac{n}{k}\cdot n)$. See Section \ref{sec:covering} for some omitted details.
\end{proof}

\begin{theorem}
Let $S\subset\mathbb{R}^d$ be a set of $n$ points. There is an algorithm that computes a packing formed by $O_d(\frac{n}{k})$ $k^+/S$-homothets of $B^d$ in $O_d(n^2)$ time.
\end{theorem}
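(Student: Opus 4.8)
The plan is to mirror the constructive proof of Theorem~\ref{teo:gbound}, replacing the existential steps with algorithmic ones, just as Theorem~\ref{teo:coveralg} did for the covering case. Recall that the packing produced in the proof of Theorem~\ref{teo:gbound} is built greedily: repeatedly take a point $p_0$ whose homothet $C_{p_0}$ (the smallest $\lambda B^d+p_0$ containing at least $k$ points of the surviving set $S'$) is of minimal size among all surviving points, add $C_{p_0}$ to the packing, discard every point $p$ whose homothet $C_p$ meets $C_{p_0}$, and recurse. By Claim~\ref{teo:satelliteconfig} each iteration discards at most $c_3 k$ points, so the packing has $\Omega_d(n/k)$ elements; since the algorithm just follows this recursion verbatim, correctness and the size bound are immediate from Theorem~\ref{teo:gbound}, and only the running time needs care.

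First I would precompute, for every $p\in S$, the radius $r_p$ of the smallest ball $\lambda B^d+p$ containing at least $k$ points of $S$ (with a slight perturbation, if $S$ is not in $\tfrac{k}{2}/S$-general position, to keep it below $\tfrac{3k}{2}$ points). For a fixed $p$ this is done in $O_d(n)$ time: sort the $n-1$ distances from $p$ to the other points (or run a linear-time selection) and read off the $k$-th smallest; doing this for all $p$ costs $O_d(n^2)$, which already dominates the stated bound, so we have room to spare. Each $C_p=r_p B^d+p$ is thus available explicitly. Note that, unlike in the covering algorithm, here we can afford exact smallest enclosing balls rather than the $2$-approximation of Theorem~\ref{teo:approx-k-ball}, because the $\Omega_d(n/k)$ guarantee in Claim~\ref{teo:satelliteconfig} is robust to constant-factor changes in the radius anyway, but exact values make the bookkeeping cleanest.

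Next I would run the greedy loop. Maintain the set $S'$ of surviving points (initially $S$). At each iteration, scan the surviving points to find $p_0$ minimizing $r_{p_0}$ in $O(|S'|)\le O(n)$ time; output $C_{p_0}$; then scan all surviving $p$ and test whether $C_p\cap C_{p_0}\ne\emptyset$, i.e.\ whether $|p-p_0|\le r_p+r_{p_0}$, deleting every such $p$ from $S'$. This test is $O_d(1)$ per point, so an iteration costs $O_d(n)$. By Claim~\ref{teo:satelliteconfig} at least one point (indeed all of $C_{p_0}\cap S'$, but one suffices for the bound) is removed each time, so there are at most $n$ iterations; more to the point, the proof of Theorem~\ref{teo:gbound} shows the loop terminates after $O(n/k)$ iterations while discarding $O_d(k)$ points each, and either way the total is $O_d(n^2)$. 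The homothets output are pairwise disjoint (each new $C_{p_0}$ is disjoint from every previously output homothet, since the corresponding centers were discarded precisely because their homothets met an earlier selected one --- exactly the induction in the proof of Theorem~\ref{teo:gbound}) and each contains at least $k$ points of $S$, so they form a valid packing of $\Omega_d(n/k)$ $k^+/S$-homothets of $B^d$.

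The main obstacle, such as it is, is purely the running-time accounting rather than any new geometric idea: one must be careful that the perturbation step needed when $S$ is not in $\tfrac{k}{2}/S$-general position does not blow up the cost, and that recomputing the minimal-size homothet over the shrinking set $S'$ is charged correctly. Both are handled by the observation that the one-shot $O_d(n^2)$ preprocessing of all the $r_p$'s (which never need updating, since $C_p$ is defined relative to the \emph{fixed} set $S$, not to $S'$) already absorbs everything, leaving the greedy loop to contribute only $O_d(n\cdot \#\text{iterations})=O_d(n^2)$. Hence the algorithm runs in $O_d(n^2)$ time, as claimed. One could plausibly do better with a nearest-neighbor data structure for the disjointness tests, but since the statement only asks for $O_d(n^2)$ I would not pursue that here.
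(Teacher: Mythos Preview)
Your proposal is correct and follows essentially the same approach as the paper: precompute all the radii $r_p$ in $O_d(n^2)$ time, then greedily select the surviving point with minimal $r_p$, add its ball to the packing, and delete every surviving $p$ whose ball meets the selected one, relying on Claim~\ref{teo:satelliteconfig} to bound both the packing size and the number of iterations. The paper's write-up is terser but structurally identical, including the observation that the $O_d(n^2)$ precomputation dominates the cost of the at most $n/k$ linear-time iterations.
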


\begin{proof}
Following the proof of Theorem \ref{teo:gbound}, for each $p\in S$ let $B_{p}$ be the smallest homothet of the form $\lambda B^d+p$ which contains at least $k$ points of $S$ (as in the previous algorithm, we might have to perturb it slightly so that it contains no more than $\frac{3k}{2}$ points) and, for $S'\subseteq S$, set $B_{S'}=\lbrace B_{p}\text{ }|\text{ }p\in S'\rbrace$. Compute all the elements of $B_{S}$ in total $O_d(n^2)$ time and find a point $p_0\in S$ such that $B_{p_0}$ is of minimal radius. Add $B_{p_0}$ to the packing. By Claim \ref{teo:satelliteconfig}, there are at most $c_3k$ points $p\in S$ such that $B_{p}$ intersects $B_{p_0}$ and, given the radius of each $B_{p}$, we can compute in linear time the set $S_{p_0}\subset S$ formed by all of these points. Now, we find a point $p_1\in S-S_{p_0}$ such that $B_{p_1}$ is of minimal radius, add it to the packing, and repeat the process above for as long as possible. At the end, we get a packing composed of $\Omega_d(\frac{n}{k})$ balls which contain at least $k$ points of $S$. Each of the (at most) $\frac{n}{k}$ iterations takes $O_d(n)$ time, so the running time of the algorithm is dominated by the $O_d(n^2)$ time that it takes to compute the elements of $B_{S}$.
\end{proof}

In the same way that the proof of Theorem \ref{teo:gbound} can be adapted to obtain an upper bound for $f$ (see the last paragraph of Section \ref{sec:covering}), we can also modify the algorithm above to get the following result.

\begin{theorem}
Let $S\subset\mathbb{R}^d$ be a set of $n$ points. There is an algorithm that computes, in $O_d(n^2)$ time, a cover of $S$ formed by $O_d(\frac{n}{k})$ $k^-/S$-homothets of $B^d$.
\end{theorem}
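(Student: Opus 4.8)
The plan is to transcribe the $O_d(n^2)$ packing algorithm of the previous theorem, modifying it in exactly the way the last paragraph of Section~\ref{sec:pack} turns the proof of Theorem~\ref{teo:gbound} into a proof of Theorem~\ref{teo:fbound}. In a preprocessing phase I would compute, for every $p\in S$, the smallest ball $B_p$ \emph{centred at} $p$ that contains at least $\lceil k/2\rceil$ points of $S$; since the centre is fixed this is just a selection on the $n$ distances from $p$ to $S$, costing $O_d(n)$, for $O_d(n^2)$ total. Exactly as in the packing algorithm, I would then perturb each $B_p$ slightly (moving its centre by a generic infinitesimal amount when many points of $S$ lie on the bounding sphere) so that it contains between $k/2$ and $k$ points; this makes every $B_p$ a $k^-/S$-homothet of $B^d$, and no perturbation is needed when $S$ is non-$\frac{k}{2}/B^d$-degenerate.

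The main loop maintains an active set $S'\subseteq S$ (initially $S$) and an initially empty family $\mathcal{C}$ of balls. While $S'\neq\emptyset$: pick $p_0\in S'$ minimising $\mathrm{rad}(B_{p_0})$; form the satellite set $S_{p_0}=\{q\in S': B_q\cap B_{p_0}\neq\emptyset\}$, which has size $O_d(k)$ by Claim~\ref{teo:satelliteconfig}; extract from $\{B_q: q\in S'\}$ a subfamily of $O_d(1)$ balls covering $S_{p_0}$; add it to $\mathcal{C}$ and delete $S_{p_0}$ from $S'$. For the extraction I would split $S_{p_0}$ as in the proof of Claim~\ref{teo:satelliteconfig}: the far points $q\in S_{p_0}\setminus r_dB_{p_0}$ are covered by $O_d(1)$ of the balls $B_q$ via the greedy/angular argument behind Lemma~\ref{teo:neighborhoodcover} (repeatedly take an uncovered far point whose distance to $p_0$ is near-maximal), while the near points $q\in S'\cap r_dB_{p_0}$ are covered by first covering the Euclidean ball $r_dB_{p_0}$ by $O_d(1)$ balls of radius $\tfrac12\mathrm{rad}(B_{p_0})$ and, for each of these small balls meeting $S'$, keeping one active point inside it and its ball $B_q$ — which works because each $B_q$ with $q\in S'$ has radius at least $\mathrm{rad}(B_{p_0})$ by the minimality of $p_0$. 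Every sub-step is a linear scan, so one iteration costs $O_d(n)$.

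Correctness is immediate: the sets $S_{p_0}$ deleted along the way cover $S$ (the active set empties, and each point lies in the $S_{p_0}$ of the iteration in which it is deleted), and each $S_{p_0}$ is covered by the $O_d(1)$ balls added at that iteration. For the size bound I would argue as in Theorem~\ref{teo:gbound}: if $p_i$ is the point chosen at iteration $i$, then $p_i$ survived every earlier iteration $j<i$, so $p_i\notin S_{p_j}$, i.e. $B_{p_i}\cap B_{p_j}=\emptyset$; hence $B_{p_0},B_{p_1},\dots$ are pairwise disjoint, the sets $B_{p_i}\cap S$ are pairwise disjoint and each has at least $k/2$ elements, so the loop runs at most $2n/k$ times, and since each iteration adds $O_d(1)$ balls, $|\mathcal{C}|=O_d(n/k)$. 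The running time is $O_d(n^2)$ for preprocessing plus at most $O_d(n/k)$ (a fortiori $n$) iterations of cost $O_d(n)$, hence $O_d(n^2)$ overall.

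I do not expect a serious obstacle: the geometry is already packaged in Lemma~\ref{teo:neighborhoodcover} and Claim~\ref{teo:satelliteconfig}, and the disjointness observation controlling $|\mathcal{C}|$ is the same one used for the packing. The parts requiring the most care are making the satellite-covering sub-step genuinely constructive and constant-time per call — in particular checking that selecting active centres inside a fixed $O_d(1)$-point net of $r_dB_{p_0}$ really covers $S'\cap r_dB_{p_0}$, which is where the fatness of $B^d$ and the minimality of $\mathrm{rad}(B_{p_0})$ are used — and pinning down the perturbation so that each $B_p$ is simultaneously a $k^-/S$-homothet and still centred closely enough to $p$ for the Claim~\ref{teo:satelliteconfig} estimates to apply verbatim.
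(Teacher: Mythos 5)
Your proposal is correct and follows exactly the route the paper intends: it transcribes the $O_d(n^2)$ packing algorithm, replacing each $B_p$ by the smallest ball centred at $p$ containing at least $k/2$ points and covering each satellite set $S_{p_0}$ by $O_d(1)$ balls via the near/far split of Claim~\ref{teo:satelliteconfig} and Lemma~\ref{teo:neighborhoodcover}, with the disjointness of the $B_{p_i}$ giving the $O_d(n/k)$ bound. In fact your write-up supplies more detail than the paper, which only gestures at this adaptation.
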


\subsection{Complexity}\label{sec:complexity}

As mentioned in Section \ref{sec:previouswork}, Bereg et al. \cite{matchingnp} showed if $C$ is a square then deciding whether $g(C,2,S)=\frac{|S|}{2}$ is NP-hard. We prove a similar result for $C$-$k$-COVER.

\begin{theorem}\label{teo:complexity}
Let $C$ be a square and $k$ a positive multiple of $4$. Then $C$-$k$-COVER is NP-hard. In fact, it is NP-hard to determine whether $f(C,k,S)=\frac{|S|}{k}$ or not.
\end{theorem}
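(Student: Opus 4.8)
The plan is to reduce from a known NP-hard problem by building a point set $S$ in which the only way to achieve the optimal value $f(C,k,S) = |S|/k$ forces a binary choice encoding the combinatorial structure of the instance. Since the result is only claimed for squares and does not require $S$ in general position, the natural source is a variant of \emph{planar 3-SAT} or, more directly given the axis-aligned geometry, an NP-hard \emph{exact cover}-type or \emph{planar graph} problem; I would aim for something like PLANAR 3-SAT (or one of its rectilinear embeddable forms) so that variables, clauses, and their incidences can be laid out in the plane without crossings. The overall skeleton: (1) take an instance $\Phi$ of the chosen NP-hard problem; (2) construct in polynomial time a point set $S = S(\Phi)$ together with the (fixed) square $C$; (3) show $\Phi$ is satisfiable (or a yes-instance) if and only if $S$ can be covered by exactly $|S|/k$ $k^-/S$-squares; since any cover needs at least $|S|/k$ squares, a cover of this size must partition $S$ into groups of exactly $k$ points, each enclosable in a translate/homothet of $C$ covering no other point — so the gadgets must be designed so that the \emph{perfect partitions into $k$-blocks realizable by squares} are in bijection with the satisfying assignments.

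The key building blocks are the gadgets. A \textbf{variable gadget} should be a cyclic chain of points arranged so that there are exactly two ways to partition it into $k$-blocks by squares — a "true" partition and a "false" partition — corresponding to the two ways one can tile a cycle of appropriate length by consecutive windows; here the hypothesis that $k$ is a multiple of $4$ is presumably exploited to make the parity and the square-shape constraints line up (e.g., so that each block is a $\sqrt k \times \sqrt k$ sub-grid or a $4$-point corner piece, and the cycle length is a multiple of $k$ admitting the two tilings). A \textbf{clause gadget} should contribute a small cluster of points that can only be completed into full $k$-blocks if at least one incident variable gadget is in the "satisfying" phase, i.e., the variable's chosen partition leaves exactly the right leftover point(s) adjacent to the clause cluster for a square to scoop them up; otherwise the clause cluster is "starved" and no size-$|S|/k$ cover exists. \textbf{Wire/propagation gadgets} (again chains of points) carry the truth value from a variable to each clause it appears in, preserving the two-phase rigidity along the way, and \textbf{crossing-free routing} is handled by appealing to the planarity of $\Phi$; negation is implemented by a half-shift in a wire. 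Throughout, one uses that squares are axis-aligned homothets of a fixed $C$, so a square covering $k$ points essentially must be a minimal enclosing square of those points, which pins down the geometry tightly.

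The two things to verify carefully are: (i) \emph{soundness of the gadgets in isolation} — each variable/wire chain has exactly the two intended $k$-block tilings by squares and no exotic third option, and each clause is satisfiable-in-the-cover iff satisfiable-in-$\Phi$; (ii) \emph{non-interference} — placing all gadgets simultaneously in the plane (with enough empty space between unrelated clusters so that no square can straddle two gadgets and no alternative global partition mixes points across gadgets) does not create new size-$|S|/k$ covers beyond those assembled from per-gadget choices, nor destroy the intended ones. I expect \textbf{(ii), the interference/rigidity analysis, to be the main obstacle}: one must argue that in \emph{any} cover of size exactly $|S|/k$, every square is forced to stay "local" to a single gadget, which typically requires a careful spacing argument plus a counting/exchange argument showing that any square straddling two gadgets would leave some other gadget with a number of uncovered points not divisible by $k$ (or not coverable by a square), contradicting optimality. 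Once locality is established, the biconditional reduces to checking each gadget type, which is the routine part. Finally, the membership question "$f(C,k,S) = |S|/k$?" is exactly the decision version asked for, and since the lower bound $f \ge |S|/k$ is automatic, this is a clean one-shot reduction rather than a gap amplification, giving NP-hardness of $C$-$k$-COVER and of the specific decision problem.
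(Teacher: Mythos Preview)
Your plan follows essentially the same strategy as the paper's proof: a 3-SAT reduction via cyclic variable loops whose two ``phases'' encode truth values, together with clause gadgets that force at least one incident variable into its satisfying phase, the whole thing justified by the observation that a cover of size $|S|/k$ must partition $S$ into exact $k$-blocks.

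Two points where the paper's execution differs from your outline are worth noting. First, the paper reduces from ordinary 3-SAT rather than a planar variant, and handles the resulting crossings between variable loops with an explicit crossing gadget: a local point configuration in which every good square covering points from two loops takes exactly two from each, so the phase of each loop propagates through the crossing independently. Second, the paper's clause mechanism is a parity trick you left unspecified: each clause is realized by a loop whose number of points is \emph{even but not a multiple of $4$}, which forces any good cover to include at least one square straddling the clause loop and an adjacent variable loop --- and the local geometry at the six contact points is arranged so that such a straddling square exists only when that variable's loop is in its satisfying phase. The non-interference issue you flag as the main obstacle is handled in the paper rather lightly (integer coordinates, adequate spacing, loop tops at slightly different heights); the real work lies in the crossing and clause gadgets.
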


\begin{proof}
Suppose that $C$ is a square. We provide a polynomial time reduction from $3$-SAT\footnote{$3$-SAT consists of determining the satisfability of a Boolean formula in conjunctive normal form where each clause has three variables. $3$-SAT is well-known to be NP-complete.} to $C$-$4$-COVER. The construction can easily be adapted to work for any $k$ multiple of $4$.

Suppose we are given an instance of $3$-SAT. To each variable we will assign a collection of points with integer coordinates which form a sort of loop; the number of points in each of these loops will be a multiple of $4$. For each clause, there will be a couple of smaller loops formed too by integer points; the number of points in each of these two loops will be even, but not a multiple of $4$.  The total number of points will thus be a multiple of $4$, say, $4m$. We will call a square \textit{good} if it covers exactly $4$ points. The goal is to construct the loops in such a way that the Boolean formula is satisfiable if and only if the points can be covered by $m$ good squares. Such a collection of squares will be referred to as a \textit{good cover}. Note that in a good cover each point is covered by exactly one square. For an overview of the construction, see figure \ref{fig:3}. 

\begin{figure}[!htbp]
\centering
\includegraphics[scale=0.75]{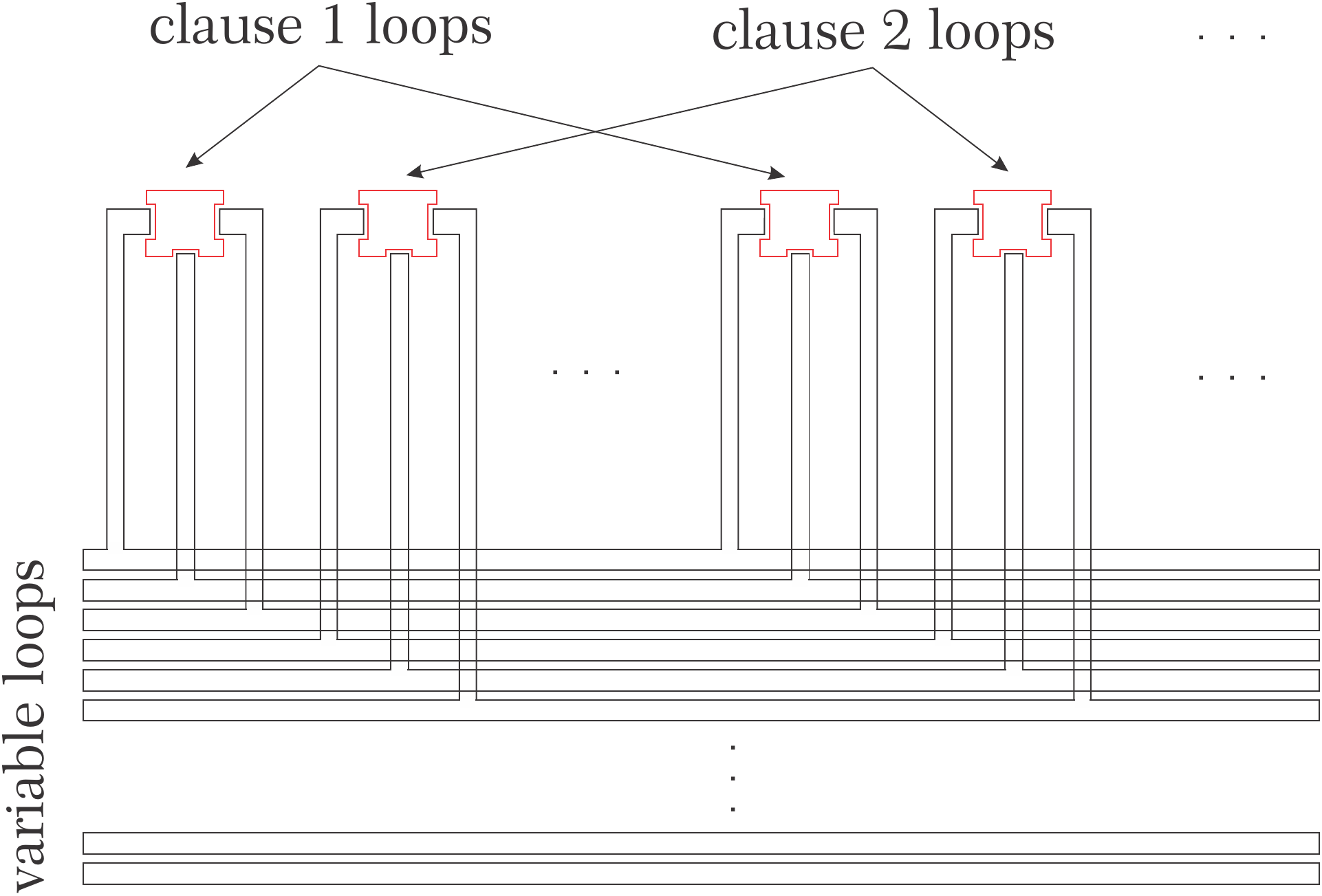}
\caption{Overview the layout of the variable loops (black) and clause loops (red).}
\label{fig:3}
\end{figure}

At each crossing between two variable loops the points are arranged as in figure \ref{fig:4}. By spacing the loops appropriately and constructing their topmost sections at slightly different heights, we ensure that any square covering points from two different variable loops covers either more than $4$ points or covers a crossing between those two loops. The configuration of the points at each crossing makes it so that every good square which contains points from two variable loops covers exactly two points from each of those loops. 

\begin{figure}[!htbp]
\centering
\includegraphics[scale=0.44]{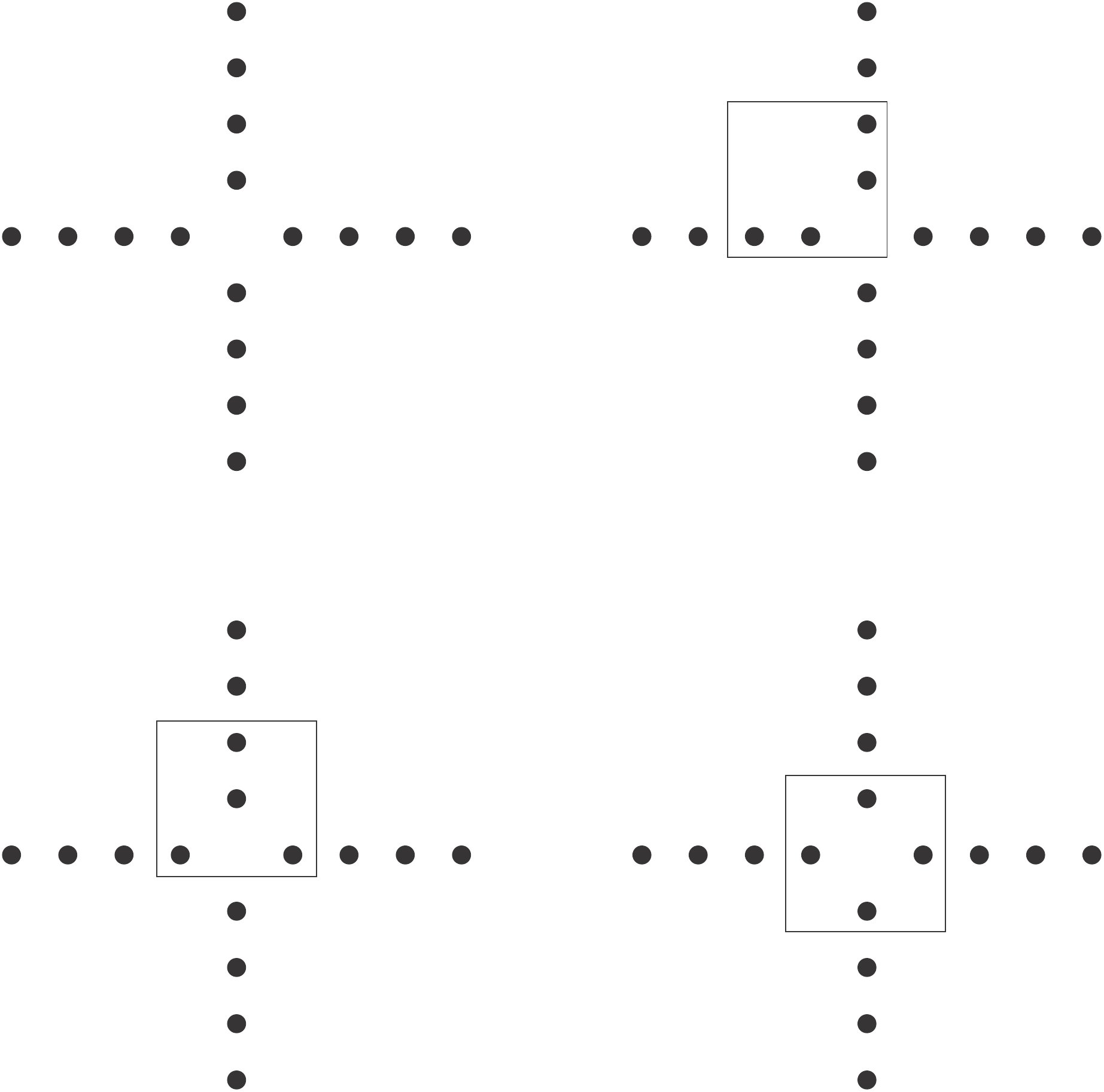}
\caption{Top left: placement of the points around a crossing between two variable loops. The other pictures depict all the essentially different ways in which a good square can cover the crossing.}
\label{fig:4}
\end{figure}

Figure \ref{fig:5} depicts the gadget used to simulate each clause. The configuration inside each of the $6$ red circles is designed so that any good square (inside the circle) which covers points from both the clause loop and the corresponding variable loop covers precisely two points from each. This way, any good square will cover an even number of points from each variable loop. The points of each variable loop are labeled (in order) from $1$ to $4t$ (for some $t$ that depends on the loop). We say that a good cover \textit{assigns} the value \textit{true} (resp. \textit{false}) to a variable if any two points labeled $2s$ and $2s+1$ (resp. $2s+1$ and $2s+2$) in the corresponding loop are contained in the same square, where the indices are taken modulo the total number of points in the loop. Clearly, a good cover assigns exactly one Boolean value to each variable. The points inside $c_{x,1}$ can be arranged so that if a good square that is contained in $c_{x,1}$ covers points from both the clause loop and the variable loop that corresponds to variable $x$, then it contains the points labeled with $4s$ and $4s+1$ if $x$ is not negated in the clause, or it contains the points labeled with $4s+1$ and $4s+2$ if $x$ appears in negated form ($\neg x$). Similarly, the points in $c_{x,2}$ are placed so that a good square which covers points from both the variable and the clause loops covers the points labeled as $4s+2$ and $4s+3$ if $x$ is not negated, or the points $4s+3$ and $4s+4$ if $x$ is negated. The points in $c_{y,1},c_{y,2},c_{z,1}$ and $c_{z,2}$ are arranged analogously. 

\begin{figure}[!htbp]
\centering
\includegraphics[scale=0.7]{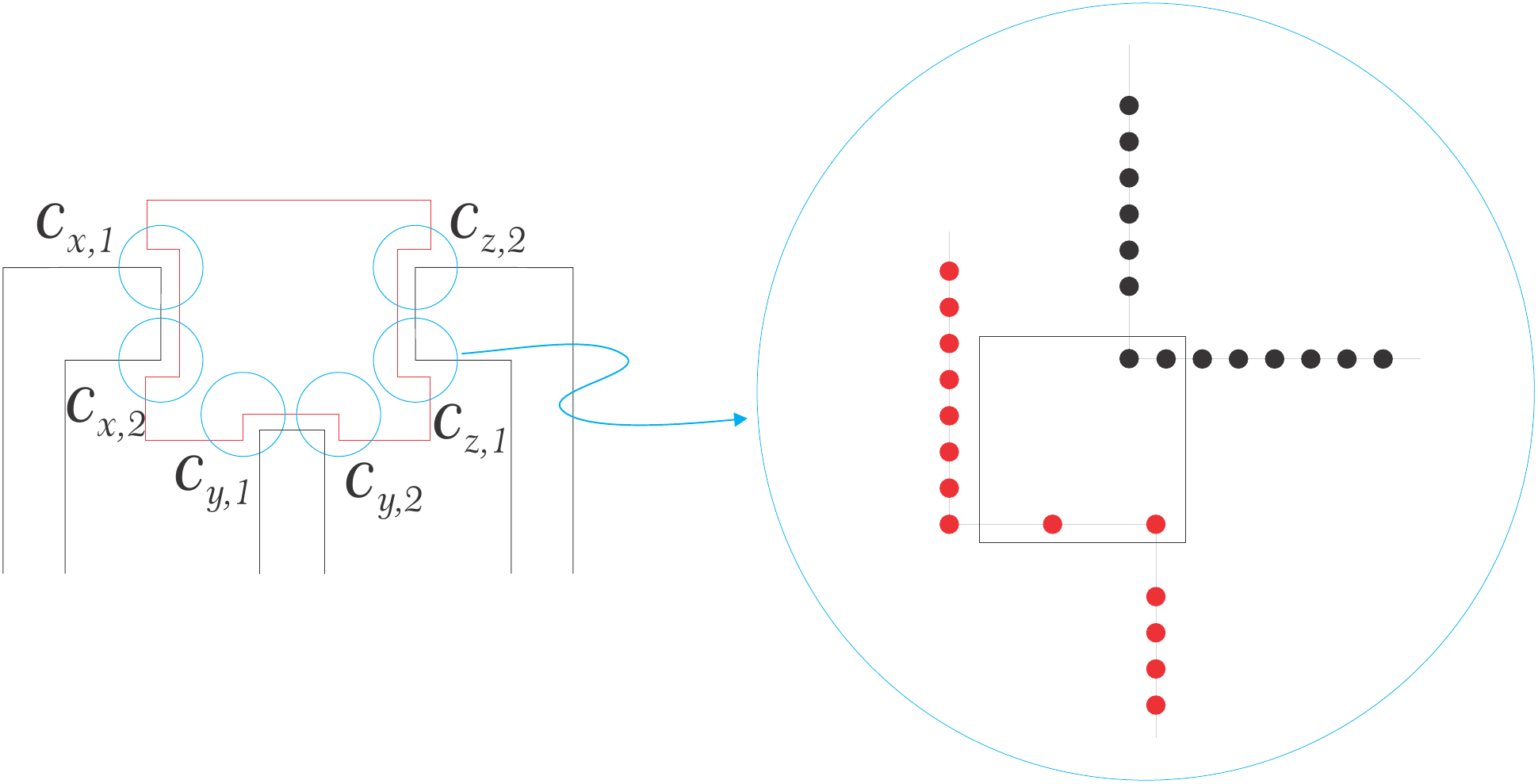}
\caption{Placement of the points around each clause loop. Up to reflection and rotation, the points inside each of the six blue circles are arranged as shown on the right. There is essentially a unique way of placing a good square that covers points from both the clause loop and the corresponding variable loop.}
\label{fig:5}
\end{figure}

Since the number of points of the clause loop is even but not a multiple of $4$, in any good cover there must be a square that contains two points from said loop and two points from one of the three corresponding variable loops. The construction described in the last paragraph makes it so that this is only possible if the cover assigns to one of the three variables the value that makes the clause true. Since this holds for all clause loops simultaneously, this shows that in order for a good cover to exist the formula must be satisfiable. We prove that the converse is true as well. Suppose that the formula is satisfiable and consider an assignment of Boolean values that satisfies it. For every clause choose a variable that has been assigned the correct value (with respect to the clause). Each variable loop can be covered by good squares which assign to it the correct value and such that one of these squares covers two points from each clause loop for which the variable was chosen (again, this is possible by the construction described above). The only thing that could go wrong when covering the variable loops is for the number of squares that cover two points from the variable loop to be odd, but this will not happen, since each variable corresponds to two loops and the number of crossings between any two variable loops is even. Since exactly two points from each variable loop have been covered, the number of points that still need to be covered in each variable loop is a multiple of four, so we can easily extend this collection of good squares to a good cover with ease. We have shown that the initial formula is satisfiable if and only if the point set admits a good cover.

It is not hard to see that the reduction can be carried out in a grid of polynomial dimensions and in polynomial time. This concludes the proof.
\end{proof}

\section{Matching points with homothets}\label{chap:matching}

\subsection{Toughness of {D}elaunay triangulations}\label{sec:tough}

\begin{theorem}\label{teo:tough}
Let $C\subset\mathbb{R}^2$ an $\alpha$-fat strictly convex body with smooth boundary and $S\subset\mathbb{R}^2$ a finite point set in $C$-general position such that no three points of $S$ lie on the same line. If $U\subset S$, then $D_C(S)-U$ has less than  \[\frac{450\degree-4\arcsin{\alpha}}{\arcsin{\alpha}}|U|+\frac{2\arcsin{\alpha}-90\degree}{\arcsin{\alpha}}\] connected components.

Of course, the result holds as long as $C$ can be made $\alpha$-fat by an affine transformation.
\end{theorem}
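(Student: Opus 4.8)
The plan is to follow the strategy of Biniaz's proof of Dillencourt's $1$-toughness theorem for disk Delaunay triangulations, and to see exactly how the angular estimates in that argument degrade when the circumscribing object is an $\alpha$-fat homothet of $C$ instead of a round disk. The first step is to reduce to the case where $C$ is literally $\alpha$-fat. A non-singular affine map $T$ carries homothets of $C$ to homothets of $T(C)$, so it induces a graph isomorphism $D_C(S)\cong D_{T(C)}(T(S))$; moreover strict convexity, smoothness of the boundary, $C$-general position and the ``no three collinear'' condition are all affine-invariant, and the number of components of $D_C(S)-U$ depends only on the isomorphism type of the pair $(D_C(S),U)$. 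Hence we may assume $B(y,\alpha)\subseteq C\subseteq B(y,1)$ for some $y$. We may also assume $|U|\ge 1$ and, since otherwise there is nothing to prove, that $D_C(S)-U$ has at least two components. Under the stated hypotheses $D_C(S)$ is a triangulation of the convex hull of $S$ (the fact recalled in Section~\ref{sec:delaunay}); in particular it is a $2$-connected planar graph each of whose bounded faces is a triangle whose three vertices lie on the boundary of a common homothet of $C$ that contains no other point of $S$.

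With $H_1,\dots,H_c$ the components of $D_C(S)-U$, each $H_i$ is joined in $D_C(S)$ only to vertices of $U$. Two elementary observations organize the count. First, if $a,b\in S\setminus U$ are \emph{consecutive} Delaunay-neighbours of some $u\in U$ in the rotational order about $u$, then $uab$ is a triangular face, so $ab$ is an edge and $a,b$ lie in the same component; thus, going around $u$, the distinct components meeting $u$ are separated by those Delaunay-neighbours of $u$ that lie in $U$, and in particular each ``jump'' from one component to another, as we rotate about $u$, occurs across a $U$-neighbour of $u$. Second, the face angles at $u$ sum to $360\degree$ when $u$ is an interior vertex of $D_C(S)$ and to the interior convex-hull angle at $u$, which is less than $180\degree$, when $u$ lies on the hull of $S$.

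The geometric core of the proof is a quantitative strengthening of the first observation: I would show that whenever $a,b\in S\setminus U$ are Delaunay-neighbours of $u\in U$ that belong (rotationally) to two \emph{different} components about $u$, the total face angle at $u$ between the edges $ua$ and $ub$ is at least $\arcsin\alpha$. The mechanism is that every Delaunay edge $uw$ incident to $u$ carries a homothet $C'$ of $C$ through $u$ and $w$ with no other point of $S$ inside; since $B(z,\alpha\rho)\subseteq C'\subseteq B(z,\rho)$ for suitable $z,\rho$, the chord $uw$ subtends at $u$, inside $C'$, an angle bounded below in terms of $\alpha$ — and if two edges leading to different components were angularly too close, their empty homothets together with the fan of intervening Delaunay triangles would produce an edge between those two components, a contradiction. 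Making this precise — identifying the correct ``smallest'' empty homothet through each relevant pair, handling the boundary-touching and near-degenerate configurations, and checking that $\arcsin\alpha$ is exactly the half-angle cut off by a chord of $B(z,\alpha\rho)$ inside $B(z,\rho)$ — is where essentially all of the difficulty lies. For disks this is Biniaz's key lemma; for a general $\alpha$-fat $C$ one must argue purely from the sandwiching of the empty homothets between concentric balls, with no symmetry of $C$ to lean on, and I expect this to be by far the main obstacle.

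Granting the $\arcsin\alpha$-separation lemma, the proof finishes by double counting. Each component $H_i$ is enclosed by a cyclic sequence of at least three of the ``component jumps'' described above (this is where $2$-connectedness of the triangulation is used), so the number of such jumps, counted with multiplicity over all of $H_1,\dots,H_c$, is at least $3c$ minus a small correction coming from components that touch the convex hull. On the other hand, the lemma assigns to each jump an angular cost of at least $\arcsin\alpha$ at the $U$-vertex across which it occurs, so the jumps hosted by a fixed interior $u\in U$ cost at most $360\degree$ in total and those at a hull vertex at most $180\degree$, with the convex hull of $S$ contributing one further block of at most $180\degree$ of slack; these $180\degree$-terms are what produce the $90\degree$ constants, and $450\degree=360\degree+90\degree$ together with the $-4$ drop out of combining the ``$\ge 3$ jumps per component'' bound with the per-vertex angle budgets. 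Solving the resulting linear inequality for $c$ yields the stated bound $c<\frac{450\degree-4\arcsin\alpha}{\arcsin\alpha}\,|U|+\frac{2\arcsin\alpha-90\degree}{\arcsin\alpha}$, and I would not anticipate this final bookkeeping being delicate once the angular lemma is in place.
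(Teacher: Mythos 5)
Your proposal diverges from the paper's actual argument, which is \emph{not} a local rotational count around the vertices of $U$: the paper (following Biniaz) picks one representative vertex per component of $D_C(S)-U$, forms $V$, and shows via Lemma~\ref{teo:innerpath} (a path between the two boundary points of an empty homothet, built by induction on the interior points) that $V$ is an independent set of the auxiliary triangulation $D_C(U\cup V)$; the bound then comes from Theorem~\ref{teo:independent}, a global Euler-formula/``distinguished angles'' count over the faces of that triangulation, whose only geometric input is Lemma~\ref{teo:angles} ($\measuredangle abc+\measuredangle cda\le 360\degree-2\arcsin\alpha$ for adjacent faces). So the two routes are genuinely different, and yours has two gaps. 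First, your entire geometric core --- the claim that two neighbours of $u\in U$ lying in different components are separated by a face-angle of at least $\arcsin\alpha$ at $u$ --- is left unproved, and you yourself flag it as ``where essentially all of the difficulty lies''; it is not a known lemma you can cite, and it is not clear it is even true, since individual Delaunay face angles at $u$ can be arbitrarily small and the ``fan of intervening triangles would produce an edge'' step is exactly the kind of global connectivity statement that needs the inner-path lemma, not a local angle bound.

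Second, and more decisively, even granting that lemma the bookkeeping cannot produce the stated constants. Your count is: at least $3$ jumps per component, at least $\arcsin\alpha$ per jump, at most $360\degree$ of angle per vertex of $U$, giving $3c\cdot\arcsin\alpha\le 360\degree\,|U|+O(1)$, i.e.\ $c\le\frac{120\degree}{\arcsin\alpha}|U|+O(1)$. Specializing to $\alpha=1$ (disks, $\arcsin\alpha=90\degree$) this gives $c\le\frac{4}{3}|U|+O(1)$, whereas the theorem asserts $c<|U|+1$, i.e.\ $1$-toughness; the hull corrections you invoke only affect the additive constant, not the leading coefficient. So the claim that ``$450\degree-4\arcsin\alpha$ and the rest drop out'' of this scheme is false as stated; to salvage it you would need either four jumps per component or a $120\degree$ separation per jump, neither of which you assert or justify. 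I recommend following the paper's route: prove Lemma~\ref{teo:angles}, run the distinguished-angle count to get the independent-set bound of Theorem~\ref{teo:independent}, and transfer it to components via Lemma~\ref{teo:innerpath}.
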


Note that as $\alpha$ goes to $1$ we get that $D_C(S)$ is $1$-tough, as was shown in \cite{simpletough} for Delaunay triangulations with respect to disks. We will need the following geometric lemma, which generalizes a well-known angular property of standard Delaunay triangulations. 

\begin{lemma}\label{teo:angles}
Let $C\subset\mathbb{R}^2$ an $\alpha$-fat convex body and $S\subset\mathbb{R}^2$ a finite point set. Suppose that $abc$ and $cda$ are two adjacent bounded faces of $D_C(S)$. We have that \[\measuredangle abc+\measuredangle cda\leq360\degree-2\arcsin{\alpha}.\]
\end{lemma}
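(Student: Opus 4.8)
The plan is to mimic the classical argument that in an ordinary Delaunay triangulation the opposite angles of two faces sharing an edge sum to at most $180\degree$, but to account for the distortion introduced by $C$ not being a disk. Let $abc$ and $cda$ be the two adjacent faces sharing edge $ac$, with $b$ and $d$ on opposite sides of the line through $a$ and $c$. By the definition of $D_C(S)$, there is a homothet $C_1$ of $C$ containing $a,b,c$ and no other point of $S$, and likewise a homothet $C_2$ containing $a,c,d$ and no other point of $S$. The key point is that $d\notin C_1$ and $b\notin C_2$: otherwise the corresponding triangle would not be a face of the Delaunay graph (this uses that $S$ is in $C$-general position so the relevant homothets are "empty" in the strong sense). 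So I first want to convert the emptiness of $C_1$ and $C_2$ into a statement about where $b$ and $d$ can lie relative to the arcs of $\partial C_1$ and $\partial C_2$ cut off by the chord $ac$.

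Next I would set up the angular bookkeeping. For an ordinary disk, the inscribed-angle theorem says that a point $p$ on the far arc determined by chord $ac$ sees the chord at an angle that, together with the angle subtended from any point on the near arc, sums to exactly $180\degree$; a point outside the disk on the far side sees it at a strictly smaller angle. I would replace the disk by the circumscribed ball $B(x,r)\supseteq C_1$ and the inscribed ball $B(x,\alpha r)\subseteq C_1$ guaranteed by $\alpha$-fatness. The chord $ac$ of $C_1$ lies inside $B(x,r)$, and because $C_1$ contains $B(x,\alpha r)$, the chord cannot be too "close to the boundary" of $B(x,r)$: its distance from the center is controlled, and hence the arc of $\partial B(x,r)$ on the far side subtends an angle at $a$ and $c$ that is bounded below by something like $\arcsin\alpha$. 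The plan is: because $d\notin C_1$, the point $d$ lies outside $C_1$, and I want to show $\measuredangle adc$ is at least $\arcsin\alpha$ less than the angle it would need to be to "complete" the inscribed-angle relation with $\measuredangle abc$. Symmetrically for $b$ relative to $C_2$. Adding the two inequalities should give $\measuredangle abc + \measuredangle cda \le 360\degree - 2\arcsin\alpha$, with the two deficits of $\arcsin\alpha$ coming from the two separate fatness estimates.

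Concretely, the cleanest route is probably: (i) fix the chord $ac$; (ii) observe that a homothet of $C$ which contains $a$ and $c$ and whose circumscribing ball has radius $r$ has $|ac|\le 2r$, and since it also contains a ball of radius $\alpha r$, an elementary estimate shows the two arcs of the circumscribing ball cut off by $ac$ each subtend an angle $\ge 2\arcsin\alpha$ at points on the opposite arc — more precisely, the half-angle subtended satisfies a lower bound in terms of $\alpha$; (iii) since $d\notin C_1\supseteq B(x,\alpha r)$ and $d$ is on the far side of $ac$ from $b$, the angle $\measuredangle adc$ is at most $180\degree - \arcsin\alpha - \measuredangle abc$ adjusted appropriately — here one compares $\measuredangle adc$ to the angle subtended from the far arc of the circumscribing ball of $C_1$. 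I expect the main obstacle to be exactly this step (iii): making rigorous the claim that "being outside $C_1$ on the far side of the chord" forces the viewing angle down by at least $\arcsin\alpha$, since $C_1$ is only squeezed between two concentric balls and need not be rotationally symmetric, so the chord $ac$ can be positioned asymmetrically inside the annulus. I would handle this by reducing to the worst case where $ac$ is a chord of the inner ball $B(x,\alpha r)$ (the constraint $B(x,\alpha r)\subseteq C_1$ is what prevents $b,d$ from being "pulled in"), and then a direct inscribed-angle computation on $B(x,\alpha r)$ versus $B(x,r)$ yields the $\arcsin\alpha$ deficit. Summing the contributions from $C_1$ and $C_2$ completes the proof; note that neither strict convexity nor smoothness of $C$ is needed for this lemma, only $\alpha$-fatness, which is consistent with the hypotheses as stated.
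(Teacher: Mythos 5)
Your proposal has a genuine gap at exactly the step you flagged as the main obstacle, and the difficulty is not just technical: the per-face inequality you are aiming for is false. You want to extract a deficit of $\arcsin\alpha$ from each face separately, by arguing that the emptiness of a homothet $C_1$ circumscribing the face $abc$ forces $\measuredangle adc$ down relative to $\measuredangle abc$ via an inscribed-angle comparison on the balls $B(x,\alpha r)\subseteq C_1\subseteq B(x,r)$. But the inscribed-angle theorem requires $a,b,c$ to lie \emph{on a circle}; knowing only that they lie on $\partial C_1$, i.e.\ somewhere in the closed annulus between the two balls, gives no control whatsoever on $\measuredangle abc$ (it can be anything in $(0\degree,180\degree)$). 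Moreover, the constraint that $C_1$ imposes on $d$ can be vacuous on $d$'s side of the chord: the inscribed ball $B(x,\alpha r)$ may lie entirely on $b$'s side of the line $ac$, in which case $\operatorname{conv}(B(x,\alpha r)\cup\{a,c\})$ has zero angular opening at $a$ toward $d$, and $d$ can be arbitrarily close to the open segment $ac$. Then $\measuredangle abc+\measuredangle adc$ can exceed $180\degree$ by nearly $180\degree$, so no fixed deficit per face exists. A secondary issue: the existence of an \emph{empty} homothet through all three vertices of a face is not part of the definition of $D_C(S)$ (which is defined via edges), and is not available under the lemma's hypotheses, which assume only that $C$ is an $\alpha$-fat convex body.

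The paper's proof avoids both problems by using only the shared \emph{edge} $(a,c)$: by definition there is a single homothet $C'$ with $a,c\in\partial C'$ containing no other point of $S$, hence excluding \emph{both} $b$ and $d$. After normalizing so that $\alpha B^2\subset C'\subset B^2$, convexity gives $b,d\notin\operatorname{conv}(\alpha B^2\cup\{a,c\})$, and one pushes $b$ and $d$ onto the boundary of this hull (which only increases the angles); each angle is then maximized as the corresponding point approaches $a$ or $c$, giving $\measuredangle abc+\measuredangle cda\leq 360\degree-\Theta$, where $\Theta$ is the hull's angle at $a$. The point is that $\Theta$ is shared between the two sides of the chord, so the two deficits are coupled rather than independent, and $\Theta\geq 2\arcsin\alpha$ follows from the tangent lines from $a$ (with $|a|\le 1$) to the disk $\alpha B^2$. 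If you want to salvage your write-up, replace the two face-circumscribing homothets by this one edge-witnessing homothet and redo the angle bookkeeping on the hull $\operatorname{conv}(\alpha B^2\cup\{a,c\})$.
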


\begin{proof}
The points $b$ and $d$ lie on different sides of the line that goes through $a$ and $c$. Also, 
since $(a,c)$ is an edge of $D_C(S)$, there is an homothet $C'$ of $C$ that contains $a$ and $c$ but contains neither $b$ nor $d$, we can actually choose $C'$ so that $a$ and $c$ lie on its boundary. This is all the information that we need in order to deduce the result.

By translating and rescaling, we may assume that $\alpha B^2\subset C'\subset B^2$. The points $a$ and $c$ are not contained in $\alpha B^2$, since they lie on the boundary of $C$. The fact that $C$ is convex implies that the convex hull $\text{conv}(\alpha B^2\cup\{a,c\})$ does not contain $b$ and $d$ (see figure \ref{fig:6} a). It is possible to slide $b$ and $d$ until they lie on the boundary of $\text{conv}(\alpha B^2\cup\{a,c\})$ without decreasing the values of $\measuredangle abc$ and  $\measuredangle cda$, so we may and will assume that they lie on said boundary. By a similar argument, it suffices to prove the inequality under the assumption that $a$ and $c$ lie on the boundary of $B$ (see figure \ref{fig:6} b).

\begin{figure}[!htbp]
\centering
\includegraphics[scale=0.6]{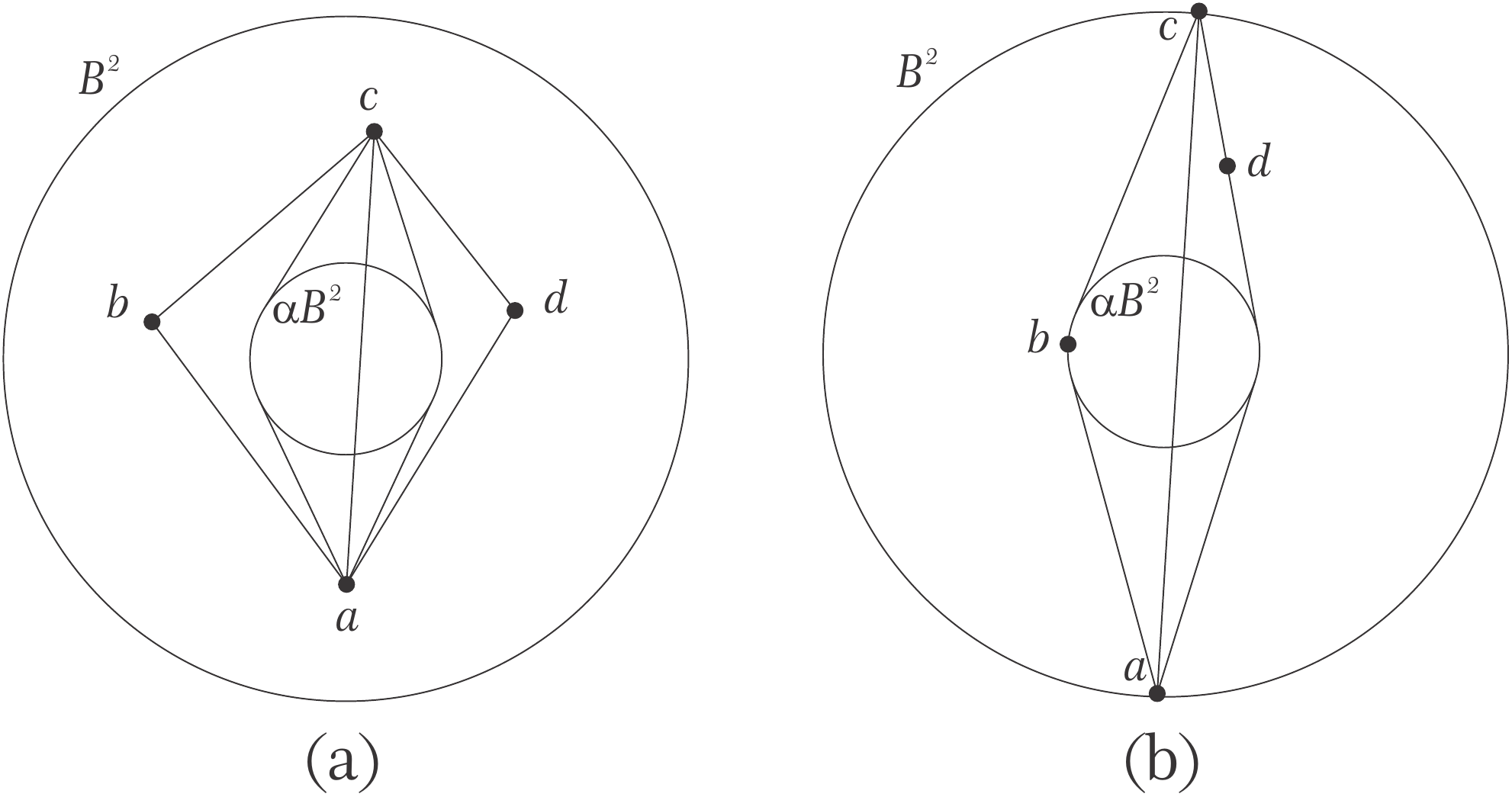}
\caption{Configuration in the proof of Lemma \ref{teo:angles}}
\label{fig:6}
\end{figure}

It is not hard to see that $\measuredangle abc$ grows larger as $b$ gets closer to either $a$ or $c$. Similarly, $\measuredangle cda$ grows larger as $d$ gets closer to either $a$ or $c$.  Thus, $\measuredangle abc+\measuredangle cda\leq 360\degree-\Theta$, where $\Theta$ is the measure of the angle at $a$ (or, equivalently, $c$) of $\text{conv}(\alpha B^2\cup\{a,c\})$. A simple calculation shows that $\Theta\geq 2\arcsin\alpha$, with equality if an only if the segment joining $a$ to $c$ goes through the closure of $\alpha B$.
\end{proof}

Instead of trying to prove Theorem~\ref{teo:tough} directly, we first bound the size of an independent set\footnote{A set of vertices of a graph forms an \textit{independent set} if no two of them are adjacent.} in $D_C(S)$.

We return to the proof of 

\begin{theorem}\label{teo:independent}
Let $C$ and $S$ be as in the statement of Theorem~\ref{teo:tough} and $I\subset S$ an independent set of vertices of $D_C(S)$. Then \[|I|<\frac{450\degree-4\arcsin{\alpha}}{450\degree-3\arcsin{\alpha}}|S|+\frac{90\degree-2\arcsin{\alpha}}{450\degree-3\arcsin{\alpha}}.\]
\end{theorem}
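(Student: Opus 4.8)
The argument will rest on three facts about $D:=D_C(S)$. Under the hypotheses of Theorem~\ref{teo:tough} it is a triangulation (as recalled in Section~\ref{sec:delaunay}); since $I$ is independent, every bounded face of $D$ has at most one vertex in $I$; and, by Lemma~\ref{teo:angles}, whenever two bounded faces of $D$ share an edge the two angles opposite that edge sum to at most $360\degree-2\arcsin\alpha$. Write $n:=|S|$, let $h$ be the number of vertices of $S$ on the convex hull, and put $J:=S\setminus I$ and $\phi:=\arcsin\alpha$. Euler's formula for a triangulation of a point set gives $2n-h-2$ bounded faces and $3n-h-3$ edges, so the angles of all bounded faces sum to $180\degree(2n-h-2)$; grouped by vertex, the same total equals $360\degree$ at each interior vertex plus the interior hull angle (which is less than $180\degree$) at each hull vertex.

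The plan is a discharging argument on angles. Call a bounded face \emph{mixed} if it has exactly one vertex in $I$ and \emph{pure} otherwise; by independence these are the only two types, so every face has at least two vertices in $J$ and at least one $J$--$J$ edge. For $v\in I$, list the bounded faces at $v$ rotationally as $T_1,\dots,T_r$, where $r=\deg_D(v)$ if $v$ is interior and $r=\deg_D(v)-1$ if $v$ is on the hull; consecutive faces share a \emph{spoke} at $v$, and the edge of each $T_j$ opposite $v$ (its \emph{link edge}), if interior, is shared with a face $T_j^{\ast}$ not containing $v$, because $v$ and the apex of $T_j^{\ast}$ lie on opposite sides of that link edge. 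Since $I$ is independent, the spokes and the link edges belonging to distinct vertices of $I$ are pairwise distinct, so Lemma~\ref{teo:angles} may be applied to every pair $(T_j,T_{j+1})$ and every pair $(T_j,T_j^{\ast})$, simultaneously over all $v\in I$, without overcounting. Adjoining to these the identities that the three angles of each $T_j$ sum to $180\degree$ and that the angles of $T_1,\dots,T_r$ at $v$ add up to the full angle at $v$, the resulting system of inequalities would collapse to a single linear inequality relating $|I|$, $\sum_{v\in I}\deg_D(v)$, the angle total $180\degree(2n-h-2)$, and a bounded collection of convex-hull correction terms. Combining that with the bound $\sum_{v\in I}\deg_D(v)\le 2n-h-2$ (up to hull terms) — which itself falls out of the angle total, since a vertex of $I$ absorbs its entire $360\degree$ — and substituting $n=|I|+|J|$ would yield an inequality of the shape $(450\degree-3\phi)\,|I|<(450\degree-4\phi)\,|S|+(90\degree-2\phi)$, which is exactly the claimed bound after dividing by $450\degree-3\phi$. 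Here the $450\degree=360\degree+90\degree$ records a full turn around an interior vertex together with the extreme slack $90\degree=\arcsin 1$ of Lemma~\ref{teo:angles}, while the coefficients $4$ and $3$ of $\phi$ come from the face count $2n-h-2$.

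The main obstacle is exactly this combination: around a single vertex of $I$, summing the inequalities of Lemma~\ref{teo:angles} over the spokes only (or over the link edges only) telescopes to a vacuous statement once the within-face angle identities are invoked, so the argument only has content when the spoke inequalities, the link-edge inequalities and the face identities are weighted in the right proportion and summed globally over all of $I$; pinning down that weighting is the crux. Running alongside it is the bookkeeping for convex-hull vertices — a hull vertex in $I$ has only $\deg_D(v)-1$ incident faces and total angle below $180\degree$ rather than $360\degree$, a hull vertex in $J$ shifts the face and edge counts, and each hull edge borders a single face — and it is this hull bookkeeping that produces the additive term $\tfrac{90\degree-2\arcsin\alpha}{450\degree-3\arcsin\alpha}$.
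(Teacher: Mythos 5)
Your outline has the right ingredients (Lemma~\ref{teo:angles}, an angle count, Euler's formula), but it is not a proof: you state yourself that the naive summations of the spoke and link-edge inequalities ``telescope to a vacuous statement'' and that finding the correct weighting ``is the crux'' --- and then you do not find it. Asserting that the system ``would collapse'' to an inequality with exactly the coefficients $450\degree-3\arcsin\alpha$, $450\degree-4\arcsin\alpha$ and $90\degree-2\arcsin\alpha$ is reverse-engineering the answer, not deriving it. This is a genuine gap, not a presentational one.

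The idea you are missing is a change of underlying graph. The paper does not organize the count around the vertices of $I$ in $D_C(S)$; it passes to the subgraph $D'$ of $D_C(S\cup\{v,w\})$ induced by $S'\cup\{v,w\}$, where $S'=S\setminus I$ and $v,w$ are two far-away auxiliary points chosen so that $D_C(S\cup\{v,w\})\supseteq D_C(S)$ and the outer face of $D'$ is the triangle $uvw$. Because $I$ is independent, each point of $I$ sits inside exactly one bounded face of $D'$ (a ``bad face''), and every other bounded face of $D'$ is a face of the full triangulation (a ``good face''). One then assigns to each edge of $D'$ the one or two \emph{opposite} angles in the full triangulation; Lemma~\ref{teo:angles} bounds each edge's contribution by $360\degree-2\arcsin\alpha$, while grouping the same angles by face gives exactly $180\degree$ per good face and $360\degree$ per bad face (the full turn around the enclosed $I$-vertex). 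Euler's formula applied to $D'$ (giving $|S|+g+1$ edges) and the incidence bound $g\le 3(|S|-b)$ then yield the stated inequality directly, with the additive constant coming from the three outer edges of the triangle $uvw$ rather than from ad hoc convex-hull corrections. Your ``$J$--$J$ edges'' are precisely the edges of $D'$, so you were one step away; but without introducing $D'$ and its face structure, the weighting problem you flag has no clean resolution, and the argument as written does not close.
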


\begin{proof}
Let $S'=S\backslash I$ and notice that at least one vertex $u$ of the outer face of $D_C(S)$ must belong to $S'$. For each edge of $D_C(S)$ consider an homothet of $C$ that contains its endpoints and no other element of $S$, and take two points $v,w\notin S$ which are not contained in any of those circles and such that the triangle with vertices $u,v$ and $w$ contains all points of $S$. By the choice of $v$ and $w$, the Delaunay triangulation $D_C(S\cup\{v,w\})$ contains $D_C(S)$ as a subgraph (see figure \ref{fig:7}). Let $D'$ the subgraph of $D_C(S\cup\{v,w\})$ induced by $S'\cup\{v,w\}$. Since $I$ is an independent set of $D_C(S\cup\{v,w\})$ and contains no vertex of the outer face, each point in $I$ corresponds to a bounded face of $D'$ which is bounded by a cycle and is not a face of $D_C(S\cup\{v,w\})$. The previous observation shows, in particular, that $D'$ is connected. Following the terminology in~\cite{simpletough}, we classify the bounded faces of $D'$ as \textit{good faces} if they are also faces of $D_C(S\cup\{v,w\})$, and as \textit{bad faces} if they contain one point of $I$; note that each bounded face falls in exactly one of these two categories. Let $g$ and $b=|I|$ be the number of good and bad faces, respectively.

We will asign some \textit{distinguished angles} to each edge of $D'$. If $(p,q)$ is an interior edge of $D'$ then it is incident to two bounded faces $pqr$ and $qps$ of $D_C(S\cup\{v,w\})$; we assign the edge $(p,q)$ to the angles $\angle qrp$ and $\angle psq$. Each exterior edge $(p,q)$ is incident to a single such face $pqr$; we assign $(p,q)$ to $\angle qrp$ (see figure \ref{fig:7}). On one hand, all three angles of any good face are distinguished and add up to $180\degree$. On the other hand, every bad face contains a point of $I$ and all angles of $D_C(S\cup\{v,w\})$ which are anchored at that point are distinguished and add up to $360\degree$. The total measure of the distinguished angles is thus \[T=g\cdot180\degree+b\cdot360\degree.\]

This quantity can also be bounded using Lemma~\ref{teo:angles}, as follows. Each edge of $D'$ is assigned to at most two distinguished angles, which have total measure at most $360\degree-2\arcsin{\alpha}$ (indeed, this is trivial if there is only one such angle, and it follows from the lemma if there are two). By Euler's formula, the number of edges of $D'$ is $|S'\cup\{v,w\}|+(b+g+1)-2=|S|+g+1$. Each of the three edges on the outer face is assigned to only one angle, so summing over all edges we get \[T<(360\degree-2\arcsin{\alpha})(|S|+g-2)+3\cdot 180\degree,\] whence \[g\cdot180\degree+b\cdot360\degree<(360\degree-2\arcsin{\alpha})(|S|+g-2)+540\degree.\] Since each element of $I$ is incident to at least three faces of the triangulation $D_C(S\cup\{v,w\})$ we get, again by Euler's formula, that \[3(|S|+2)-6\geq g+3b,\] so $g\leq 3(|S|-b)$. We momentarily set $\beta=2\arcsin{\alpha}$, then the two inequalities yield \[b\cdot 360\degree<(360\degree-\beta)|S|+(180\degree-\beta)(3|S|-3b)-2(360\degree-\beta)+540\degree,\] \[(900\degree-3\beta)b<(900\degree-4\beta)|S|-(180\degree-2\beta),\] \[|I|= b<\frac{900\degree-4\beta}{900\degree-3\beta}|S|-\frac{180\degree-2\beta}{900\degree-3\beta},\] and the result follows.
\end{proof}

\begin{figure}[!htbp]
\centering
\includegraphics[scale=0.85]{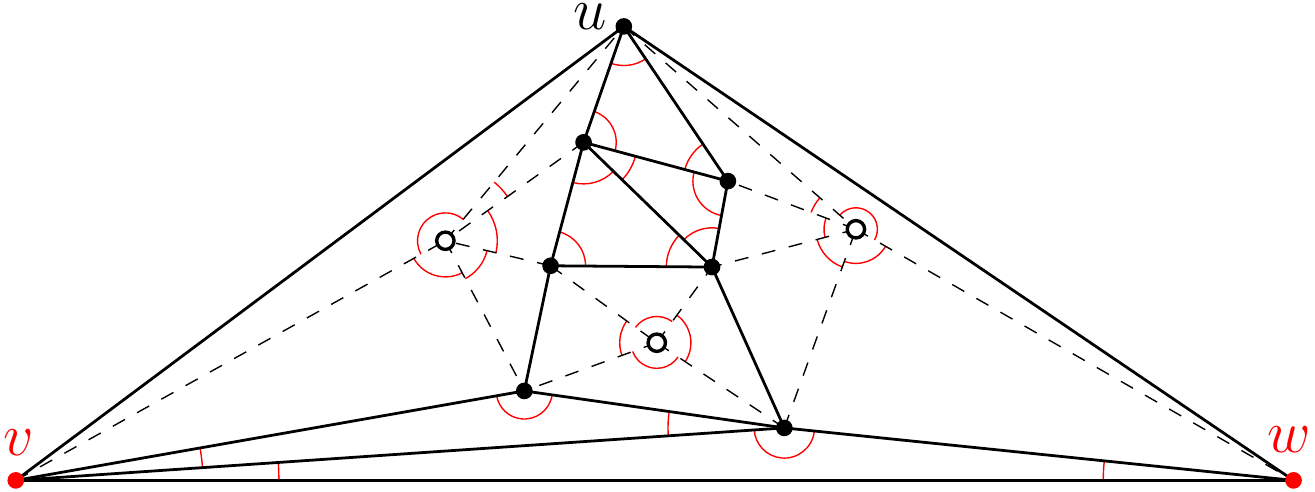}
\caption{An example of how the Delaunay triangulation $D_C(S\cup\{u,w\})$} might look. All distinguished angles are marked in red. \textbf{This figure, which appeared in \cite{simpletough}, was provided to us by Ahmad Biniaz.}
\label{fig:7}
\end{figure}

The following simple lemma extends a result used in~\cite{simpletough}.

\begin{lemma}\label{teo:innerpath}
Let $C\subset\mathbb{R}^2$ a strictly convex body and $S\subset\mathbb{R}^2$ a finite point set in $C$-general position. Consider an homothet $C'$ of $C$ whose boundary contains exactly two points, $p$ and $q$ say, of $S$. Then $p$ and $q$ are connected by a path in $D_C(S)$ that lies in $C'$.
\end{lemma}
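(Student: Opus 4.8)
The natural approach is induction on $|S \cap C'|$, the number of points of $S$ inside $C'$. The base case is $|S \cap C'| = 2$, i.e. $C'$ contains only $p$ and $q$ (both on its boundary); then $(p,q)$ is by definition an edge of $D_C(S)$ contained in $C'$, so the trivial one-edge path works. For the inductive step, suppose $C'$ contains at least one point of $S$ in its interior. The plan is to shrink $C'$ continuously towards a homothet whose boundary still passes through $p$ but which now captures an interior point on its boundary, thereby producing a new homothet with fewer interior points and to which the inductive hypothesis applies.

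\textbf{Key steps.} First I would set up a one-parameter family of homothets $C'_t$, $t \in [0,1]$, with $C'_0 = C'$, obtained by contracting $C'$ towards the point $p$ (a homothety with center $p$ and coefficient decreasing from $1$); each $C'_t$ still has $p$ on its boundary. As $t$ increases, interior points of $S$ are gradually expelled from $C'_t$; let $t^\ast$ be the first time some point $r \in S$ lands on the boundary of $C'_{t^\ast}$. Because $S$ is in $C$-general position, the boundary of $C'_{t^\ast}$ contains at most $d+2 = 4$ points of $S$, but more importantly I want to arrange the contraction so that at the critical moment exactly one new point $r$ appears on the boundary while $p$ remains on it — a genericity/perturbation argument (slightly varying the center of contraction along the boundary near $p$, or invoking strict convexity of $C$) handles the degenerate case where several points would hit the boundary simultaneously. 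Now $C'_{t^\ast}$ is a homothet of $C$ whose boundary contains $p$ and $r$ and whose interior is a strict subset of the interior of $C'$, so $|S \cap C'_{t^\ast}| < |S \cap C'|$. If the boundary of $C'_{t^\ast}$ contains exactly the two points $p, r$, the inductive hypothesis directly gives a path from $p$ to $r$ inside $C'_{t^\ast} \subset C'$. Symmetrically, contracting towards $q$ yields a homothet with $q$ and some point $r'$ on its boundary and a path from $q$ to $r'$ inside $C'$. The remaining task is to connect $r$ to $r'$: one performs the same argument but now contracting a homothet that has both $r$ and a target point on its boundary, repeatedly, and checks that the union of all these sub-paths stays inside $C'$ and eventually links $p$ to $q$ — essentially one runs the shrinking process as a recursion whose recursion tree has all homothets nested inside $C'$.

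\textbf{Alternative cleaner formulation.} Rather than juggling several contraction centers, I would prefer the following: induct on $|S\cap C'|$; if there is an interior point, pick any interior point $s \in S$ and expand a tiny homothet of $C$ around $s$ until its boundary first touches a second point of $S$, then keep deforming (translating/scaling) within $C'$ to obtain a homothet $C''\subset C'$ whose boundary contains $s$ together with at least one of $p,q$ — say $p$ — and which is a proper subset of $C'$. Apply induction to $C''$ (boundary points $p,s$, fewer interior points) to get a path from $p$ to $s$ in $C''\subset C'$, and apply induction to a second homothet witnessing an edge or path from $s$ to $q$ inside $C'$; concatenate. The key invariant to maintain throughout is that every homothet produced lies inside $C'$, which is where one uses that $C'$ is convex and that all the deformations only ever shrink or slide within it.

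\textbf{Main obstacle.} The genuinely delicate point is the degenerate-configuration bookkeeping: ensuring that the shrinking/expanding process can always be steered so that a \emph{single} new point of $S$ appears on the boundary at the critical moment (so that the boundary-has-exactly-two-points hypothesis of the lemma — and hence of the induction — is preserved), and that the target point one wants to reach, $q$ (or $s$), really does get captured before the homothet degenerates. This is where strict convexity of $C$ and the $C$-general-position assumption on $S$ do the real work, and writing this carefully — parametrizing the admissible deformations and arguing that the relevant "first contact" happens generically with one point — is the crux of the argument; the graph-theoretic part (concatenating paths, staying inside $C'$) is routine once this is in place.
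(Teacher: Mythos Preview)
Your overall strategy---induction on the number of points of $S$ in the interior of $C'$---is exactly the paper's, and your ``alternative cleaner formulation'' is essentially the right argument. The paper's execution, however, is simpler than either of your plans and completely avoids the ``connect $r$ to $r'$'' complication in your first approach: it begins by fixing an arbitrary interior point $r$, then dilates $C'$ with center $p$ until that specific $r$ lands on the boundary (call the result $C_1$), and separately dilates $C'$ with center $q$ until the same $r$ lands on the boundary (call it $C_2$). Both $C_1, C_2 \subset C'$, each has at most $t-1$ interior points, and after a small perturbation (using strict convexity) each has exactly two boundary points of $S$, namely $\{p,r\}$ and $\{q,r\}$ respectively. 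The inductive hypothesis then gives paths $p$--$r$ in $C_1$ and $q$--$r$ in $C_2$, and these concatenate directly.

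Your first plan, by contrast, shrinks towards $p$ until the \emph{first} interior point $r$ hits, and towards $q$ until the first point $r'$ hits; when $r \neq r'$ you are left needing a sub-homothet of $C'$ with $r, r'$ on its boundary and strictly fewer interior points, and no such homothet is handed to you by the construction---so the induction does not obviously close. The one-line fix (which your alternative is groping towards) is precisely the paper's: choose the intermediate point \emph{before} you shrink, and note that dilating $C'$ itself with center $p$ is already the simplest way to produce a homothet inside $C'$ with both $p$ and your chosen point on its boundary.
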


\begin{proof}
The proof is by induction on the number of points $t$ contained in the interior of $C'$. If $t=0$, then $p,q$ are adjacent in $D_C(S)$ and we are done. Otherwise, let $r$ be a point in the interior of $C'$ and apply a dilation with center $p$ until the image of $C'$ has $r$ on its boundary, we call this homothet $C_1$, repeat this process but now with center $q$ and call the resulting homothet $C_2$. This way, $p$ and $r$ lie on the boundary of $C_1$, while $q$ and $r$ lie on the boundary of $C_2$; notice also that $C_1,C_2\subset  C'$. Since $C$ is strictly convex, we can ensure that the boundaries of $C_1$ and $C_2$ contain no point of $S$ other than $p,r$ and $q,r$, respectively, by taking a small perturbation of the homothets if necessary. Notice that the interiors of each of $C_1,C_2$ contain at most $t-1$ points of $S$. Thus, by the inductive hypothesis, we can find two paths joining $p$ to $r$ and $q$ to $r$ inside $C_1$ and $C_2$, respectively. The union of the two paths we just mentioned contains a path from $p$ to $q$ that lies completely in $C'$, as desired. See figure \ref{fig:8}.
\end{proof}

\begin{figure}[!htbp]
\centering
\includegraphics[scale=0.44]{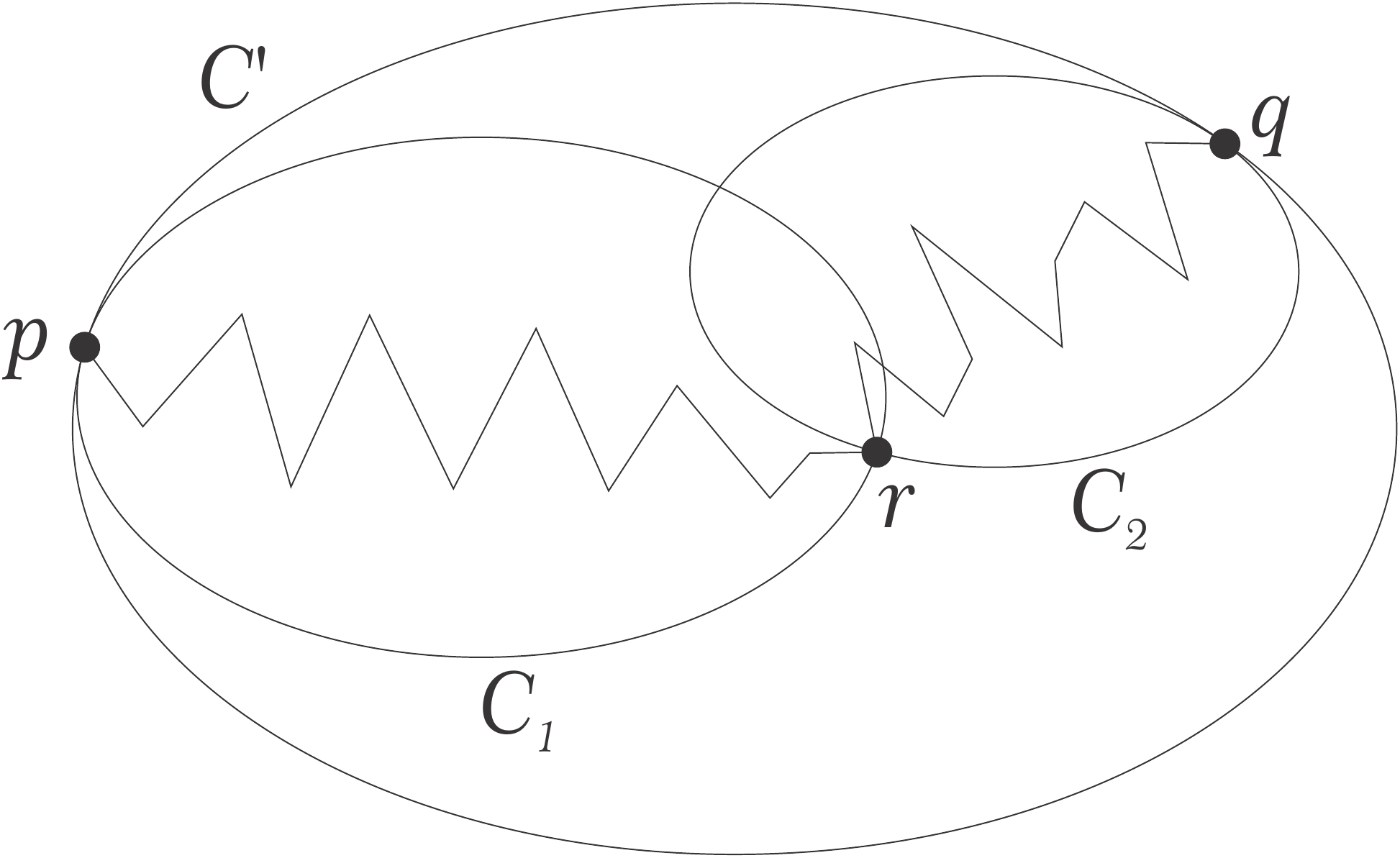}
\caption{Configuration in the proof of Lemma \ref{teo:innerpath}.}
\label{fig:8}
\end{figure}

Theorem~\ref{teo:tough} is an easy consequence of Theorem~\ref{teo:independent} and Lemma~\ref{teo:innerpath}. Indeed, consider an arbitrary set of vertices $U\subset S$ and choose a representative vertex from each component of $D_C(S)-U$. Let $V$ be the set of all representative vertices and consider the Delaunay triangulation $D_C(U\cup V)$. Suppose that there is an edge in this graph between two vertices $p$ and $q$ of $V$, then there is an homothet $C'$ such that $C'\cap (U\cup V)=\{p,q\}$. Furthermore, by applying a slight perturbation if necessary, we may assume that $C'$ contains no other point of $S$ on its boundary. Lemma~\ref{teo:innerpath} now tells us that there is a path in $D_C(S)$ joining $p$ and $q$ which lies in $C'$. Since $p$ and $q$ lie in different components of $D_C(S)-U$, this path must contain at least one vertex from $U$, which must therefore lie in $C'$. This contradiction shows that $V$ is an independent set of $D_C(U\cup V)$.  By Lemma~\ref{teo:independent}, \[|V|<\frac{450\degree-4\arcsin{\alpha}}{450\degree-3\arcsin{\alpha}}|(V\cup U)|-\frac{90\degree-2\arcsin{\alpha}}{450\degree-3\arcsin{\alpha}},\] \[|V|<\frac{450\degree-4\arcsin{\alpha}}{\arcsin{\alpha}}|U|-\frac{90\degree-2\arcsin{\alpha}}{\arcsin{\alpha}},\] but $|V|$ is just the number of components of $D_C(S)-U$, so we are done.

\subsection[Large matchings in Delaunay graphs]{Large matchings in $D_C(S)$}\label{sec:matchings}
For any graph $G$, let $o(G)$ denote the number of connected components of $G$ which have an odd number of vertices. The Tutte-Berge formula~\cite{tutte-berge} tells us that the size of the maximum matching in a graph $G$ with vertex set $V$ equals \[\frac{1}{2}\left(|V|-\max_{U\subset V}\{o(G-U)-|U|\}\right).\]

Combining Theorem~\ref{teo:tough} and the Tutte-Berge formula yields the main result of this sections.

\begin{theorem}\label{teo:matching}
Let $C\subset\mathbb{R}^2$ an $\alpha$-fat strictly convex body with smooth boundary and $S\subset\mathbb{R}^2$ a finite point set in $C$-general position such that no three points of $S$ lie on the same line. Then $D_C(S)$ contains a matching of size at least \[\left(\frac{1}{2}-\frac{450\degree-5\arcsin{\alpha}}{900\degree-6\arcsin{\alpha}}\right)|S|+\frac{45\degree-\arcsin{\alpha}}{450\degree-4\arcsin{\alpha}}\left(1+\frac{450\degree-5\arcsin{\alpha}}{450\degree-3\arcsin{\alpha}}\right).\]
Again, the result also holds if $C$ can be made $\alpha$-fat by an affine transformation.
\end{theorem}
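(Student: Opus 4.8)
The plan is to read this off from Theorem~\ref{teo:tough}, which controls the number of components of $D_C(S)-U$, together with the Tutte--Berge formula. Throughout write $a=\arcsin\alpha$; note $0<a\le 90\degree$, so $450\degree-5a\ge 0$ and $450\degree-3a>0$. Recall that under our hypotheses $D_C(S)$ is a triangulation, hence connected. Let $\nu$ be the size of a maximum matching of $D_C(S)$. By the Tutte--Berge formula $\nu=\tfrac12\bigl(|S|-d\bigr)$, where $d=\max_{U\subseteq S}\{o(D_C(S)-U)-|U|\}$; fix a set $U_0$ attaining this maximum.

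The core step is to bound $d$ from above, and for this I would combine two estimates on $o(D_C(S)-U_0)$. First, since the number of odd components is at most the total number of components, Theorem~\ref{teo:tough} gives $o(D_C(S)-U_0)<\frac{450\degree-4a}{a}|U_0|+\frac{2a-90\degree}{a}$, hence $d<\frac{450\degree-5a}{a}|U_0|+\frac{2a-90\degree}{a}$. Second, $D_C(S)-U_0$ has only $|S|-|U_0|$ vertices, so $o(D_C(S)-U_0)\le|S|-|U_0|$ and therefore $d\le|S|-2|U_0|$, i.e. $|U_0|\le\tfrac12(|S|-d)$. Substituting the second bound into the first (legitimate since the coefficient $\frac{450\degree-5a}{a}$ is nonnegative) gives
\[
d\ <\ \frac{450\degree-5a}{2a}\bigl(|S|-d\bigr)+\frac{2a-90\degree}{a},
\]
and solving this linear inequality for $d$ yields $d<\dfrac{(450\degree-5a)|S|-180\degree+4a}{450\degree-3a}$.

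Feeding this into the Tutte--Berge formula gives
\[
\nu\ =\ \frac{|S|-d}{2}\ >\ \frac12\left(|S|-\frac{(450\degree-5a)|S|-180\degree+4a}{450\degree-3a}\right)\ =\ \frac{a\,|S|+90\degree-2a}{450\degree-3a}.
\]
To finish I would only need to rewrite the right-hand side in the form stated in the theorem, using the two identities $\frac{a}{450\degree-3a}=\tfrac12-\frac{450\degree-5a}{900\degree-6a}$ and $\frac{90\degree-2a}{450\degree-3a}=\frac{45\degree-a}{450\degree-4a}\bigl(1+\frac{450\degree-5a}{450\degree-3a}\bigr)$, both of which are immediate from $900\degree-6a=2(450\degree-3a)$ and $900\degree-8a=2(450\degree-4a)$.

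There is no serious obstacle here — the statement is essentially a corollary of Theorem~\ref{teo:tough} — but two points deserve care. First, one must use \emph{both} bounds on $d$: Theorem~\ref{teo:tough} alone is increasing in $|U_0|$ and hence vacuous when the cut set is large, while the trivial inequality $o(D_C(S)-U_0)\le|S|-|U_0|$ is precisely what controls that regime, and the substitution above is this optimization carried out implicitly. Second, one should treat the case $U_0=\emptyset$ (equivalently, $D_C(S)-U_0$ connected) separately — there $o(D_C(S))\le 1$ because $D_C(S)$ is connected, which is harmless — since Theorem~\ref{teo:tough} is only informative when $D_C(S)-U$ is disconnected; one must likewise keep track of strict versus non-strict inequalities when passing to the integer-valued matching number.
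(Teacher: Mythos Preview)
Your proposal is correct and follows essentially the same route as the paper: both combine the Tutte--Berge formula with Theorem~\ref{teo:tough} and the trivial bound $o(D_C(S)-U)\le |S|-|U|$, then solve the resulting linear inequality. Your write-up is in fact more explicit than the paper's (which simply says the two inequalities ``can be seen to imply'' the conclusion), and your attention to the $U_0=\emptyset$ case is warranted, since the bound in Theorem~\ref{teo:tough} is vacuous there.
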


\begin{proof}
Let $U\subset S$ and notice that $o(D_C(S)-U)$ is at most the number of connected components of $D_C(S)-U$. Whence, Theorem \ref{teo:tough} implies that \[o(D_C(S)-U)<\frac{450\degree-4\arcsin{\alpha}}{\arcsin{\alpha}}|U|-\frac{90\degree-2\arcsin{\alpha}}{\arcsin{\alpha}}.\] Together with $o(D_C(S)-U)+|U|\leq S$, this can be seen to imply that  $|S|-(o(D_C(S)-U)-|U|)$ must be larger than \[\left(1-\frac{450\degree-5\arcsin{\alpha}}{450\degree-3\arcsin{\alpha}}\right)|S|+\frac{90\degree-2\arcsin{\alpha}}{450\degree-4\arcsin{\alpha}}\left(\frac{450\degree-5\arcsin{\alpha}}{450\degree-3\arcsin{\alpha}}+1\right),\] and the result follows.
\end{proof}

To conclude this sections, we obtain a weaker bound that holds under more general conditions.

\begin{theorem}\label{teo:matching2}
Let $C\subset\mathbb{R}^2$ be a strictly convex body. Then, for every finite set $S\subset\mathbb{R}^2$ we have that $f(C,2,S)\leq|S|-\lceil\frac{1}{3}(|S|-8)\rceil$.
\end{theorem}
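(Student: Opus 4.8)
The plan is to translate the statement into a lower bound on the size of a maximum matching of the Delaunay graph $D_C(S)$ and then invoke a classical planar matching theorem. The starting point is the equivalence noted in Section~\ref{sec:delaunay}: $f(C,2,S)=|S|-\nu(D_C(S))$, where $\nu(D_C(S))$ is the matching number of $D_C(S)$. Indeed, a $2^-/S$-homothet covers at most two points, and if it covers exactly two points $p,q$ then, by the definition of the Delaunay graph, $pq$ is an edge of $D_C(S)$; conversely every edge of $D_C(S)$ is witnessed by such a homothet. Hence a minimum cover by $2^-/S$-homothets is obtained from a maximum matching $M$ by adding one homothet for each unmatched point, for a total of $|M|+(|S|-2|M|)=|S|-|M|$ homothets, so it suffices to prove $\nu(D_C(S))\geq\lceil\tfrac13(|S|-8)\rceil$.

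I would then record the structure of $G:=D_C(S)$: it is a connected plane graph (standard for generalized Delaunay graphs of strictly convex bodies, see~\cite{generalizeddelaunay}), it has no isolated vertices once $|S|\geq 2$, and the vertices violating a minimum-degree-$3$ hypothesis are essentially the ones on $\partial\,\mathrm{conv}(S)$. The plan is to bound $\nu(G)$ using the Nishizeki--Baybars theorem~\cite{planarmatchings}, which gives a matching of size roughly $\tfrac{n+2}{3}$ in any connected planar graph of minimum degree at least $3$. To get rid of the low-degree vertices I would peel them off: while $G$ has a vertex $v$ of degree at most $2$, pick a neighbour $u$ of $v$, put $uv$ in a partial matching, and delete both $u$ and $v$. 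Each step removes two vertices and gains one matching edge, so after $t$ steps one is left with a (possibly disconnected) plane graph $G'$ on $|S|-2t$ vertices with minimum degree at least $3$, and $\nu(G)\geq t+\sum_K\nu(K)$ over the components $K$ of $G'$. Applying Nishizeki--Baybars to each component (connected, planar, $\delta\geq 3$, hence $|K|\geq 4$) and summing, a short computation $t+\sum_K\tfrac{|K|+2}{3}=\tfrac{|S|+t+2p}{3}\geq\tfrac{|S|}{3}$ (where $p$ is the number of components) finishes, with room to spare over $\lceil\tfrac13(|S|-8)\rceil$.

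The main obstacle is the bookkeeping of the peeling process together with the exceptional small cases of the matching bound. Deleting a high-degree vertex $u$ can create new vertices of degree at most $2$, so the process may cascade all the way down (this is exactly what happens when $S$ is in convex position and $G$ is outerplanar); one must verify that the "remove two, gain one" invariant persists and that the residual $G'$ genuinely has minimum degree $3$. Moreover the clean inequality $\nu(K)\geq\tfrac{|K|+2}{3}$ fails for a handful of small graphs (for instance on five vertices one can only guarantee $\nu(K)\geq 2$) and the asymptotic form of Nishizeki--Baybars needs a component of size past a small threshold, so components of $G'$ of bounded size must be handled separately; this is precisely what forces an additive loss, and $8$ is picked generously enough to absorb all such contributions. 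A more self-contained alternative that sidesteps quoting those exceptions is to apply the Tutte--Berge formula~\cite{tutte-berge} to $D_C(S)$ directly: for a maximal barrier $U$, contracting each component of $D_C(S)-U$ yields a bipartite planar graph between $U$ and the components, whose edge bound $2(|U|+o(D_C(S)-U))-4$ combined with the fact that interior vertices of $D_C(S)$ have degree at least $3$ limits the number of components, giving $o(D_C(S)-U)-|U|\leq\tfrac13(|S|+16)$; I expect this route to be the most robust, at the price of reproving a slice of the Nishizeki--Baybars argument by hand.
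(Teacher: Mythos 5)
Your reduction $f(C,2,S)\le |S|-\nu(D_C(S))$ is correct and is indeed how the paper begins, but the combinatorial core of your argument has a genuine gap: the ``remove two, gain one'' peeling invariant is false for general connected planar graphs, and you supply no geometric fact about $D_C(S)$ that rescues it. Deleting a matched pair can strand vertices whose entire remaining neighbourhood has just been removed; the star $K_{1,n-1}$ is connected and planar yet has matching number $1$, so no purely graph-theoretic peeling can yield $\nu\ge\frac{1}{3}(n-8)$. You flag that the invariant ``must be verified'' and that the residual graph ``genuinely has minimum degree $3$,'' but that verification is precisely the hard part, and it cannot be done without using the geometry of $C$ and $S$ (your remark that the low-degree vertices are ``essentially the ones on $\partial\,\mathrm{conv}(S)$'' presupposes that $D_C(S)$ is a triangulation, which requires smoothness and general-position hypotheses the theorem does not make). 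Your fallback via Tutte--Berge has the same defect: the step ``interior vertices of $D_C(S)$ have degree at least $3$'' is again unjustified in this generality, and the inequality $o(D_C(S)-U)-|U|\le\frac13(|S|+16)$ is asserted rather than derived.

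The paper resolves exactly this difficulty in the opposite direction: instead of peeling low-degree vertices away, it \emph{adds} a bounded number of auxiliary vertices to force minimum degree $3$ everywhere. The key geometric ingredient (Lemma~\ref{teo:fivepoints}) produces five directions on $\mathbb{S}^1$ such that, for $\gamma$ large, every $s\in S$ acquires at least three neighbours in $D_C(S\cup\{\gamma x_1,\dots,\gamma x_5\})$; after a minor patch to make the added vertices themselves have degree $3$, a single application of Nishizeki--Baybars to the (connected, $n\ge 10$, $\delta\ge 3$) augmented graph gives a matching of size $\lceil\frac13(|S|+2+t)\rceil$ with $t\le 5$, and deleting the $t$ extra vertices costs at most $t$ edges, which is where the additive $8$ comes from. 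To repair your write-up you would need either to import this augmentation idea or to prove, from the definition of $D_C(S)$ for strictly convex $C$, a structural statement strong enough to control the cascade of low-degree deletions; as written, the proposal does not contain such a statement.
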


\begin{proof}
We will essentially show that $D_C(S)$ (which is planar, but not necessarily a triangulations) can be turned into a planar graph of minimum degree at least three by adding a constant number of vertices, the theorem then follows from a result of Nishizeki and Baybars~\cite{planarmatchings}. 

For every $x$ (not necessarily in $S$) on the boundary of $C$, let $A_x$ be the smallest closed angular region which has $x$ as its vertex and contains $C$, and $\alpha_x\leq 180\degree$ be the measure of the angle that defines $A_x$. Let $a_x=(A_x-x)\cap\mathbb{S}^2$, $a_x$ is an arc of $\mathbb{S}^2$ determined by an angle of measure $\alpha_x$. See figure \ref{fig:9}.

\begin{figure}[!htbp]
\centering
\includegraphics[scale=0.52]{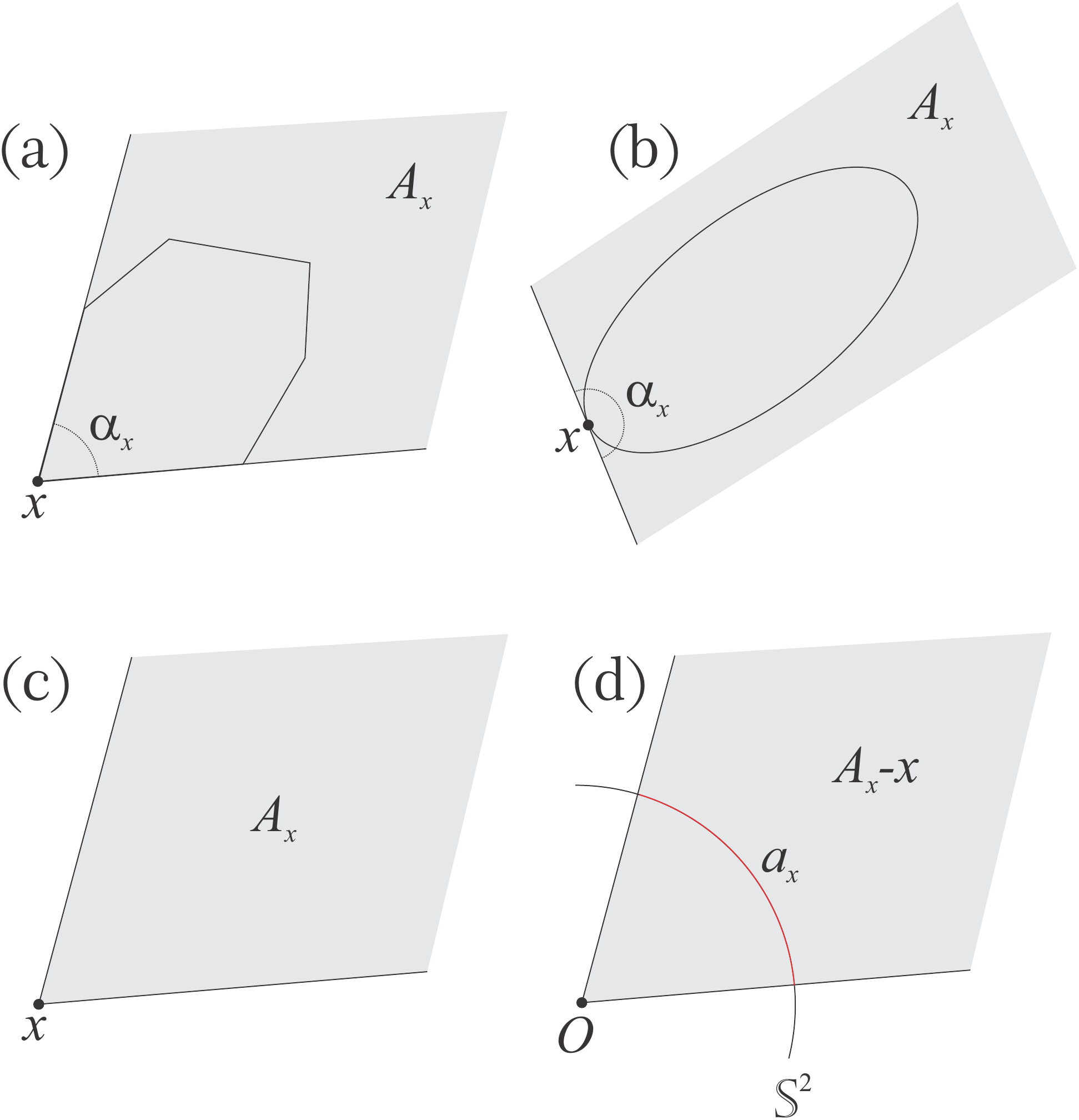}
\caption{(a),(b): two examples of $A_x$ and $\alpha_x$. (c),(d): how $A_x$, $A_x-x$ and $a_x$ might look.}
\label{fig:9}
\end{figure}

\begin{lemma}\label{teo:fivepoints}
There are five points in $\mathbb{S}^2$ such that, for every $x$ on the boundary of $C$, $a_x$ contains at least one of these points in its interior.
\end{lemma}

\begin{proof}
Let $x_1,x_2,...,x_r$ be distinct points on the boundary of $C$. The intersection $\cap^{r}_{i=1}A_{x_i}$ is a closed and convex polygonal region and a quick calculation shows that $\sum^{r}_{i=1}\alpha_{x_i}\geq (r-2)180\degree$, where the equality occurs if and only if $C$ is an $r$-agon with vertices $x_1,...,x_r$. Since the result is easily seen to be true if $C$ is either a triangle or a quadrilateral, we can assume that, for any distinct points $x,y,z$ and $w$ on the boundary of $C$, $\alpha_{x}+\alpha_{y}+\alpha_{z}>180\degree$ and $\alpha_{x}+\alpha_{y}+\alpha_{z}+\alpha_{w}>360\degree$. 

Let $A_{90\degree}$ be the set that consists of all points $x$ on the boundary of $C$ such that $\alpha_x\leq 90\degree$, then $|A_{90\degree}|\leq 3$. If $|A_{90\degree}|\leq 2$, we take four points in $\mathbb{S}^2$ such that they are the vertices of a square and that one of them is contained in the arc $a_x$ determined by one of the elements of $A_{90\degree}$. This set of four points hits the interiors of all but at most one of the arcs $a_x$, so it is possible to find five points in $\mathbb{S}^2$ which hit the interiors of all arcs. If $|A_{90\degree}|=3$, then $\sum_{x\in A_{90\degree}}\alpha_x>180\degree$. By choosing a square $Q$ with vertices in $\mathbb{S}^2$ uniformly at random, with positive probability $Q$ will be such that $v$ is contained in the interior of $a_x$ for more than two pairs $(v,x)$ where $v$ is a vertex of $Q$ and $x\in A_{90\degree}$. Since no arc $a_x$ with $x\in A_{90\degree}$ may contain more than one vertex of $Q$, every arc appears in at most one of the pairs. This implies that, with positive probability, the vertices of $Q$ hit the interior of every arc $a_x$ for $x\in A_{90\degree}$, but they clearly also hit the interior of every other $a_x$ and, thus, there is a set of four points (to which we can add any other point of $\mathbb{S}^2$ so that it has five elements) with the desired property.
\end{proof}

Let $x_1,x_2,x_3,x_4,x_5$ be five points as in Lemma~\ref{teo:fivepoints}. Consider a very large positive real number $\gamma$ to be specified later and let $S'=S\cup\{\gamma x_1,\gamma x_2,\dots,\gamma x_5\}$.

\begin{claim}
If $\gamma$ is large enough then every point of $S$ has degree at least $3$ in $D_C(S')$. 
\end{claim}

\begin{proof}
Let $s\in S$ and consider an arbitrary line $\ell$ with $\ell\cap S=\lbrace s\rbrace$ and an open halfplane $H$ determined by $\ell$, we show that if $\gamma$ is large enough then $s$ is adjacent to a point in $H$. Assume, w.l.o.g, that $\ell$ is vertical and that $H$ is the right half-plane determined by $\ell$ and let $x_H$ be the leftmost point of $C$. Observe that, by Lemma~\ref{teo:fivepoints}, for any large enough $\gamma$ the angular region $A_{x_H}-x_H+s$ contains at least one of the points $\gamma x_1,\gamma x_2,\dots,\gamma x_5$. Now, consider the smallest $\lambda>0$ such that the homothet $C_\lambda=\lambda(C-x_H)+s$ contains at least two points of $S'$ (it exists, since $C_\lambda$ will contain $s$ and at least one of $\gamma x_1,\gamma x_2,\dots,\gamma x_5$ if $\lambda$ is very large). If necessary, perturb $C'$ slightly so that it contains $s$ and exactly one other element of $S'$, then this element lies in $H$ and is adjacent to $s$, as desired. This implies that, for large enough $\gamma$, the neighbours of $s$ are not contained in a closed halfplane determined by a line through $s$, which is only possible if $s$ has degree at least $3$ in $D_C(S')$. Any large enough $\gamma$ will ensure that this holds simultaneously for every $s\in S$.
\end{proof}

The result clearly holds for $|S|\leq 8$, so we assume that $|S|>8$. Let $X\subset\{\gamma x_1,\gamma x_2,\dots,\gamma x_5\}$ be the set of $\gamma x_i$'s which are adjacent to at least one point of $S$ and delete the rest of the $\gamma x_i$'s from $D_C(S')$. It is not hard to see that $|X|\geq 2$. If $|X|=2$, join these two points of by an edge (skip this step if they are already adjacent) and add a vertex $v$ in the outer face of $D_C(S')$, then connect $v$ to both element of $X$ and to some point in $S$ while keeping the graph planar. Otherwise, if $|X|\geq 2$, we can add edges between the elements of $X$ so that there is a cycle of length $|X|$ going through all of them and the graph remains planar. In any case, the resulting graph is simple, planar, connected, and it has at least $|S|+3>10$ vertices, all of degree at least three. Nishizeki and Baybars \cite{planarmatchings} showed that any graph with these properties contains a matching of size at least $\lceil\frac{1}{3}(n+2)\rceil$, where $n$ is the total number of vertices. Let $t\leq 5$ denote the number of vertices that do not belong to $S$. Deleting all vertices not in $S$ from the graph, we get a matching in $D_C(S)$ of size at least $\lceil\frac{1}{3}(|S|+2+t)\rceil-t=\lceil\frac{1}{3}(|S|-8)\rceil$. This matching translates into a way of covering $S$ using no more than $|S|-\lceil\frac{1}{3}(|S|-8)\rceil$ $2^+/S$-homothets of $C$.
\end{proof}

\section{Further research and concluding remarks}\label{chap:outro}

\subsection*{A drawback of the lower and upper densities}

Unlike the standard upper and lower densities of an arrangement, the measure theoretic versions introduced in Section \ref{sec:packcover} are in general not independent of the choice of the origin. The reason for this is that, for any two points $O_1$ and $O_2$, the measures of the balls $B(O_1,r)$ and $B(O_2,r)$ may differ in an arbitrarily large multiplicative constant for every $r$. Although this can be avoided by adding the requirement that $\mu(X)\leq c\cdot \text{vol}(X)$ for any compact $X$ and some constant $c$, this defect begs the question: Is there a better way of extending the standard definitions to arbitrary Borel measures?

\subsection*{Bounds in the other direction}

The hidden constants $c_{f,d}$ and $ c_{g,d}$ obtained in the proofs of theorems \ref{teo:fbound} and \ref{teo:gbound} increase and decrease exponentially in $d$, respectively. We showed in Sections \ref{sec:coverdensity} and \ref{sec:packdensity} that, under the right conditions, $\Theta_H(\mu,C)\leq c_{f,d}$ and $\delta_H(\mu,C)\geq c_{g,d}$ (in the case of measures). This yields, in particular, that $c_{f,d}\geq\Theta_H(C)$ and $c_{g,d}\leq\delta_H(C)$ for any $C$ (we remark that this can also be obtained by considering the restriction of the Lebesgue measure to large boxes). Both of these bounds also hold for the hidden constants in the case of point sets, as can be shown by taking a sufficiently large section of a grid.

\begin{theorem}\label{teo:grid}
Let $C\subset\mathbb{R}^d$ be a convex body and $\epsilon$ any positive real number. Then, for any sufficiently large $k$, there is an integer $N(C,\epsilon,k)$ such that for each $N$ with $N>N(C,\epsilon,k)$ the set $[N]^d=\lbrace(x_{1},x_{2}, ...,x_{d})\in\mathbb{R}^d\text{ }|\text{ }x_{i}\in [N]\rbrace$\footnote{For each positive integer $n$, $[n]$ denotes the set $\{1,2,\dots,n\}$.} of integer points inside a $d$-hypercube of side $N$ satisfies $f(C,k,[N]^d)>(\Theta_H(C)-\epsilon)\frac{N^{d}}{k}$ and $g(C,k,[N]^d)<(\delta_H(C)+\epsilon)\frac{N^{d}}{k}$.
\end{theorem}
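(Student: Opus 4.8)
The plan is to relate $f(C,k,[N]^d)$ and $g(C,k,[N]^d)$ to the standard covering and packing densities $\Theta_H(C)$ and $\delta_H(C)$ by exploiting the self-similarity of the integer grid together with the fact that, asymptotically, covering a large grid with small-capacity homothets is essentially the same as covering a large cube with translates of a fixed homothet. Fix $\epsilon>0$ and choose $k$ large enough that a homothet of $C$ containing roughly $k$ grid points is large compared to the grid spacing; more precisely, since $C$ has nonempty interior, a homothet $\lambda C + x$ of volume $v$ contains $(1+o(1))v$ points of $\mathbb{Z}^d$ as $\lambda\to\infty$ (the boundary contributes a lower-order term, which is where one would invoke that $[N]^d$ is in a suitably generic position, or simply that the boundary of a convex body has measure zero and one may wiggle $C$). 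So a $k^-/[N]^d$-homothet has volume at most $(1+\epsilon')k$ for the appropriate normalization, and conversely.

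First I would prove the covering bound. Suppose $[N]^d$ is covered by $m = f(C,k,[N]^d)$ homothets of $C$, each containing at most $k$ grid points, hence each of volume at most $(1+\epsilon')k \cdot \nu$ where $\nu$ is the volume-per-lattice-point normalization. Scale the whole picture down by a factor $N$: the homothets become homothets of $C$ covering (a dense net inside) the unit cube $[0,1]^d$, each of volume at most $(1+\epsilon')k\nu/N^d$. Now here is the key point: because each homothet is small (its diameter is $O((k/N^d)^{1/d}) \to 0$ relative to the unit cube once $N$ is large), such a covering of the unit cube, after discarding the $o(m)$ homothets meeting the boundary and rescaling each to a common size, behaves like a covering of all of $\mathbb{R}^d$ by translates of a fixed small homothet of $C$ in the asymptotic-density sense. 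By the definition of $\Theta_H(C)$ as an infimum of lower densities, any such covering must have density at least $\Theta_H(C)$, so the total volume of the homothets is at least $(\Theta_H(C)-\epsilon/2)\cdot 1$, i.e. $m \cdot (1+\epsilon')k\nu/N^d \geq \Theta_H(C)-\epsilon/2$. Rearranging and absorbing the $\epsilon'$ into $\epsilon$ gives $f(C,k,[N]^d) = m > (\Theta_H(C)-\epsilon)\tfrac{N^d}{k}$ for all $N$ larger than some $N(C,\epsilon,k)$. The packing bound is entirely symmetric: an interior-disjoint family of $m = g(C,k,[N]^d)$ homothets each containing at least $k$ grid points, hence each of volume at least $(1-\epsilon')k\nu$, rescaled to the unit cube, gives a packing by small near-congruent homothets; by the definition of $\delta_H(C)$ as a supremum of upper densities, the total volume is at most $(\delta_H(C)+\epsilon/2)$, which yields $g(C,k,[N]^d) < (\delta_H(C)+\epsilon)\tfrac{N^d}{k}$.

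The main obstacle, and the step that needs the most care, is making precise the passage from "a covering/packing of the unit cube by many tiny homothets" to "the covering/packing density of $\mathbb{R}^d$". There are two subtleties: the homothets are not all congruent (they have slightly varying coefficients, though all within a bounded ratio once $k$ is fixed and $N$ is large), and the arrangement lives in a bounded region, not all of $\mathbb{R}^d$. Both are handled by a compactness/averaging argument: one tiles $\mathbb{R}^d$ with translated copies of the (rescaled) unit cube and repeats the arrangement periodically, discarding homothets straddling cube boundaries — these form a vanishing fraction as the tile becomes large relative to the (fixed, tiny) homothets — and then uniformly shrinks or enlarges the homothets to a common size, which changes total volume by a factor $1\pm\epsilon'$; the resulting periodic arrangement is a genuine covering (resp. packing) of $\mathbb{R}^d$ whose lower (resp. upper) density is within $\epsilon/2$ of the empirical density of the original finite arrangement, and to which the definitions of $\Theta_H(C)$ and $\delta_H(C)$ apply directly. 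I would also note that for the covering direction one must arrange that shrinking the homothets still leaves a covering — this is why one first enlarges slightly before tiling, or equivalently works with closed homothets and a compactness argument à la Lemma~\ref{teo:neighborhoodcover}. Once this bridge is in place, the arithmetic relating volumes to lattice-point counts is routine, using only that $\mathrm{Vol}$ is the limit of normalized lattice-point counts under dilation and that the boundary of $C$ is null.
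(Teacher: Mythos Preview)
Your proposal is correct and follows essentially the same route as the paper's (sketched) proof: both arguments convert a cover of $[N]^d$ by $k^-$-homothets into a cover of the continuous cube $[1,N]^d$ by translates of a single fixed homothet $C'$ of volume close to $k$, and then invoke the definition of $\Theta_H(C)$ (and dually for $g$ and $\delta_H(C)$). The paper's version is a bit more streamlined than yours: rather than scaling down, tiling periodically, and then renormalising sizes, it directly replaces each $k^-$-homothet $C_1$ by a translate of a fixed $C'$ (of volume $<(1+\delta)k$) that contains $C_1$ with an extra $\sqrt{d}$ cushion---this cushion is precisely what turns a cover of the grid points into a cover of the continuous cube, and it sidesteps your worry about ``shrinking the homothets still leaves a covering'' as well as your (not quite accurate) claim that the homothets have sizes within a bounded ratio, since one only ever enlarges up to the fixed $C'$.
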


Since the proof is quite straightforward, we give only a sketch of the bound for $f$.

\begin{proof}
Let $\delta>0$. For any sufficiently large $k$, there is an homothet $C'$ of $C$ of volume less than $(1+\delta)k$ which has the following property: Every homothet $C_{1}$ of $C$ that covers at most $k$ points of the lattice $\mathbb{Z}^{d}$ is contained in a translate $C_{2}$ of $C'$ such that every point covered by $C_{1}$ has distance at least $\sqrt{d}$ from the boundary of $C_{2}$. Also, for any sufficiently large $N$, the set $[1,N]^d=\lbrace(x_{1},x_{2}, ...,x_{d})\text{ }|\text{ }1\leq x_{i}\leq N\rbrace$ cannot be covered by less than $(\Theta_H(C)-\delta)\frac{N^{d}}{(1+\delta)k}$ translates of $C'$. 

Now, consider a cover of $[N]^d$ by $k^-/[N]^d$-homothets of $C$ and for each of these homothets take a translate of $C'$ with the described properties. This way, we get a cover of $[1,N]^d$ with translates of $C'$, and the result follows by taking a small enough $\delta$. This is not entirely correct, since the $k^d/[N]^d$-homothets which are not completely contained in $[1,N]^d$ may not fit inside a translate of $C'$ in the desired way, but these become insignificant if we choose $N(C,\epsilon,k)$ to be large enough.
\end{proof}

While this shows that the exponential growth of $c_{f,d}$ and exponential decay of $ c_{g,d}$ are necessary, we believe that these bounds are still far from optimal. It might be an interesting problem to try and find point sets or measures for which $f$ is large (or $g$ is small) with respect to $\frac{|S|}{k}$ (or $\frac{\mu\mathbb{R}^d}{\mu(C)}$). 

\begin{prob}
What are the optimal values of $c_{f,d}$ and $c_{g,d}$? 
\end{prob}

Given that determination of packing and covering densities tends to be a very difficult problem, one should expect an exact solution to the problem above to be out of reach (for now). Similar questions can be asked for the results in Section \ref{sec:matchings}.

\begin{prob}
Can theorems \ref{teo:matching} and \ref{teo:matching2} be improved?
\end{prob}

\subsection*{Higher order Voronoi diagrams}

In their point set versions, theorems \ref{teo:fbound} and \ref{teo:gbound} can be interpreted as a kind of structural property of the order-$k$ Voronoi diagram of $S$ with respect to the (not necessarily symmetric) distance function induced by $C$. The cells in this diagram encode the $k$-element subsets of $S$ that can be covered by an homothet of $C$ which contains exactly $k$ points of $S$. See \cite{aurenhammervoronoi} for more on Voronoi diagrams.

\subsection*{Beyond convex bodies}

While the assumptions that $C$ is bounded and has nonempty interior can both easily be seen to be essential to the results obtained in sections \ref{chap:cover} and \ref{chap:pack}, the convexity hypothesis can be somewhat relaxed:

The \textit{kernel} of a compact connected set $C\subset\mathbb{R}^d$, denoted by $\text{ker}(C)$, is the set of points $p\in C$ such that for every other $q\in C$ the segment with endpoints $p$ and $q$ is completely contained in $C$. We say that $C$ is $\textit{star-shaped}$ if $\text{ker}(C)\neq\emptyset$. Our results in sections \ref{chap:cover} and \ref{chap:pack} remain true as long as $C$ is star-shaped and there is an affine transformation $T$ such that $B^d\subset ker(C)\subset C\subset\alpha B^d$ for some $\alpha=\alpha(d)$ that depends only on $d$.

A sufficiently large grid (as in Theorem~\ref{teo:grid}) or the restriction of the Lebesgue measure to a large box show that we can not hope to extend theorems ~\ref{teo:fbound} and ~\ref{teo:gbound} to non-convex bodies while keeping the hidden constant independent of $C$.

\subsection*{Complexity}
Even though the reduction to $3$-SAT given in Section \ref{sec:complexity} and the proof of NP-hardness in \cite{matchingnp} work only in some very particular cases, we conjecture the following.

\begin{conj}
Let $C$ be a convex body and $k\geq 3$ an integer, then $C$-$k$-COVER is NP-hard. Similarly $C$-$k$-PACK is NP-hard for every $k\geq 2$.
\end{conj}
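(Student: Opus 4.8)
The statement is a conjecture, so what follows is a plan of attack rather than a proof. The idea is to extend the two reductions of Section~\ref{sec:complexity} --- Theorem~\ref{teo:complexity}, which handles covering with squares, and the strong--matching reduction of Bereg et al.~\cite{matchingnp}, which handles packing --- to an arbitrary convex body and to all relevant $k$. A convenient first move is to reduce to the case that $C$ is $1/d$-fat: by Lemma~\ref{teo:fat} there is a non-singular affine map $T$ with $B^d\subseteq T(C)\subseteq dB^d$, and since $T(\lambda C+x)$ is always a homothet of $T(C)$ with coefficient $\lambda$, the map $S\mapsto T(S)$ preserves, for every $m$, exactly which $m$-subsets of points are enclosable by a single homothet. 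Hence $C$-$k$-COVER and $T(C)$-$k$-COVER (and likewise $C$-$k$-PACK and $T(C)$-$k$-PACK) are polynomial-time equivalent, and it suffices to treat bodies of bounded eccentricity.

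Next I would rebuild the gadgets of Theorem~\ref{teo:complexity} for a general fat body $C$ in the plane. That reduction rests on three ingredients: (i) a layout of many disjoint thin ``loops'' of points routed through a grid of polynomial size; (ii) a crossing gadget at which any \emph{capturing} homothet (one enclosing exactly $k$ points) meeting two loops is forced to take a balanced, locally determined bite of each (roughly $k/2$ points per loop); and (iii) clause loops of length $\not\equiv 0\pmod{k}$ of appropriate parity, so that in a cover using the minimum $|S|/k$ homothets --- where necessarily every point is covered exactly once --- some homothet must straddle a clause loop and a variable loop, and the local geometry permits this only at a literal satisfying the clause; a counting argument then equates satisfiability with the existence of a size-$|S|/k$ cover. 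Ingredients (i) and (iii) are essentially combinatorial (it is here that $k\geq 3$ enters, to make residues of the required parity available) and should transfer with little change. The substantive work is (ii): one needs a \emph{capture lemma} asserting that there is a constant-size cluster of points, deployable along any prescribed smooth arc, whose only capturing homothet --- given that every other point of $S$ is far away --- is pinned down up to an arbitrarily small perturbation; together with a \emph{crossing lemma} asserting that two such clusters can be placed so that any homothet capturing points of both takes a balanced bite of each and lies in a bounded neighbourhood of the crossing. Both should follow from compactness plus bounded eccentricity of $C$; the boundary-degeneracy issues that the square proof sidestepped by placing points on a lattice will need genuine care when $C$ is strictly convex.

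For $C$-$k$-PACK with $k\geq 2$ I would run the parallel scheme built on the strong-matching reduction of~\cite{matchingnp}: now the ``good'' objects are interior-disjoint homothets, each enclosing at least $k$ points, a packing of size $|S|/k$ again forces every point to be used once and every good homothet to enclose exactly $k$ points, and the same capture and crossing lemmas apply. The one extra requirement is that the forced homothets at distinct gadgets be pairwise interior-disjoint, which is arranged by spacing the loops far apart relative to the diameter of a capturing homothet --- a bounded quantity, since by fatness a homothet enclosing only $k$ points of a well-separated set has controlled size.

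The step I expect to be the main obstacle is the crossing lemma for a smooth strictly convex $C$. For a square the flat sides make it transparent which four-point configurations a good square can and cannot enclose; for a smooth body one must engineer the clusters so that the family of capturable $k$-subsets is rigid, and then rule out \emph{all} unintended homothets --- those slicing across several clusters, or capturing points in an unbalanced way --- from ever appearing in an optimal cover or packing. This is really a question about the structure of the order-$k$ Voronoi diagram of the constructed point set with respect to $C$ (cf.\ the discussion of higher-order Voronoi diagrams above), and it is precisely the place where the square-specific argument of Theorem~\ref{teo:complexity} fails to generalize.
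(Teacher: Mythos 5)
The statement you are addressing is presented in the paper as a conjecture, and the paper offers no proof of it; indeed the surrounding text explicitly concedes that the reduction of Theorem~\ref{teo:complexity} and the NP-hardness proof of Bereg et al.\ ``work only in some very particular cases.'' Your proposal is, as you say yourself, a plan of attack rather than a proof, and it does not close the conjecture. The affine-invariance step is sound: homothets are carried to homothets by a non-singular affine map, so $C$-$k$-COVER and $T(C)$-$k$-COVER are polynomial-time equivalent (modulo a remark about representing $T(S)$ with bounded-precision coordinates), and restricting to $1/d$-fat bodies is legitimate. But the substantive content --- your ``capture lemma'' and ``crossing lemma'' for a general fat convex body --- is precisely the open part, and you leave both unproven. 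Compactness plus bounded eccentricity does not obviously deliver the rigidity you need: for a smooth strictly convex $C$ the family of capturable $k$-subsets is governed by the order-$k$ Delaunay/Voronoi structure of the constructed set, and nothing in your sketch rules out unintended capturing homothets that slice across several clusters or take unbalanced bites. The paper itself flags exactly this failure mode when it says its argument ``depends heavily on the fact that $C$ is a square and that $S$ is not required to be in general position.''

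A second concrete gap is the arithmetic of the gadgets. The paper's covering reduction works only for $k$ a multiple of $4$: the variable loops have length divisible by $4$, the clause loops have length even but not divisible by $4$, and the whole counting argument rides on the forced $2$--$2$ split at crossings. To cover all $k\geq 3$ you would need crossing and clause gadgets with a forced split for odd $k$ (where a ``balanced bite'' cannot be symmetric) and a replacement for the mod-$4$ bookkeeping; you gesture at this with ``residues of the required parity'' but give no construction, and it is not clear the same loop architecture survives. For $C$-$k$-PACK the situation is thinner still: the only known hardness is $k=2$ for squares, and your extension to all $k\geq 2$ and all $C$ rests on the same two unproven geometric lemmas plus a separation argument for disjointness. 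In short, your proposal is a sensible research program that correctly locates the difficulty where the paper locates it, but it establishes nothing beyond what the paper already proves.
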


\subsection*{Covering with disjoint homothets}
It is natural to ask whether a  result along the lines of Theorem~\ref{teo:fbound} holds if we require that the $k^-/S$-homothets in the cover have disjoint interiors. A sufficiently fine grid (in the case of point sets) and the restriction of Lebesgue measure to a bounded box (in the measure case) show that, in general, this is not the case, indeed, unless  $\theta(C)=1$, the number of interior-disjoint $k^-/S$-homothets required in these cases will not be bounded from above by a function of $\frac{|S|}{k}$ ($\frac{\mu(\mathbb{R}^d)}{\mu(C)}$, respectively). Perhaps the most annoying unanswered questions are the following.

\begin{prob}
Let $S$ be a finite set of at least $k$ points in the plane and $C$ a square. Is the number of disjoint homothets required to cover $S$ bounded from above by a function of $\frac{|S|}{k}$? Is it $O(\frac{|S|}{k})$? What is the answer if we add the restriction that no two points of $S$ lie on the same horizontal or vertical line?
\end{prob}

We believe the answer to all the previous questions to be no. In fact, we suspect that a family of examples which exhibit this can be constructed along the following lines:

Set $k$ to be very large and start by taking a uniformly distributed set of about $k$ points inside the unit square. Choose $m$ points (with $m$ much smaller than $k$) inside the square such that the set of their $2m$ $x$ and $y$ coordinates is independent over $\mathbb{Q}$ and place $k$ points around a very small neighborhood of each of these $m$ points. It is not hard to see that this would work directly (even for $m=1$) if all the squares in the cover were required to lie inside the unit square. This example can be adapted to measures as well.

For $k=2$, this problem is equivalent to the study of strong matchings; see Section \ref{sec:previouswork} for details.

\subsection*{Weak nets for zonotopes}

A centrally symmetric convex polytope is a \textit{zonotope} if all its faces are centrally symmetric\footnote{A zonotope is commonly defined as the set of all points which are linear combinations with coefficients in $[0,1]$ of a finite set of vectors, but the alternative definition given here, which is widely known to be equivalent, serves our purpose much better.}. Notice that each face of a zonotope is a zonotope itself. Examples of zonotopes include hypercubes, parallelepipeds and centrally symmetric convex polygons.

For zonotopes with few vertices, the following geometric lemma can act as a substitute of \ref{teo:hit}, allowing us to construct even smaller weak $\epsilon$-nets. 

\begin{lemma}\label{teo:zonotopevertices}
Let $Z\subset\mathbb{R}^d$ be a zonotope and consider two homothets $Z_1$ and $Z_2$ of $Z$ with non-empty intersection. If $Z_1$ is at least as large as $Z_2$, then it contains at least one vertex of $Z_2$.
\end{lemma}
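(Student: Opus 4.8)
The plan is to reduce the statement to a one-dimensional fact about zonotopes by using a clever choice of linear functional, much like the classical argument that a centrally symmetric body "sees" a translate of itself from one of its extreme points. First I would set up notation: write $Z_2 = z_2 + \lambda_2 Z$ and $Z_1 = z_1 + \lambda_1 Z$ with $\lambda_1 \geq \lambda_2 > 0$, and fix a point $p \in Z_1 \cap Z_2$. Since $Z$ is a zonotope, it is the Minkowski sum of segments, say $Z = \sum_{i=1}^{m} [-v_i, v_i]$ (after translating so that $Z$ is centrally symmetric about the origin); the vertices of $Z$ are exactly the points $\sum_i \epsilon_i v_i$ with $\epsilon_i \in \{-1,+1\}$ that are \emph{extreme}, and crucially every vertex is obtained by choosing, for some generic linear functional $\ell$, the sign $\epsilon_i = \operatorname{sgn}\langle \ell, v_i\rangle$. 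Equivalently: for any linear functional $\ell$ that is nonzero on each $v_i$, the point where $\ell$ is maximized on $Z$ is a vertex, and similarly for the minimum.

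The key step is to pick the right functional. Consider the functional $\ell$ defined by $\ell(x) = \langle x - p, \, c_1 - p\rangle$ where $c_1$ is the center of $Z_1$ — or, more cleanly, take $\ell$ to be any linear functional whose gradient points from $p$ toward the center of $Z_1$; after a small perturbation we may assume $\ell$ is nonzero on every generating segment $v_i$, so that the maximizer of $\ell$ over $Z_2$ is a genuine vertex $w$ of $Z_2$. I then claim $w \in Z_1$. Write $w = z_2 + \lambda_2 u$ where $u$ is the vertex of $Z$ maximizing $\ell$. Because $\ell$ is maximized over $Z$ at $u$, and $Z_1 = z_1 + \lambda_1 Z$ while $Z_2 = z_2 + \lambda_2 Z$ with $\lambda_1 \ge \lambda_2$, the displacement $w - p = (z_2 - p) + \lambda_2 u$ is, in the $\ell$-direction, no larger than the corresponding "radius" of $Z_1$ from $p$: one shows $w$ lies in the translate of $Z_1$ by checking that $w - z_1 \in \lambda_1 Z$, using the support-function inequality $h_{\lambda_1 Z}(\ell) = \lambda_1 h_Z(\ell) \ge \lambda_2 h_Z(\ell) = h_{\lambda_2 Z}(\ell)$ together with the fact that $p \in Z_1$. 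Concretely, among all points of $Z_2$ the vertex $w$ is the one pushed furthest in the $\ell$-direction away from $p$, and since $p \in Z_1$ and $Z_1$ is a (larger) homothet of the same zonotope, moving from $p$ in the direction that is "safe" for $Z_2$ keeps us inside $Z_1$.

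The cleanest way to make this rigorous is probably to argue directly with support functions of the \emph{difference body}: $w \in Z_1$ iff $\langle \ell, w - z_1\rangle \le \lambda_1 h_Z(\ell)$ for \emph{all} $\ell$, so one has to be careful that the single good direction is not enough — instead I would run the following sharper version. Let $q$ be the center of $Z_1$, and among the (finitely many) vertices of $Z_2$ choose $w$ to be one maximizing $\langle w - q, \, w - q\rangle$... no: better, choose $w$ to be a vertex of $Z_2$ lying in the cone from $q$ through $p$, i.e. maximizing $\langle \ell_0, \cdot\rangle$ with $\ell_0$ the outer normal making $p$ "between" $q$ and $\partial Z_2$ in that direction. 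Then for every supporting functional $\ell$ of $Z_1$ at a point beyond $p$, monotonicity of $h_Z$ under scaling and the choice of $w$ give $\langle \ell, w\rangle \le \langle \ell, \text{(corresponding boundary point of }Z_1)\rangle$, hence $w \in Z_1$. The main obstacle I anticipate is exactly this last point: a single direction controls only one supporting hyperplane of $Z_1$, so I must either (a) show that the vertex $w$ chosen for the "worst" direction automatically satisfies all the other supporting inequalities — which should follow from the zonotope structure, since the sign pattern realizing $w$ as a vertex of $Z_2$ can be matched by a dilation of $Z_2$ centered appropriately inside $Z_1$ — or (b) replace the perturbation argument with a compactness/continuity argument choosing $\ell$ from the normal cone of $Z_1$ at the point of $\partial Z_1$ "opposite" to the center through $p$. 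I expect option (a), phrased via "every vertex of $Z_2$ is the point where some linear functional is maximized, and that same functional, applied to the larger homothet $Z_1$ containing $p$, certifies membership", to be the heart of the proof, with the zonotope hypothesis entering precisely to guarantee that the extreme point in the chosen direction is actually a vertex rather than an interior point of a positive-dimensional face.
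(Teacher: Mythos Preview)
Your proposal has a genuine gap, and you correctly locate it yourself: showing that the $\ell$-maximizing vertex $w$ of $Z_2$ satisfies the single inequality $\langle \ell, w-z_1\rangle\le \lambda_1 h_Z(\ell)$ does \emph{not} place $w$ inside $Z_1$; membership requires the support-function inequality in \emph{every} direction simultaneously. Your option~(a) (``the sign pattern realizing $w$ can be matched by a dilation of $Z_2$ centered appropriately inside $Z_1$'') is not an argument as written, and option~(b) has the same defect --- picking $\ell$ from a normal cone of $Z_1$ still certifies only one supporting halfspace. A warning sign that the approach is missing something is that your use of the zonotope hypothesis is only ``generic maximizers are vertices'', which holds for any polytope; the zonotope structure is doing no real work in your sketch.

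The paper's proof is entirely different and sidesteps the all-directions problem by induction on $d$. One translates $Z_2$ along the line of centers until $Z_1$ and the translate $Z_2'$ meet only at their boundaries; central symmetry forces the contact to occur on a pair of parallel facets $f_1\subset\partial Z_1$, $f_2\subset\partial Z_2'$ that are themselves homothetic $(d-1)$-dimensional zonotopes with nonempty intersection. The inductive hypothesis then yields a vertex $v$ of $f_2$ inside $f_1$, and a short planar convexity argument (looking at the triangle $p_1,v,w'$ in the plane through the centers and $v$) shows the corresponding vertex $w$ of the original $Z_2$ lies in $Z_1$. Here the zonotope hypothesis is essential: it guarantees that every facet is again a zonotope, which is exactly what the induction needs.
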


\begin{proof}
We proceed by induction on $d$. The result is trivial for $d=1$ (here, $Z\subset\mathbb{R}$ is simply an interval). Let $p_{1}$ and $p_{2}$ be the centers of $Z_{1}$ and $Z_{2}$, respectively, and $Z_{2}'$ be the result of translating $Z_{2}$ along the direction of $\overrightarrow{p_{1}p_{2}}$ so that $Z_{1}$ and $Z_{2}'$ intersect only at their boundaries; $p_{2}'$ will denote the center of $Z_{2}'$ (see \ref{fig:10} a). Now, let $t_{1}$ and $t_{2}$ be the intersection points of the segment $p_{1}p_{2}'$ with the boundaries of $Z_{1}$ and $Z_{2}'$, respectively. Consider a facet $f_{1}$ of $Z_{1}$ which contains $t_{1}$, since $Z$ is centrally symmetric, there is a negative homothety from $Z_{1}$ to $Z_{2}'$, and this homothety maps $f_{1}$ into a facet $f_{2}$ of $Z_{2}'$ which contains $t_{2}$ and is parallel to $f_{1}$. Let $h_{1}$ and $h_{2}$ be the parallel hyperplanes that support $f_{1}$ and $f_{2}$, respectively, then $Z_{1}$ is contained in the halfspace determined by $h_{1}$ that contains $p_{1}$, while $Z_{2}$ is contained in the halfspace determined by $h_{2}$ that contains $p_{2}$. Suppose that $t_{1}\neq t_{2}$, then $p_{1}, t_{1}, t_{2}, p_{2}$ must lie on the segment $p_{1}p_{2}$ in that order and, by our previous observation, $Z_{1}$ and $Z_{2}$ would not intersect (see \ref{fig:10} 2b), it follows that $t_{1}=t_{2}$ and, thus, $f_{1}\cap f_{2}\neq\emptyset$. Now, since $f_{1}$ and $f_{2}$ are homothetic $d-1$ dimensional zonotopes and $f_{2}$ is not larger than $f_{1}$, the induction hypothesis implies the existence of a vertex $v$ of $f_{2}$ contained in $f_{1}$.

Let $w$ be the vertex of $Z_{2}$ which is mapped to $v$ by the translation from $Z_{2}$ to $Z_{2}'$, we claim that $w$ is contained in $Z_{1}$. The positive homothety from $Z_{2}$ to $Z_{1}$ maps $w$ to a vertex $w'$ of $Z_{1}$. The points $p_{1}, p_{2}, v, w$ and $w'$ all lie on the same plane and, since $Z_{2}$ is not larger than $Z_{1}$, $w'$ is contained in the closed region determined by the lines $wp_{1}$ and $wv$ which is opposite to $p_{2}$. This way, $w$ belongs to the convex hull of the points $p_{1}$, $v$ and $w'$; since these three points belong to the convex set $Z_{1}$, so does $w$ (see \ref{fig:10} c). This concludes the proof.
\end{proof}

\begin{figure}[!htbp]
\centering
\includegraphics[scale=0.62]{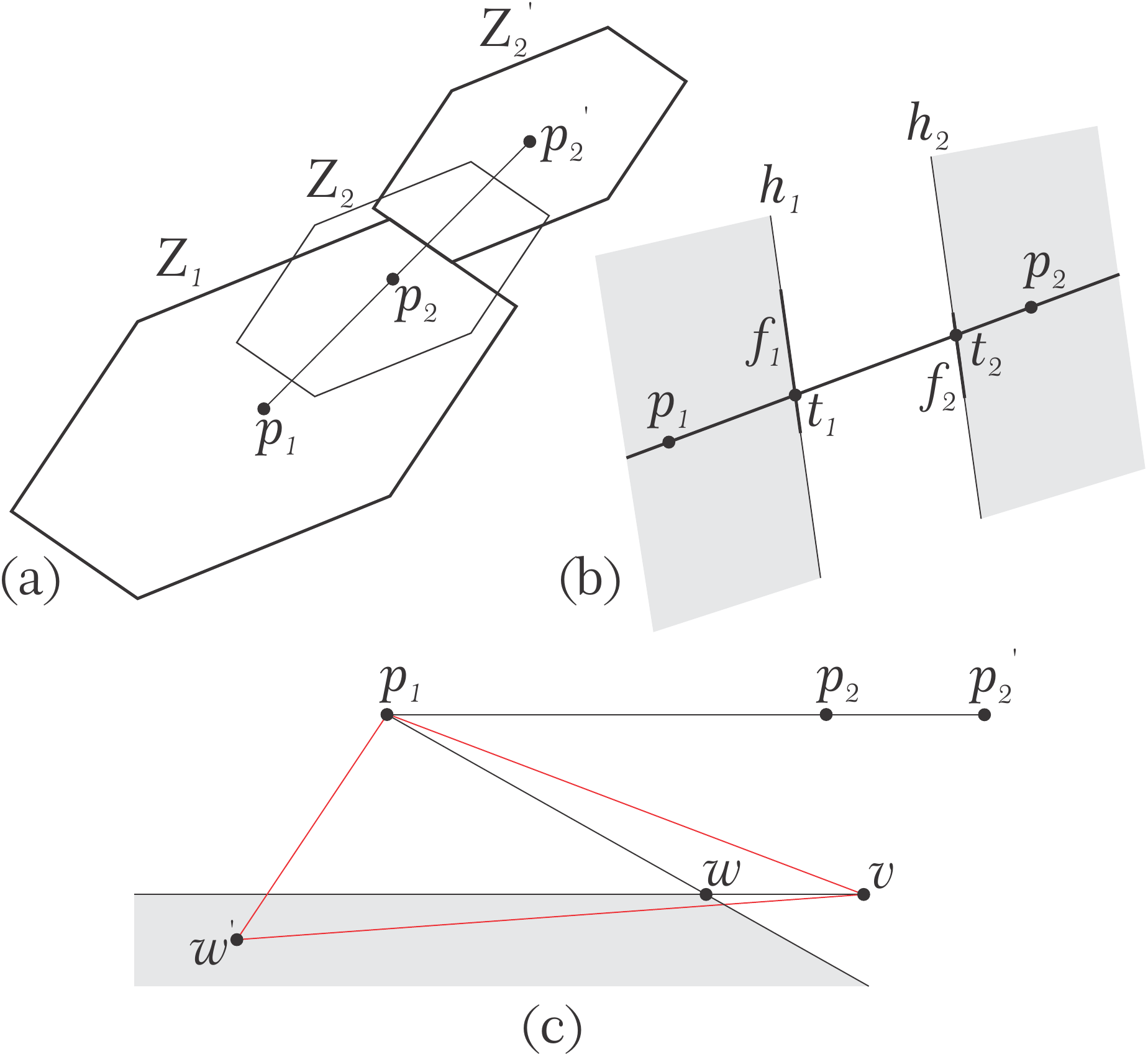}
\caption{(a): $Z_{1}$, $Z_{2}$ and $Z_{2}'$ (b): How the configuration would look if $t_{1}\neq t_{2}$ (c): Region where $w'$ lies highlighted in grey and triangle $w'vp_{1}$ in red.}
\label{fig:10}
\end{figure}

Proceeding as in the proof of Theorem \ref{teo:nets}, we get the following corollary, which generalizes a result for hypercubes by Kulkarni and Govindarajan \cite{weaknets}.

\begin{corollary}\label{teo:zonotopenets}
Let $Z\subset\mathbb{R}^d$ be a zonotope with $v$ vertices and denote by $\mathcal{H}_{Z}$ the family of all homothets of $C$. Then, for any finite set $S\subset\mathbb{R}^d$ and any $\epsilon>0$, $(S, \mathcal{H}_{Z}|_S)$ admits a weak $\epsilon$-net of size $\frac{v}{\epsilon}$.
\end{corollary}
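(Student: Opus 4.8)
The plan is to mimic the proof of Theorem~\ref{teo:nets} verbatim, simply replacing the hitting set $P_{C'}$ furnished by Lemma~\ref{teo:hit} with the vertex set of the relevant homothet. This substitution is legitimate precisely because Lemma~\ref{teo:zonotopevertices} says that the (at most $v$) vertices of a homothet $Z'$ of $Z$ hit every homothet of $Z$ that meets $Z'$ and is at least as large as $Z'$ — which is exactly the role that $P_{C'}$ played (it hit every homothet intersecting $C'$ with coefficient at least $1$).

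Concretely, I would build $W$ greedily. Initialize $W=\emptyset$ and $S_0=S$. At step $i$, as long as $|S_{i-1}|\ge\epsilon|S|$, let $Z_i$ be a smallest homothet of $Z$ with $|Z_i\cap S_{i-1}|\ge\epsilon|S|$; such a homothet exists because $Z$ is compact and every sufficiently large homothet of $Z$ covers all of $S_{i-1}$. Add the vertices of $Z_i$ to $W$ and put $S_i=S_{i-1}\setminus Z_i$. Each step deletes at least $\epsilon|S|$ points, so the procedure halts after $t\le 1/\epsilon$ steps, and therefore $|W|\le vt\le v/\epsilon$.

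It then remains to verify that $W$ is a weak $\epsilon$-net. Fix a homothet $Z'$ of $Z$ with $|Z'\cap S|\ge\epsilon|S|$. Since the procedure ends with fewer than $\epsilon|S|$ points left, at least one point of $Z'\cap S$ is eventually discarded; let $Z_i$ be the homothet chosen at the first step at which a point of $Z'\cap S$ is removed. Then $Z_i\cap Z'\neq\emptyset$. Moreover, at the beginning of step $i$ no point of $Z'\cap S$ has yet been discarded, so $Z'$ is a homothet with $|Z'\cap S_{i-1}|\ge\epsilon|S|$; by the minimality in the choice of $Z_i$, the homothet $Z'$ is at least as large as $Z_i$. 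Lemma~\ref{teo:zonotopevertices} now produces a vertex of $Z_i$ lying inside $Z'$, and that vertex belongs to $W$. Hence $W\cap Z'\neq\emptyset$, as desired.

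I do not anticipate a genuine obstacle here: all the geometric content is already packaged in Lemma~\ref{teo:zonotopevertices}, and what is left is the same bookkeeping as in Theorem~\ref{teo:nets}. The only point that requires a moment's care is ensuring that the ``at least as large as'' hypothesis of Lemma~\ref{teo:zonotopevertices} is actually available at the moment it is invoked — this is exactly why one must select a \emph{smallest} qualifying homothet at every step and keep track of which points have already been discarded, so that $Z'$ is seen to be a competitor at that step and hence no smaller than $Z_i$.
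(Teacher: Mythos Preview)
Your proposal is correct and is precisely the argument the paper has in mind: the paper simply says ``Proceeding as in the proof of Theorem~\ref{teo:nets},'' which is exactly your greedy construction with the vertex set of $Z_i$ (via Lemma~\ref{teo:zonotopevertices}) replacing the hitting set $P_{C'}$ from Lemma~\ref{teo:hit}. The minimality of $Z_i$ at each step is, as you note, exactly what guarantees the size hypothesis of Lemma~\ref{teo:zonotopevertices} when you later compare $Z'$ to $Z_i$.
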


We also have the following variant of Lemma \ref{teo:neighborhoodcover}.

\begin{lemma}\label{teo:zonotopeneighborhood}
Let $Z\subset\mathbb{R}^d$ be a zonotope and denote by $I$ the number of pairs $(f,v)$ where $f$ is a facet of $Z$ and $v$ is a vertex of $f$. Let $P\subset\mathbb{R}^d$ be a finite set and consider a collection of homothets $\lbrace Z_p\rbrace_{p\in P}$ of $Z$ such that $Z_p$ is of the form $p+\lambda Z$ and $\bigcap_{p\in P} Z_p\neq\emptyset$. Then there is a subset $P'$ of $P$ of size at most $I$ such that $\lbrace Z_p\rbrace_{p\in P'}$ covers $P$.
\end{lemma}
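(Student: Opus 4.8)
The plan is to mimic the proof of Lemma~\ref{teo:neighborhoodcover}, but replace the spherical-cap/Tammes counting argument (which produces a large, $d$-dependent constant) by the sharper combinatorial fact from Lemma~\ref{teo:zonotopevertices}: a larger homothet of $Z$ meeting a smaller one always swallows a vertex of the smaller one. After translating, assume $O\in\bigcap_{p\in P}Z_p$, so every $Z_p$ is of the form $p+\lambda_p Z$ with $O\in Z_p$; note that ``$Z_q$ is at least as large as $Z_p$'' is equivalent to $\lambda_q\ge\lambda_p$. The key reduction is: if $Z_q$ is at least as large as $Z_p$ and $q\in Z_p$, then — because $O\in Z_q\cap Z_p$ so the two homothets intersect — Lemma~\ref{teo:zonotopevertices} gives a vertex of $Z_p$ lying in $Z_q$; but more importantly we want the reverse containment statement, so instead I would argue about which homothets are needed to cover a given point.

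Concretely, I would build $P'$ greedily in decreasing order of size: repeatedly pick an uncovered point $p\in P$ whose homothet $Z_p$ is \emph{largest} among the homothets of currently-uncovered points, and add $p$ to $P'$. I claim this terminates after at most $I$ steps. Suppose $p_1,p_2,\dots$ are added in this order, with $\lambda_{p_1}\ge\lambda_{p_2}\ge\cdots$. Each $Z_{p_i}$ is a homothet of $Z$ containing $O$, so $O$ lies in some facet-cone of $Z_{p_i}$: precisely, writing $Z_{p_i}=p_i+\lambda_{p_i}Z$, the point $O$ determines (by looking from the center $p_i$ toward $O$) a facet $f$ of $Z$ and, pushing further, a vertex $v$ of that facet $f$ such that $O$ lies in the cone spanned at $p_i$ by the edges of $Z_{p_i}$ at the corresponding vertex. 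This assigns to each $p_i$ a pair $(f_i,v_i)$ of ``facet of $Z$, vertex of that facet''; there are exactly $I$ such pairs. The heart of the argument is to show that the map $p_i\mapsto (f_i,v_i)$ is injective on $P'$: if $p_i,p_j$ ($i<j$) received the same pair $(f,v)$, then the vertex of $Z_{p_i}$ corresponding to $(f,v)$, call it $w_i$, together with $p_i$, $O$, and the analogous vertex $w_j$ of $Z_{p_j}$, all lie in a common $2$-plane (the plane through $p_i$, $p_j$ — which are collinear with $O$ — and the common direction of the $(f,v)$-corner), and a planar picture essentially identical to the one in the proof of Lemma~\ref{teo:zonotopevertices} shows that the larger homothet $Z_{p_i}$ contains the point $p_j$; hence $p_j$ would already have been covered when $p_i$ was chosen, contradicting that $p_j\in P'$. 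Therefore $|P'|\le I$.

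Finally I would check that $\{Z_p\}_{p\in P'}$ actually covers $P$: the greedy process only stops when no uncovered point remains, so every element of $P$ lies in some $Z_p$ with $p\in P'$, giving the covering and completing the proof.

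I expect the main obstacle to be the injectivity step — pinning down exactly how $O$ (equivalently, the center-to-$O$ direction) selects a facet \emph{and} a vertex of that facet in a way that is stable under enlarging the homothet, and then reusing the two-dimensional containment argument from Lemma~\ref{teo:zonotopevertices} cleanly in this new setting (in particular handling degenerate cases where $O$ lies on the boundary between several facet-cones, which can be avoided by a limiting/perturbation argument or by breaking ties consistently). Once that correspondence is set up, the counting and the covering claim are immediate.
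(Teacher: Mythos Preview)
Your high-level strategy --- assign to every point a pair $(f,v)$ recording a facet of $Z$ and a vertex of that facet, and argue that one representative per pair suffices --- is exactly the paper's approach. The paper's execution is more direct, though: it skips the greedy layer entirely. After translating so that $O\in\bigcap Z_p$ and $O$ is the center of $Z$, it decomposes each facet $f$ into regions $f_v=f\cap\bigl(v+\tfrac{1}{2}(Z-v)\bigr)$, one per vertex $v$ of $f$, and sets $P_{f,v}=\{p\in P:\text{the ray }\overrightarrow{Op}\text{ meets }f_v\}$. From each nonempty $P_{f,v}$ it then picks the point of \emph{maximal $Z$-norm} (not maximal $\lambda$) and observes that its homothet, containing $O$, already covers all of $P_{f,v}$. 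No induction, no greedy order, and Lemma~\ref{teo:zonotopevertices} is not invoked at all.

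Your sketch of the injectivity step has a genuine error: the parenthetical ``$p_i,p_j$ --- which are collinear with $O$'' is false. Sharing the same $(f,v)$-pair only means the rays $\overrightarrow{Op_i}$ and $\overrightarrow{Op_j}$ hit the same region $f_v$; this is a full-dimensional cone, not a line. Consequently there is no reason for $p_i,p_j,O$ and the corner vertex to be coplanar, and the ``planar picture essentially identical to Lemma~\ref{teo:zonotopevertices}'' does not apply as stated. Your description of how $O$ selects the pair $(f,v)$ --- via ``the cone spanned at $p_i$ by the edges of $Z_{p_i}$ at the corresponding vertex'' --- is also not quite right; the clean assignment is the facet decomposition $f_v$ above, read off from the ray $\overrightarrow{Op}$ on the unit boundary $\partial Z$.

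The containment fact you want \emph{is} true, and it is what the paper uses: for $p,q$ in the same cone over $f_v$ one has $\|p-q\|_Z\le\max(\|p\|_Z,\|q\|_Z)$, and since $O\in Z_p$ forces $\|p\|_Z\le\lambda_p$, this yields $q\in Z_p$ whenever $\|p\|_Z\ge\|q\|_Z$ (and hence also under your hypothesis $\lambda_p\ge\lambda_q$). Once you prove this metric inequality directly, the greedy wrapper is unnecessary --- just take one extremal point per sector.
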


\begin{proof}
    Assume that and $O\in\bigcap_{p\in P} Z_p$ and that $O$ is the center of $Z$. Let $(f,v)$ be a pair as in the statement of the lemma and consider the homothet $Z'$ that results from applying a dilation to $Z$ with center $v$ and ratio $\frac{1}{2}$, the intersection of $f$ with this homothet will be denoted by $f_v$. Repeating this for every pair $(f,v)$, we obtain a decomposition of the facets of $Z$ into $I$ interior disjoint regions. 

    Now, for every pair $(f,v)$, let $P_{f,v}$ consist of all the points $p\in P$ with the property that the ray $\overrightarrow{Op}$ has non-empty intersection with $f_v$. Note that each element of $P$ belongs to at least one the aforementioned sets. From every $P_{f,v}$, choose an element which is maximal with respect to the norm with unit ball $Z$ and add it to $P'$; it is not hard to see that any homothet of $Z$ that is centered at this point and contains $O$ must cover every point in $P_{f,v}$. This way, $P'\leq I$ and $\lbrace Z_p\rbrace_{p\in P'}$ covers the union of all sets of the form $P_{f,v}$, which is $P$.
\end{proof}

Plugging the bounds given by Corollary \ref{teo:zonotopenets} and Lemma \ref{teo:zonotopeneighborhood} into the proof of Theorem \ref{teo:fbound} we obtain the following:  If $Z\subset\mathbb{R}^d$ is a zonotope with $V$ vertices and $I$ is as in the statement of lemma \ref{teo:zonotopeneighborhood} then, for any positive integer $k$ and any non-$\frac{k}{2}/C$-degenerate finite set of points $S\subset\mathbb{R}^d$, $f(Z,k,S)=\frac{2VI|S|}{k}$.

\section*{Acknowledgements}
I am grateful to Jorge Urrutia for many helpful discussions and, particularly, for suggesting that the fatness of the convex body might play an important role in the proofs of theorems \ref{teo:fbound} and \ref{teo:gbound}.

\bibliographystyle{plain}
\bibliography{refs}

\begin{thebibliography}{10}

\bibitem{matchingsquares}
Bernardo~M. {\'A}brego, Esther~M. Arkin, Silvia Fern{\'a}ndez-Merchant, Ferran
  Hurtado, Mikio Kano, Joseph S.~B. Mitchell, and Jorge Urrutia.
\newblock Matching points with circles and squares.
\newblock In Jin Akiyama, Mikio Kano, and Xuehou Tan, editors, {\em Discrete
  and Computational Geometry}, pages 1--15, Berlin, Heidelberg, 2005. Springer
  Berlin Heidelberg.

\bibitem{fattening}
Sunil Arya, Guilherme~D. da~Fonseca, and David~M. Mount.
\newblock {Approximate Convex Intersection Detection with Applications to Width
  and Minkowski Sums}.
\newblock In {\em {ESA 2018 - European Symposium on Algorithms}}, 26th Annual
  European Symposium on Algorithms proceedings, Helsinki, Finland, August 2018.

\bibitem{aurenhammervoronoi}
Franz Aurenhammer and Rolf Klein.
\newblock Voronoi diagrams.
\newblock {\em Handbook of computational geometry}, 5(10):201--290, 2000.

\bibitem{trianglematchings}
Jasine Babu, Ahmad Biniaz, Anil Maheshwari, and Michiel Smid.
\newblock Fixed-orientation equilateral triangle matching of point sets.
\newblock {\em Theoretical Computer Science}, 555:55--70, 2014.

\bibitem{matchingnp}
Sergey Bereg, Nikolaus Mutsanas, and Alexander Wolff.
\newblock Matching points with rectangles and squares.
\newblock {\em Computational Geometry}, 42(2):93--108, 2009.

\bibitem{tutte-berge}
Claude Berge.
\newblock Sur le couplage maximum d’un graphe.
\newblock {\em Comptes rendus hebdomadaires des séances de l'Académie des
  sciences}, 247:258--259, 1958.

\bibitem{besicovitch_1945}
A.~S. Besicovitch.
\newblock A general form of the covering principle and relative differentiation
  of additive functions.
\newblock {\em Mathematical Proceedings of the Cambridge Philosophical
  Society}, 41(2):103–110, 1945.

\bibitem{simpletough}
Ahmad Biniaz.
\newblock A short proof of the toughness of {D}elaunay triangulations.
\newblock In Martin Farach{-}Colton and Inge~Li G{\o}rtz, editors, {\em 3rd
  Symposium on Simplicity in Algorithms, {SOSA} 2020, Salt Lake City, UT, USA,
  January 6-7, 2020}, pages 43--46. {SIAM}, 2020.

\bibitem{gabrielmatching}
Ahmad Biniaz, Anil Maheshwari, and Michiel Smid.
\newblock Matchings in higher-order {G}abriel graphs.
\newblock {\em Theoretical Computer Science}, 596:67--78, 2015.

\bibitem{strongmatchings}
Ahmad Biniaz, Anil Maheshwari, and Michiel Smid.
\newblock Strong matching of points with geometric shapes.
\newblock {\em Computational Geometry}, 68:186--205, 2018.

\bibitem{BliedtnerLoeb}
J.~Bliedtner and P.~Loeb.
\newblock {A reduction technique for limit theorems in analysis and probability
  theory}.
\newblock {\em Arkiv för Matematik}, 30(1-2):25 -- 43, 1992.

\bibitem{researchproblems}
Peter Brass, William O.~J. Moser, and J{\'{a}}nos Pach.
\newblock {\em Research problems in discrete geometry}.
\newblock Springer, 2005.

\bibitem{generalizeddelaunay}
Mar{\'\i}a del~Pilar Cano~Vila.
\newblock {\em Generalized {D}elaunay triangulations: graph-theoretic
  properties and algorithms}.
\newblock PhD thesis, Carleton University, 2020.

\bibitem{delaunaytoughness}
Michael~B. Dillencourt.
\newblock Toughness and {D}elaunay triangulations.
\newblock {\em Discrete \& Computational Geometry}, 5:575--601, 1990.

\bibitem{pq-problem}
J{\"u}rgen Eckhoff.
\newblock A survey of the {H}adwiger-{D}ebrunner (p, q)-problem.
\newblock In {\em Discrete and {C}omputational {G}eometry}, pages 347--377.
  Springer, 2003.

\bibitem{erdosrogers}
Paul Erdos and CA~Rogers.
\newblock Covering space with convex bodies.
\newblock {\em Acta Arithmetica}, 7(3):281--285, 1962.

\bibitem{optpackcovernp}
Robert~J. Fowler, Michael~S. Paterson, and Steven~L. Tanimoto.
\newblock Optimal packing and covering in the plane are {NP}-complete.
\newblock {\em Information Processing Letters}, 12(3):133--137, 1981.

\bibitem{besicovitchconstant}
Zolt{\'a}n F{\"u}redi and Peter~A Loeb.
\newblock On the best constant for the besicovitch covering theorem.
\newblock {\em Proceedings of the American Mathematical Society},
  121(4):1063--1073, 1994.

\bibitem{Hellyandrelatives}
L~Danzer~B Grunbaum and V~Klee.
\newblock Helly's theorem and its relatives.
\newblock In {\em Proceedings of Symposia in Pure Mathematics}, volume~7, pages
  101--180, 1963.

\bibitem{geometricapproximation}
Sariel Har-Peled.
\newblock {\em Geometric approximation algorithms}.
\newblock American Mathematical Soc., 2011.

\bibitem{k-enclosing}
Sariel Har-Peled and Soham Mazumdar.
\newblock Fast algorithms for computing the smallest $k$-enclosing circle.
\newblock {\em Algorithmica}, 41(3):147--157, 2005.

\bibitem{VCdimension}
David Haussler and Emo Welzl.
\newblock Nets and simplex range queries.
\newblock {\em Discrete \& Computational Geometry}, 2:127--151, 12 1987.

\bibitem{fatlemma}
Fritz John.
\newblock Extremum problems with inequalities as subsidiary conditions.
\newblock In {\em Traces and emergence of nonlinear programming}, pages
  197--215. Springer, 2014.

\bibitem{weaknets}
Janardhan Kulkarni and Sathish Govindarajan.
\newblock New $\epsilon$-net constructions.
\newblock In {\em Proceedings of the 22nd Annual Canadian Conference on
  Computational Geometry, Winnipeg, Manitoba, Canada}, pages 159--162.
  Citeseer, 2010.

\bibitem{Morse_1947}
Anthony~P. Morse.
\newblock Perfect blankets.
\newblock {\em Transactions of the American Mathematical Society},
  61(3):418--442, 1947.

\bibitem{tammes}
Oleg Musin and A.~Tarasov.
\newblock The {T}ammes problem for $n = 14$.
\newblock {\em Experimental Mathematics}, 24, 10 2014.

\bibitem{shallowcell}
Nabil Mustafa, Kunal Dutta, and Arijit Ghosh.
\newblock A simple proof of optimal epsilon nets.
\newblock {\em Combinatorica}, 38:1--9, 06 2017.

\bibitem{surveynets}
Nabil~H. Mustafa and Kasturi~R. Varadarajan.
\newblock Epsilon-approximations and epsilon-nets.
\newblock {\em CoRR}, abs/1702.03676, 2017.

\bibitem{homothetarrangements}
M{\'a}rton Nasz{\'o}di, J{\'a}nos Pach, and Konrad Swanepoel.
\newblock Arrangements of homothets of a convex body.
\newblock {\em Mathematika}, 63(2):696--710, 2017.

\bibitem{infiniteVC}
Márton Naszódi and Steven Taschuk.
\newblock On the transversal number and {VC}-dimension of families of positive
  homothets of a convex body.
\newblock {\em Discrete Mathematics}, 310, 07 2009.

\bibitem{planarmatchings}
Takao Nishizeki and Ilker Baybars.
\newblock Lower bounds on the cardinality of the maximum matchings of planar
  graphs.
\newblock {\em Discret. Math.}, 28:255--267, 1979.

\bibitem{boundsfornets}
J{\'a}nos Pach and G{\'a}bor Tardos.
\newblock Tight lower bounds for the size of epsilon-nets.
\newblock {\em Journal of the American Mathematical Society}, 26(3):645--658,
  2013.

\bibitem{trianglematchingsfirst}
Fatemeh Panahi, Ali~Mohades Khorasani, Mansoor Davoodi, and Marzieh Eskandari.
\newblock Weak matching points with triangles.
\newblock CCCG, 2011.

\bibitem{newexistenceproofs}
Evangelia Pyrga and Saurabh Ray.
\newblock New existence proofs $\epsilon$-nets.
\newblock In {\em Proceedings of the twenty-fourth annual symposium on
  Computational geometry}, pages 199--207, 2008.

\bibitem{hittingtranslates}
C.~A. Rogers.
\newblock A note on coverings.
\newblock {\em Mathematika}, 4(1):1--6, 1957.

\bibitem{RubinWeakNets}
Natan Rubin.
\newblock An improved bound for weak epsilon-nets in the plane.
\newblock In {\em 2018 IEEE 59th Annual Symposium on Foundations of Computer
  Science (FOCS)}, pages 224--235, 2018.

\bibitem{RubinHigDim}
Natan Rubin.
\newblock Stronger bounds for weak epsilon-nets in higher dimensions.
\newblock {\em Proceedings of the 53rd Annual ACM SIGACT Symposium on Theory of
  Computing}, 2021.

\bibitem{combdistprojective}
Endre Szemer{\'e}di and William~T. Trotter.
\newblock A combinatorial distinction between the euclidean and projective
  planes.
\newblock {\em Eur. J. Comb.}, 4:385--394, 1983.

\bibitem{exponentiallowerbound}
Istv{\'a}n Talata.
\newblock Exponential lower bound for the translative kissing numbers of
  d-dimensional convex bodies.
\newblock {\em Discrete \& Computational Geometry}, 19(3):447--455, 1998.

\bibitem{packandcover}
G{\'{a}}bor~Fejes T{\'{o}}th.
\newblock Packing and covering.
\newblock In Jacob~E. Goodman and Joseph O'Rourke, editors, {\em Handbook of
  Discrete and Computational Geometry, Second Edition}, pages 25--52. Chapman
  and Hall/CRC, 2004.

\bibitem{Vitali}
G.~Vitali.
\newblock Sui gruppi di punti e sulle funzioni di variabili reali.
\newblock {\em Torino Atti}, 43:229--246, 1908.

\end{thebibliography}

\end{document}